\documentclass{amsart}
\usepackage{amsmath,amssymb,amsfonts}
\usepackage{graphicx,xcolor}
\usepackage{epstopdf}
\usepackage[colorlinks=true]{hyperref}
\hypersetup{urlcolor=blue, citecolor=red}
\usepackage{bbm}

\textheight=8.2 true in
 \textwidth=5.0 true in
  \topmargin 30pt
   \setcounter{page}{1}


\def\quotient#1#2{%
    \lower 0.2ex\hbox{$#1$}\big\backslash \raise0.2ex\hbox{$#2$}%
    }
\newcommand{\hl}{\sl}
\newcommand{\hd}{\sl}
\newcommand{\ba}{\begin{array}}
\newcommand{\ea}{\end{array}}

\newcommand{\LL}{\mbox{\rm L}}
\newcommand{\Cnt}{\mbox{\rm C}}

\renewcommand{\d}{\mathrm{d}}
\newcommand{\e}{\mathrm{e}}

\newcommand{\SX}{{\mathcal G}}
\newcommand{\reg}{{\mathcal R}}
\newcommand{\zero}{{\mathcal Z}}

\newcommand{\vol}{\mbox{\rm vol}}
\newcommand{\bs}{{\mathcal B}}
\newcommand{\Gr}{{Gr}}
\newcommand{\Cr}{{CR}}

\newcommand{\NN}{\mathbb{N}} \newcommand{\ZZ}{\mathbb{Z}}
 \newcommand{\RR}{\mathbb{R}}

\newcommand{\XX}{X}
\newcommand{\ganz}{\overline{\XX}}
\newcommand{\rand}{\partial\XX}

\newcommand{\Lim}{L_\Gamma}
\newcommand{\radlim}{L_\Gamma^{\small{\mathrm{rad}}}}

\newcommand{\at}{\!\cdot\!}
\newcommand{\width}{\mathrm{width}}
\newcommand{\diam}{\mathrm{diam}}
\newcommand{\ndpt}{\partial_{\infty}}
\newcommand{\Hopf}{H}
\newcommand{\xo}{{o}}

\newcommand{\be}{\begin{eqnarray*}}
\newcommand{\ee}{\end{eqnarray*}}
\newcommand{\ein}{\,\rule[-5pt]{0.4pt}{12pt}\,{}}

\newcommand{\is}{\mbox{Is}}

\newcommand{\supp}{\mbox{supp}}

\newcommand{\st}{\mbox{such}\ \mbox{that}\ }


\newtheorem{theorem}{Theorem}[section]
\newtheorem{corollary}{Corollary}
\newtheorem*{mainA}{Theorem A}
\newtheorem*{mainB}{Theorem B}
\newtheorem*{mainC}{Theorem C}
\newtheorem{lemma}[theorem]{Lemma}
\newtheorem{proposition}{Proposition}

\theoremstyle{definition}
\newtheorem{definition}[theorem]{Definition}
\newtheorem{remark}{Remark}

\title[Equidistribution and counting of orbit points for rank one groups]
      {Equidistribution and counting of orbit points \\ for discrete rank one isometry groups of Hadamard spaces}

\author[Gabriele Link]{}
 
\subjclass{Primary: 22D40, 20F69; Secondary: 37D40, 20F67, 37D25.}
 \keywords{rank one space, Bowen-Margulis measure, mixing, equidistribution, orbit counting function.}

 \email{gabriele.link@kit.edu}
 


\begin{document}
\bibliographystyle{AIMS.bst}
\maketitle

\centerline{\scshape Gabriele Link$^*$}
\medskip
{\footnotesize
 \centerline{Institut f\"ur Algebra und Geometrie}
   \centerline{Karlsruhe Institute of Technology (KIT)}
   \centerline{Englerstr.~2, 76 131 Karlsruhe, Germany}
} 

\bigskip

\begin{abstract}
Let $\XX$ be a proper, geodesically complete Hadamard space,  and $\ \Gamma<\is(\XX)$ a discrete subgroup of isometries of $\XX$ with the fixed point of a rank one isometry of $\XX$ in its infinite limit set. 
In this paper we prove that if $\Gamma$ has non-arithmetic length spectrum, then 
the Ricks'  Bowen-Margulis measure 
-- which generalizes the well-known Bowen-Margulis measure in the CAT$(-1)$ setting -- is mixing. If in addition the Ricks' Bowen-Margulis measure  
is finite, then we also have equidistribution of $\Gamma$-orbit points in $\XX$, which in particular yields an asymptotic estimate for the orbit counting function of $\Gamma$. This generalizes well-known facts for non-elementary discrete isometry groups of  Hadamard manifolds with pinched negative curvature and proper CAT$(-1)$-spaces. 

%
\end{abstract}

\section{Introduction}
Let $(\XX,d)$ be a  proper  Hadamard space, $x$, $y\in\XX$ and $\Gamma<\is(\XX)$ a discrete group.
The {\hl Poincar\'e series} of $\Gamma$ with respect to $x$ and $y$  is defined by
\[ P(s;x,y):=\sum_{\gamma\in\Gamma} \e^{-sd(x,\gamma y)};\] 
its exponent of convergence 
\begin{equation}\label{critexpdef}
\delta_\Gamma :=\inf\{s>0\colon \sum_{\gamma\in\Gamma} \e^{-sd(x,\gamma y)}\ \text{converges}\};\end{equation}
is called the {\hl critical exponent} of $\Gamma$. By the 
 triangle inequality the critical exponent is independent of $x,y\in\XX$. 
 We will require that the critical exponent $\delta_\Gamma$ is {\hl finite}, which  is not a severe restriction as it is always the case when $\XX$ admits a compact quotient oder when $\Gamma$ is finitely generated. 

 Obviously $P(s;x,y)$ converges for
   $s>\delta_\Gamma $ and  diverges for $s<\delta_\Gamma $. The group  $\Gamma$ is said to be {\hl divergent}, if $P(\delta_\Gamma;x,y)\, $ 
 diverges, and   {\hl convergent} otherwise.

Since $\XX$ is proper, the {\hl orbit counting function} with respect to $x$ and $y$ 
\begin{equation}\label{orbitcountdef}
 N_\Gamma:[0,\infty)\to [0,\infty),\quad R\mapsto \#\{\gamma\in\Gamma\colon d(x,\gamma y)\leq R\}\end{equation}
satisfies $N_\Gamma(R)<\infty$ for all $R>0$; moreover, it is related to the critical exponent via the formula
\[ \delta_\Gamma =\limsup_{R\to +\infty}\frac{\ln\bigl(N_\Gamma(R)\bigr)}{R}.\]
%

One goal of this article is to give a precise asymptotic estimate for the orbit counting function for a discrete  {\hl rank one group}  $\Gamma$ as in \cite{LinkHTS} (that is a group with the fixed point of a so-called rank one isometry of $\XX$ in its infinite limit set); 
for precise definitions we refer the reader to 
Section~\ref{rankonegroups}. 
Such a rank one group always contains a non-abelian free subgroup generated by two independent rank one elements, hence its critical exponent $\delta_\Gamma$ is strictly positive.
Notice that 
our assumption on $\Gamma$ obviously imposes severe restrictions on the Hadamard space $\XX$ itself: It can neither be a higher rank symmetric space, a higher rank Euclidean building nor a product of Hadamard spaces. 
 
 Using the Poincar{\'e} series from above, a remarkable $\Gamma$-equivariant family of measures $(\mu_x)_{x\in X}$ supported on the geometric boundary $\rand$ of $\XX$ -- a so-called 
conformal density --  can be constructed in our very general setting (see \cite{MR0450547} and  \cite{MR556586} for the original constructions in hyperbolic $n$-space).

Let $\SX$ denote the  set of parametrized geodesic lines in $\XX$ endowed with the compact-open topology (which can be identified with the unit tangent  bundle $S\XX$ if $\XX$ is a Riemannian manifold) and consider the action of $\RR$ on $\SX$ by reparametrization. This action induces a flow $g_\Gamma$ on the quotient space $\quotient{\Gamma}{\SX}$.  If $\XX$ is {\hl geodesically complete}, then thanks to the construction due to R.~Ricks (\cite[Section~7]{Ricks}) -- which uses the conformal density $(\mu_x)_{x\in\XX}$ described above -- we obtain  a $g_\Gamma$-invariant Radon measure $m_\Gamma$ on $\quotient{\Gamma}{\SX}$. This possibly infinite measure will be called the {\hl Ricks' Bowen-Margulis} measure, since it generalizes the classical Bowen-Margulis measure in the CAT$(-1)$-setting.  

If $\Gamma$ is divergent, then according to Theorem~10.2 in \cite{LinkHTS} the dynamical system $( \quotient{\Gamma}{\SX}, g_\Gamma, m_\Gamma)$ is conservative and ergodic.  We also want to mention here that if $\XX$  is a Hadamard {\hl manifold}, then  Ricks' Bowen-Margulis measure $m_\Gamma$ is equal to  Knieper's  measure first introduced in Section~2 of \cite{MR1652924} for cocompact groups $\Gamma$ (and which was used  in \cite{LinkPicaud} for arbitrary rank one groups). 
In the cocompact case  
Knieper's work  further implies that the Ricks' Bowen-Margulis measure is the unique measure of maximal entropy on the unit tangent bundle of the compact quotient $\quotient{\Gamma}{\XX}$ (see again Section~2 in \cite{MR1652924}).

In this article  we will first address the question under which 
hypotheses the dynamical system $( \quotient{\Gamma}{\SX}, g_\Gamma, m_\Gamma)$ is  mixing.  We remark that in our very general setting we cannot hope to get mixing without further restrictions on the group $\Gamma$: 
F.~Dal'Bo (\cite[Theorem A]{MR1779902}) showed  
that even in the special case of a CAT$(-1)$-Hadamard {\hl manifold} $\XX$, the dynamical system  $( \quotient{\Gamma}{S\XX}, g_\Gamma, m_\Gamma)$ with the classical Bowen-Margulis measure $m_\Gamma$ is {\hl not} mixing, if the length spectrum of $\Gamma$ is arithmetic (that is if the set of lengths of closed geodesics in $\quotient{\Gamma}{\XX}$ is a discrete subgroup of $\RR$). However, we obtain the best possible result:
\begin{mainA}\  Let $X$ be a proper, geodesically complete Hadamard space and\break $\Gamma<\is(X)$ a discrete, divergent rank one group. 
Then with respect to Ricks'  Bowen-Margulis measure the geodesic flow on $\quotient{\Gamma}{\SX}$ is mixing or the length spectrum of $\Gamma$ is arithmetic. \end{mainA}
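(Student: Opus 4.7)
The plan is to adapt M.~Babillot's celebrated mixing dichotomy (originally established for the Bowen-Margulis measure on CAT$(-1)$-Hadamard manifolds) to our rank one Hadamard setting, using R.~Ricks' local product description of $m_\Gamma$ in place of the classical Hopf coordinates. By Theorem~10.2 of \cite{LinkHTS} the flow $g_\Gamma$ is ergodic and conservative with respect to~$m_\Gamma$. If it failed to be mixing, then following Babillot's extraction procedure for conservative ergodic flows one can produce, from a non-trivial weak-$*$ limit of time translates of a test function, a measurable eigenfunction $\phi\colon\quotient{\Gamma}{\SX}\to S^1$ satisfying $\phi\circ g_t=\e^{i\alpha t}\phi$ $m_\Gamma$-almost everywhere, for some $\alpha\in\RR\setminus\{0\}$. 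Lifting $\phi$ to a $\Gamma$-invariant measurable $\tilde\phi\colon\SX\to S^1$ obeying the same transformation rule reduces the problem to proving that $\alpha\,\ell(\gamma)\in 2\pi\ZZ$ for every translation length $\ell(\gamma)$ of a rank one element $\gamma\in\Gamma$.

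On the regular subset $\SX_{\mathrm{reg}}\subset\SX$ of rank one geodesics not bounding a flat strip, Ricks' construction presents $m_\Gamma$ locally as a product in Hopf-type coordinates $\SX_{\mathrm{reg}}\cong\rand^{(2)}_{\mathrm{reg}}\times\RR$: up to a Gromov-type density factor it becomes the product of a measure on the boundary pair space with Lebesgue measure on the flow factor. Writing $\tilde\phi(\xi,\eta,t)=\e^{i\alpha t}\psi(\xi,\eta)$ for a measurable $\psi$ and running Hopf/Birkhoff averages along the strong stable and strong unstable foliations -- whose transverse measures are absolutely continuous with respect to the conformal densities $(\mu_x)$ by the rank one Shadow Lemma of \cite{LinkHTS} -- then forces $\psi$ to be essentially constant along each stable and each unstable leaf. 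Combined via the usual holonomy trick this shows that $\psi$ is essentially constant on local product boxes in $\SX_{\mathrm{reg}}$.

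Combining this local constancy with the $\Gamma$-invariance of $\tilde\phi$ and applying it along the axis of a rank one element $\gamma\in\Gamma$ -- whose two endpoints lie in $\Lim$ and whose flow orbit closes after time $\ell(\gamma)$ -- immediately yields $\e^{i\alpha\ell(\gamma)}=1$. Since by the closing-lemma arguments already employed in \cite{LinkHTS} such translation lengths are dense in the full length spectrum of $\Gamma$, the length spectrum is contained in the discrete subgroup $(2\pi/\alpha)\ZZ$ of $\RR$ and hence arithmetic. The main obstacle is the middle paragraph: in the Hadamard (non-manifold) setting geodesics may branch or bound flat strips, so the strong (un)stable foliations are not globally defined and the Hopf parametrization holds only $m_\Gamma$-almost everywhere; the rank one assumption, which supplies a full-measure set of boundary pairs joined by a unique non-branching geodesic, is precisely what keeps Ricks' product structure -- and with it the ergodic averaging step -- usable on a set of full $m_\Gamma$-measure.
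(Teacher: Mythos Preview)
Your proposal has the right overall shape---Babillot's strategy combined with Ricks' product structure---but there is a genuine gap at the very first step. Failure of mixing for an ergodic conservative flow does \emph{not} in general produce a non-trivial $L^2$ eigenfunction $\phi\circ g_t=\e^{i\alpha t}\phi$; that conclusion would follow only from failure of \emph{weak} mixing, and weakly mixing but non-mixing flows (which have no non-constant eigenfunctions) certainly exist. Babillot's extraction (her Lemma~1) supplies something weaker: a non-constant $\Psi\in L^2(m_\Gamma)$ and a sequence $s_n\nearrow\infty$ with $f\circ g_\Gamma^{s_n}\to\Psi$ \emph{and} $f\circ g_\Gamma^{-s_n}\to\Psi$ weakly. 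This $\Psi$ carries no a~priori eigen-structure, so your subsequent steps, which all rest on manipulating an eigenfunction, are unsupported. Even granting an eigenfunction, evaluating a merely measurable $\phi$ on the axis of a single rank one isometry---a set of $m_\Gamma$-measure zero---would need separate justification.

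The paper works directly with $\Psi$. After smoothing along the flow to $\widetilde\Psi_\varepsilon$, one studies the group of periods of $t\mapsto\widetilde\Psi_\varepsilon(g^t v)$; ergodicity of $\overline\mu$ forces this to be a fixed closed subgroup $2a\ZZ$ for $\overline\mu$-a.e.\ $(v^-,v^+)$, and non-constancy of $\Psi_\varepsilon$ rules out the case $a=0$. The stable/unstable constancy you gesture at is obtained not by Hopf averaging on foliations (poorly behaved in CAT$(0)$) but by realising $\Psi_\varepsilon$ as an almost-sure Cesaro limit in \emph{both} time directions and invoking Knieper's asymptotic lemma (Lemma~7.1 of \cite{LinkHTS}) on the full-measure zero-width recurrent set $\zero_\Gamma^{\small{\mathrm{rec}}}$. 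One then runs a quadrilateral argument with Ricks' cross-ratio: hopping around $(\xi,\xi',\eta,\eta')\in\mathcal{Q}_\Gamma$ via alternating stable/unstable moves returns to the same flow line shifted by $t=2\,\Cr(\xi,\xi',\eta,\eta')$, forcing $t\in 2a\ZZ$. Hence $\Cr(\mathcal{Q}_\Gamma)\subset a\ZZ$, and since every translation length is a cross-ratio (Lemma~10.6 of \cite{Ricks}), the length spectrum is arithmetic. No density or closing-lemma input is needed.
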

Notice that  in the CAT$(0)$-setting Theorem~A was already proved by M.~Babillot (\cite[Theorem~2]{MR1910932}) in the special case when $\XX$ is a {\hl manifold} and $\Gamma<\is(\XX)$ is cocompact; moreover, in this case the second alternative cannot occur, that is the length spectrum of $\Gamma$  {\hl cannot} be arithmetic. It was  then  generalized by R.~Ricks (\cite[Theorem~4]{Ricks}) to non-Riemannian proper Hadamard spaces  $\XX$ and discrete rank one groups $\Gamma<\is(\XX)$ with {\hl finite} Ricks' Bowen-Margulis measure. Under the additional hypothesis  that  the limit set of $\Gamma$ is equal to the whole geometric boundary $\rand$ of $\XX$, Ricks also proved that the length spectrum  of $\Gamma$ can only be arithmetic if $\XX$ is isometric to a tree with all edge lengths in $c\NN$ for some $c>0$. Here we allow both infinite Ricks' Bowen-Margulis measure and limit sets that are proper subsets of $\rand$. 

Let us mention that the restriction to divergent groups is quite reasonable: If the measure $m_\Gamma$ is infinite, then the mixing property of $( \quotient{\Gamma}{\SX}, g_\Gamma, m_\Gamma)$ only states that for all Borel sets $A$, $B\subset \quotient{\Gamma}{\SX}$ with $m_\Gamma(A)$, $m_\Gamma(B)$ finite we have
\[ \lim_{t\to\pm\infty} m_\Gamma (A\cap g_\Gamma^{t}B) =0.\]
This condition is very weak and obviously neither implies conservativity nor ergodicity. Actually it is easily seen to hold true 
when $(\quotient{\Gamma}{ \SX}, g_\Gamma, m_\Gamma)$ is dissipative
, which -- according to Theorem~10.2 in \cite{LinkHTS} -- is equivalent to the fact that $\Gamma$ is convergent.

In the second part of the article we use the mixing property in the case of finite Bowen-Margulis measure 
to deduce an equidistribution result for $\Gamma$-orbit points in the vein of Roblin's results for CAT$(-1)$-spaces (\cite[Th\'eor\`eme 4.1.1]{MR2057305}):

 \begin{mainB} \ Let $X$ be a proper, geodesically complete  Hadamard space and\break $\ \Gamma<\is(X)$ a discrete rank one group with non-arithmetic length spectrum and finite Ricks' Bowen-Margulis measure $m_\Gamma$. 
 
  Let $f$ be a continuous function from $\ganz\times \ganz$ to $\RR$, and  $x$, $y\in\XX$. Then  
\[ \lim_{T\to\infty}   \Bigl( 
\delta_\Gamma\cdot  \e^{-\delta_\Gamma T} \sum_{\begin{smallmatrix}{\scriptstyle\gamma\in\Gamma}\\{\scriptstyle d(x,\gamma y )\le T}\end{smallmatrix}} f(\gamma y,\gamma^{-1} x)\Bigr)=\frac1{\Vert m_\Gamma \Vert}  \int_{\rand\times\rand} f(\xi,\eta)\d \mu_x(\xi) \d \mu_y(\eta).\]
\end{mainB}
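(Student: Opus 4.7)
The plan is to follow Roblin's strategy from \cite{MR2057305} with the modifications required to accommodate the general Hadamard setting. As a preliminary observation, since $m_\Gamma$ is finite and $g_\Gamma$-invariant on $\quotient{\Gamma}{\SX}$, the dynamical system is conservative, which by Theorem~10.2 of \cite{LinkHTS} forces $\Gamma$ to be divergent. Theorem~A then supplies the mixing of $g_\Gamma$ with respect to $m_\Gamma$. By the Stone--Weierstrass theorem applied to the compact space $\ganz\times\ganz$, it suffices to prove the limit when $f(\xi,\eta)=\phi(\xi)\psi(\eta)$ is a product of continuous functions $\phi,\psi\colon\ganz\to\RR$.

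The core of the argument is a correspondence between $\Gamma$-orbit points and intersections of flow boxes. Using the Hopf-type parametrization of $\SX$ by ordered pairs of distinct endpoints in $\rand$ together with a real time parameter, the Ricks construction describes $m_\Gamma$ on the rank-one part of $\SX$ as (a $\Gamma$-averaged version of) the product of an exponential factor in the Gromov-type product with $\d\mu_x\otimes\d\mu_x\otimes\d t$. Fixing two small open sets $U,V\subset\ganz$ that meet the radial rank-one limit set, together with a height parameter $R>0$, I build a flow box $B=B_{U,V,R}\subset\SX$ consisting of parametrized lines with forward endpoint in $U$, backward endpoint in $V$, and basepoint in a prescribed neighborhood of $x$. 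For $T$ large, an element $\gamma\in\Gamma$ with $d(x,\gamma y)\in[T-R,T]$ such that $\gamma y$ visually projects into $U$ from $x$ and $\gamma^{-1}x$ into $V$ from $y$ corresponds bijectively to an intersection in $\quotient{\Gamma}{\SX}$ of (the image of) $B$ with $g_\Gamma^{-T}B'$, for an analogous box $B'$ anchored near $y$.

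Applying the mixing property of Theorem~A to the indicator functions of $B$ and $B'$ gives $m_\Gamma(B\cap g_\Gamma^{-T}B')\to m_\Gamma(B)\,m_\Gamma(B')/\|m_\Gamma\|$ as $T\to\infty$. The product description of $m_\Gamma$ together with the conformality of the family $(\mu_x)$, which relates $\mu_x$ and $\mu_y$ through an exponential of a Busemann cocycle, turns each side into a quantity proportional to $\mu_x(U)\mu_y(V)$, with an exponential factor $\e^{\delta_\Gamma T}$ that exactly cancels the $\e^{-\delta_\Gamma T}$ in the theorem's statement. Integrating the resulting pointwise count in the time parameter over the interval of length $R$ produces the extra factor $\delta_\Gamma$ on the left-hand side; approximating $\phi(\gamma y)\psi(\gamma^{-1}x)$ by $\phi(\xi)\psi(\eta)$ with $(\xi,\eta)\in U\times V$ using continuity of $\phi,\psi$, and then letting $\diam(U)$, $\diam(V)$ and finally $R$ tend to zero, assembles the stated integral over $\rand\times\rand$ with the correct constant $1/\|m_\Gamma\|$.

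The principal obstacle, not present in the CAT$(-1)$ setting, is that geodesics need no longer be uniquely determined by their endpoints and the standard shadow and Gromov-product estimates hold only on the rank-one part of the boundary. One must therefore verify that the Patterson--Sullivan product $\mu_x\otimes\mu_y$ charges only pairs of radial rank-one limit points, so that the flow-box parametrization is unambiguous off a null set, and control the remainder terms in the approximation using the rank-one contraction properties established in \cite{LinkHTS}. A second delicate point is the uniformity in the mixing: one needs to apply mixing along a continuous family of boxes (or to sufficiently regular test functions replacing the indicators), which requires regularity of the map $(\xi,\eta)\mapsto m_\Gamma(B_{U,V,R})$ at the boundary, again coming from the rank-one shadow lemma. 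Once these measurability and uniformity statements are in place, the Roblin-type argument proceeds in close analogy with the CAT$(-1)$ case.
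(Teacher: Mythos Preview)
Your overall strategy---Roblin's flow-box method driven by mixing---matches the paper's, and your identification of the two CAT$(0)$-specific difficulties (non-uniqueness of geodesics, shadow estimates only on the rank-one part) is accurate. But two points in your plan are not correct as stated, and one structural step is missing.

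First, there is no bijection between orbit points $\gamma$ with $d(x,\gamma y)\in[T-R,T]$ and intersections of flow boxes. What Roblin's method actually produces is a pair of one-sided inclusions: the paper introduces corridors ${\mathcal L}_r(x,\gamma y)$ and small/large cones ${\mathcal C}^\pm_r(x,A)$, and proves (Lemmas~\ref{largecones} and~\ref{smallcones}) that membership in a corridor forces $(\gamma y,\gamma^{-1}x)$ into a large-cone product, while small-cone membership forces a shadow product into the corridor. These inequalities, combined with an integration of $\e^{\delta_\Gamma t}\sum_\gamma m(K^+\cap g^{-t}\gamma K^-)$ over $t\in[T_0,T\pm 3r]$ (Lemma~\ref{flowintegrals}), give matching upper and lower bounds on $\nu_{x,y}^T$ evaluated on cone products. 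The factor $\delta_\Gamma$ does not come from ``integrating over an interval of length $R$'' but from $\int^T \e^{\delta_\Gamma t}\,\d t\sim \e^{\delta_\Gamma T}/\delta_\Gamma$; correspondingly one does not let $R\to 0$ at the end---the parameter $r$ is fixed small enough that all error factors are $\e^{\pm\varepsilon}$, and the $r$-dependence cancels through the common factor $M=r^2\mu_x({\mathcal O}_r(\xi_0,x))\mu_y({\mathcal O}_r(\eta_0,y))$.

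Second, the argument in the paper has a two-step structure that your plan omits. The flow-box estimate (Proposition~\ref{firststep}) requires $x$ and $y$ to have trivial stabilizer and to lie on zero-width $\Gamma$-recurrent geodesics through the chosen boundary points $\xi_0,\eta_0$; this is what makes the shadow-measure continuity (Corollary~\ref{measureofshadowsisclose}) and the injectivity of $K^\pm$ into the quotient available. For arbitrary $x,y$ one must then transfer the estimate via a change-of-basepoint argument (Proposition~\ref{secondstep}) using conformality of $\mu$ and the Busemann cocycle, together with the fact that only finitely many $\gamma$ fall outside the relevant neighborhoods (Lemma~\ref{orbitpointsincones}). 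Without this step your plan does not cover general $x,y$. Your Stone--Weierstrass reduction to products is a legitimate alternative to the paper's partition-of-unity globalization, but it does not remove the need for either of the two issues above.
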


Finally, from the equidistribution result Theorem~B and its proof we get the following asymptotic estimates for the orbit counting function introduced in (\ref{orbitcountdef}):
   \begin{mainC}\  Let $X$ be a proper, geodesically complete Hadamard space, $x$, $y\in\XX$ and $\Gamma<\is(X)$ a  discrete rank one group. 
  \begin{itemize}
   \item[(a)] If  $\Gamma$ is divergent with non-arithmetic length spectrum and finite Ricks' Bowen-Margulis measure $m_\Gamma$, then
    \[ \lim_{R\to\infty} \delta_\Gamma\cdot \e^{-\delta_\Gamma R} \#\{\gamma\in\Gamma\colon d(x,\gamma y)\leq R\} = \mu_x(\rand)\mu_y(\rand)/ \Vert m_\Gamma\Vert .\]
  \item[(b)] If  $\Gamma$ is divergent with non-arithmetic length spectrum and infinite Ricks' Bowen-Margulis measure, then
  \[ \displaystyle \lim_{R\to\infty} \e^{-\delta_\Gamma R} \#\{\gamma\in\Gamma\colon d(x,\gamma y)\leq R\} =0.\]
    \item[(c)] If $\Gamma$ is convergent, then $\quad \displaystyle \lim_{R\to\infty} \e^{-\delta_\Gamma R} \#\{\gamma\in\Gamma\colon d(x,\gamma y)\leq R\}=0$. 
\end{itemize}
\end{mainC}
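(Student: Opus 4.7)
The strategy is to reduce each of the three parts to Theorem B or to the machinery underlying its proof, so that only part (b) requires genuinely new work.

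\textbf{Part (a)} is immediate: applying Theorem B to the constant function $f\equiv 1$ --- which is continuous on the compact space $\ganz\times\ganz$ --- the left-hand sum collapses to $N_\Gamma(T)$ while the right-hand integral equals $\mu_x(\rand)\mu_y(\rand)$, giving exactly the claimed asymptotic.

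\textbf{Part (c)} is a Tauberian observation. Convergence of $\Gamma$ and Fubini yield
\[ \infty > \sum_{\gamma\in\Gamma}\e^{-\delta_\Gamma d(x,\gamma y)} \;=\; \delta_\Gamma \int_0^\infty \e^{-\delta_\Gamma t} N_\Gamma(t)\,\mathrm{d}t. \]
If $\e^{-\delta_\Gamma R_n} N_\Gamma(R_n)\ge\alpha>0$ along some sequence $R_n\to\infty$, then monotonicity of $N_\Gamma$ forces $N_\Gamma(t)\ge \alpha \e^{\delta_\Gamma R_n}$ throughout $[R_n,R_n+1]$, so each such interval contributes at least $\alpha(1-\e^{-\delta_\Gamma})/\delta_\Gamma$ to the integral --- contradicting finiteness after passing to a subsequence with disjoint unit intervals. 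Hence $\e^{-\delta_\Gamma R}N_\Gamma(R)\to 0$.

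\textbf{Part (b)} is the real obstacle, since Theorem B requires $\Vert m_\Gamma\Vert<\infty$. My plan is to re-run the proof of Theorem B: that argument reduces the weighted orbit count to an application of the mixing of $g_\Gamma$ via a \emph{Hopf box} construction --- to each $\gamma\in\Gamma$ one associates a small box of geodesics in $\SX$ whose forward and backward endpoints lie near $\gamma y$ and $\gamma^{-1}x$, and the number of $\gamma$'s with $d(x,\gamma y)\le T$ and $(\gamma y,\gamma^{-1}x)$ in a prescribed open set $U\times V\subset\ganz\times\ganz$ is controlled by an intersection $m_\Gamma(A\cap g_\Gamma^T B)$ for finite $m_\Gamma$-measure sets $A$, $B$ built from $U$, $V$. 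In the finite-mass case, mixing supplies the precise constant $m_\Gamma(A)m_\Gamma(B)/\Vert m_\Gamma\Vert$; in the infinite-mass case, Theorem A only provides the weaker conclusion $m_\Gamma(A\cap g_\Gamma^T B)\to 0$, but that already yields
\[ \lim_{T\to\infty} \delta_\Gamma \e^{-\delta_\Gamma T} \#\{\gamma\in\Gamma:d(x,\gamma y)\le T,\ \gamma y\in U,\ \gamma^{-1}x\in V\} \;=\; 0 \]
for each such product set. The hard step is the passage from local to global: exploiting compactness of $\ganz$, one covers $\ganz\times\ganz$ by finitely many product open sets and combines the local bounds via a partition of unity, mirroring the summation step of Theorem B. Uniformity here is more delicate than in the finite-mass setting because there is no normalizing denominator $\Vert m_\Gamma\Vert$ available to absorb error terms, so I expect this uniformity estimate --- essentially, checking that no mass of the orbit-counting distribution escapes to the diagonal of $\ganz\times\ganz$ --- to be the main technical burden.
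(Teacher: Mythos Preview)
Your proposal is correct. Parts (a) and (b) follow exactly the paper's route: (a) is Theorem~B applied to $f\equiv 1$, and (b) re-runs the proof of Theorem~B with the infinite-mass mixing conclusion $m_\Gamma(A\cap g_\Gamma^{-t}B)\to 0$ in place of the finite-mass limit. Your anxiety about the globalisation in (b) is misplaced: once the local estimate $\limsup_{T\to\infty}\nu_{x,y}^T\bigl({\mathcal C}^-_r(x,A)\times{\mathcal C}^-_r(y,B)\bigr)=0$ is established for every $(\xi_0,\eta_0)\in\rand\times\rand$ --- diagonal points included --- a finite cover of the compact space $\ganz\times\ganz$ and a subordinate partition of unity give the global result by summing finitely many zeros. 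No uniformity beyond finiteness of the cover is needed, and nothing can ``escape to the diagonal'' since diagonal neighbourhoods are handled identically to all others; this step is in fact \emph{easier} than in the finite-mass case because you only need an upper bound.

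Your treatment of (c) genuinely differs from the paper's. The paper folds (c) into the same geometric argument as (b): for a convergent group the system $(\quotient{\Gamma}{\SX},g_\Gamma,m_\Gamma)$ is dissipative by the Hopf--Tsuji--Sullivan dichotomy, and dissipativity already forces $\sum_{\gamma\in\Gamma} m(K^+\cap g^{-t}\gamma K^-)\to 0$, so the local-to-global machinery applies verbatim. Your Tauberian shortcut --- convergence of $\int_0^\infty \e^{-\delta_\Gamma t}N_\Gamma(t)\,\d t$ together with monotonicity of $N_\Gamma$ forces $\e^{-\delta_\Gamma t}N_\Gamma(t)\to 0$ --- is correct and substantially more elementary: it uses no conformal density, no Ricks' measure, and no dynamics whatsoever. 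The paper's route buys a unified proof for (b) and (c); yours isolates (c) with a one-line classical argument that sidesteps all questions about how the Patterson--Sullivan machinery behaves in the convergent regime.
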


Notice that in work in progress with Jean-Claude Picaud we apply the  equidistribution result Theorem~B above  to get  asymptotic estimates for the number of closed geodesics modulo free homotopy in $\quotient{\Gamma}{\XX}$ which are much more general and much more precise than the ones given in \cite{MR2290453}.  

The paper is organized as follows: Section~\ref{prelim}  fixes some notation and recalls basic facts concerning Hadamard spaces and rank one geodesics. 
In Section~\ref{rankonegroups} we introduce the notions of rank one isometry and $\is(\XX)$-recurrence and state some important facts. We also recall the definition of a rank one group and give the   weakest condition which ensures that a discrete group $\Gamma<\is(\XX)$ is rank one. 
In Section~\ref{geodcurrentmeasures} we introduce the notion of geodesic current and describe Ricks' construction of a geodesic flow invariant measure associated to such a geodesic current first on the quotient $\quotient{\Gamma}{[\SX]}$ of parallel classes of parametrized geodesic lines and finally on the quotient $\quotient{\Gamma}{\SX}$ of parametrized geodesic lines. Moreover, we recall from \cite{LinkHTS} a few results about the corresponding dynamical systems. 
Section~\ref{Mixing} is devoted to the proof of Theorem~A, which follows  M.~Babillot's strategy (\cite[Section~2.2]{MR1910932}) and uses cross-ratios of quadrilaterals similar to the ones introduced by R.~Ricks (\cite[Section~10]{Ricks}). In Section~\ref{shadowconecorridor} we introduce the notions of shadows, cones and corridors and state some important properties that are needed in the proof of Theorem~B. 
Section~\ref{RicksBMestimates} gives estimates for the so-called Ricks' Bowen-Margulis measure, which is the Ricks' measure associated to the quasi-product geodesic current coming from a conformal density.  In Section~\ref{equidistribution} we prove Theorem~B, and Section~\ref{orbitcounting} finally deals with the orbit counting function and the proof of Theorem~C.

\section{Preliminaries on Hadamard spaces}\label{prelim} 

%
The purpose of this section is to introduce terminology and notation and to summarize basic results about Hadamard spaces. 
Most of the material can be found in \cite{MR1377265} and \cite{MR1744486} 
(see also \cite{MR656659} and \cite{MR823981}  in  the special case of Hadamard manifolds and \cite{Ricks} for more recent results). 

Let $(\XX,d)$ be a metric space. For $y\in \XX$ and $r>0$ we will denote
$B_y(r)\subset\XX$ the open ball of radius $r$ centered at $y\in\XX$.
A {\hd geodesic} is an isometric map $\sigma$ from a closed interval $I\subset\RR$ or $I=\RR$ to $\XX$. 
For more precision we use the term {\hd geodesic ray} if $I=[0,\infty)$ and  {\hd geodesic line} if $I=\RR$.

We will deal here with  {\hl Hadamard spaces} $(\XX,d)$, that is complete metric spaces in which for any two points $x,y\in\XX$ there exists a geodesic $\sigma_{x,y}$ joining $x$ to $y$ (that is a geodesic $\sigma=\sigma_{x,y}:[0,d(x,y)]\to \XX$ with $\sigma(0)=x$ and $\sigma(d(x,y))=y$) and in which all geodesic triangles satisfy the CAT$(0)$-inequality. This implies in particular that $\XX$ is simply connected and that the geodesic joining an arbitrary pair of points in $\XX$ is unique.   Notice  however that in the non-Riemannian setting completeness of $\XX$ does not imply that every geodesic  can be extended to a geodesic line, so $\XX$ need not be geodesically complete. The geometric boundary $\rand$ of
$\XX$ is the set of equivalence classes of asymptotic geodesic
rays endowed with the cone topology (see for example Chapter~II in \cite{MR1377265}).  We remark that for all $x\in\XX$ and all $
\xi\in\rand$  there exists a unique geodesic ray $\sigma_{x,\xi}$ with origin $x=\sigma_{x,\xi}(0)$ representing $\xi$.

Given two geodesics $\sigma_1:[0,T_1]\to\XX$, $\sigma_2:[0,T_2]\to\XX$ with $\sigma_1(0)=\sigma_2(0)=:x$ the {\hl Alexandrov angle} $\angle(\sigma_1,\sigma_2)$ is defined by
\[ \angle (\sigma_1,\sigma_2):=\lim_{t_1,t_2\to 0} \angle_{\overline{x}}\bigl(\overline{\sigma_1(t_1)},\overline{\sigma_2(t_2)}\bigr),\]
where the angle on the right-hand side denotes the angle of a comparison triangle  in the Euclidean plane of the triangle with vertices $\sigma_1(t_1)$, $x$ and $\sigma_2(t_2)$ (compare \cite[Proposition II.3.1]{MR1744486}). By definition, every Alexandrov angle has values in $[0,\pi]$. For $x\in\XX$, $y,z\in\ganz\setminus\{x\}$ the angle $\angle_x(y,z)$ is then defined by
\begin{equation}\label{Alexandrovangle}
\angle_x(y,z):=\angle (\sigma_{x,y},\sigma_{x,z}).
\end{equation}

From here on we will require that $\XX$ is proper; in this case the geometric boundary $\rand$ is compact and the space $\XX$ is a dense and open subset of the compact space $\ganz:=\XX\cup\rand$.
Moreover, the action of the isometry group $\is(\XX)$ on $\XX$ naturally extends to an action by homeomorphisms on the geometric boundary. 


If $x, y\in \XX$, $\xi\in\rand$ and $\sigma$ is a geodesic ray in the
class of $\xi$, we set 
\begin{equation}\label{buseman}
 \bs_{\xi}(x, y)\,:= \lim_{s\to\infty}\big(
d(x,\sigma(s))-d(y,\sigma(s))\big).
\end{equation}
This number exists, is independent of the chosen ray $\sigma$, and the
function
\[ \bs_{\xi}(\cdot , y):
 \XX \to  \RR,\quad
x \mapsto \bs_{\xi}(x, y)\]
is called the {\hl Busemann function} centered at $\xi$ based at $y$ (see also Chapter~II in~\cite{MR1377265}). 
Obviously we have
\[ \bs_{g\cdot\xi}(g\at x,g\at y) = \bs_{\xi}(x, y)\quad\text{for all }\ x,y\in\XX\quad\text{and }\  g\in\is(\XX),\]
and the cocycle identity  
\begin{equation}\label{cocycle}
\bs_{\xi}(x, z)=\bs_{\xi}(x, y)+\bs_{\xi}(y,z)
\end{equation}
holds for all $x,y,z\in\XX$.

Since $\XX$ is non-Riemannian in general, we consider (as a substitute of the unit tangent bundle $S\XX$) the set  of parametrized geodesic lines in $\XX$ which we will denote $\SX$. We endow this set  with the distance function $d_1$ given by
\begin{equation}\label{metriconSX} d_1(u,v):=\sup \{ \e^{-|t|} d\bigl(v(t), u(t)\bigr) \colon t\in\RR\}\ \mbox{ for} \ u,v\in \SX ;\end{equation}
this distance function induces the compact-open topology, and every isometry of $\XX$ naturally extends to an isometry of the metric space $(\SX,d_1)$.


Moreover, there is a natural map $p:\SX\to\XX$ defined as follows: To a geodesic line  $v:\RR\to \XX$ in $\SX$ we assign its origin 
$pv:=v(0)\in\XX$. 
Notice that $p$ is proper, $1$-Lipschitz and $\is(\XX)$-equivariant; if $\XX$ is geodesically complete, then $p$ is surjective. 

For a geodesic line $v\in \SX$ we denote its extremities $v^-:=v(-\infty)\in\rand$ and  $v^+:=v(+\infty)\in\rand$ the negative and positive end point of $v$; in particular, we can define the end point map
\[ \ndpt:\SX\to \rand\times\rand,\quad v\mapsto (v^-,v^+).\]
For $v\in\SX$ we define the parametrized geodesic $-v\in\SX$ by
\[ (-v)(t):=v(-t)\quad\text{for all}\quad t\in\RR.\]

We say that a point $\xi\in\rand$ can be joined to  $\eta\in\rand$  by a
geodesic $v\in \SX$ if   
$v^-=\xi$ and $v^+=\eta$. Obviously the  set of pairs $(\xi,\eta)\in\rand\times\rand$   \st $\xi$ and $\eta$ can be joined by a geodesic coincides with  $ \ndpt\SX $, the image of $\SX$ under the end point map $\ndpt$. It is well-known that if $\XX$ is 
CAT$(-1)$, 
then any pair of distinct boundary points $(\xi,\eta)$ belongs to $\ndpt\SX $, and  the geodesic joining $\xi$ to $\eta$ is unique up to reparametrization. In general however, 
the set $\ndpt\SX $ is much smaller compared to $\rand\times\rand$ minus the diagonal due to the possible existence of flat subspaces in $\XX$. 
For $(\xi,\eta)\in\ndpt\SX $ we denote by
\begin{equation}\label{joiningflat}
(\xi\eta):=p\bigl(\{ v\in \SX \colon v^-=\xi,\ v^+=\eta\}\bigr)=p\circ \ndpt^{-1}(\xi,\eta)
\end{equation}
the subset of points in $\XX$ which lie on a geodesic line  joining $\xi$ to  $\eta$. It is well-known 
that $(\xi\eta)=(\eta\xi)\subset \XX$ is a closed and convex  subset of $\XX$ which is   isometric to a  product $C_{(\xi\eta)}\times\RR$, where  $C_{(\xi\eta)}=C_{(\eta\xi)}$ is again a closed and convex set. 

For $x\in\XX$ and $(\xi,\eta)\in\ndpt\SX$ we denote  
\begin{equation}\label{orthogonalproj}
 v= v(x;\xi,\eta)\in \SX\end{equation}
  the unique  parametrized geodesic   line satisfying the conditions $v\in\ndpt^{-1}(\xi,\eta)$ and\break  $d\bigl(x, v(0)\bigr)=d\bigl(x,(\xi\eta)\bigr)$. Notice that 
its origin $pv=v(0)$ is precisely the orthogonal projection onto the closed and convex subset $C_{(\xi\eta)}$.
Obviously we have
\[v(x;\eta,\xi)=-v(x;\xi,\eta)\quad\text{and}\quad \gamma v( x;\xi,\eta)=v(\gamma x;\gamma \xi,\gamma \eta)\quad\text{for all}\ \ \gamma\in\is(\XX).\]

In order to describe the sets $(\xi\eta)$ and $C_{(\xi\eta)}$ more precisely and for later use 
 we introduce as in \cite[Definition 5.4]{Ricks} for $x\in\XX$ the so-called {\hd Hopf parametrization} map 
 \begin{equation}\label{HopfPar}
\Hopf_x: \SX\to \ndpt\SX \times \RR,\quad v\mapsto \bigl(v^-,v^+,\bs_{v^-}(v(0),x)\bigr)
\end{equation}
of $\SX$ with respect to $x$. We remark that compared to \cite[Definition 5.4]{Ricks} and (5) in \cite{LinkHTS}  we changed the sign in the last coordinate in order to make (\ref{geodflowHopf}) below consistent. 
It is immediate that for a CAT$(-1)$-space $\XX$ this map is a homeomorphism; in general  it is only continuous and surjective. 
Moreover, it depends on the point $x\in\XX$ as follows: If $y\in \XX$,
$v\in \SX$ and  $\Hopf_x(v)=(\xi,\eta,s)$, then 
\[ \Hopf_y(v)=\bigl(\xi,\eta,s+\bs_{\xi}(x,y)\bigr)\]
by the cocycle identity~(\ref{cocycle}) for the Busemann function 
(compare also \cite[Section~3]{MR1207579}).

The Hopf parametrization map allows to define an equivalence relation $\sim$ on $\SX$ as follows: If $u,v\in \SX$, then $u\sim v$ if and only if $\Hopf_x(u)=\Hopf_x(v)$. Notice that this definition does not depend on the choice of $x\in\XX$ and that every point $(\xi,\eta,s)\in\ndpt\SX\times \RR$ uniquely determines an equivalence class $[v]$ with $v\in\SX$. 
The {\hl width} of $v\in\SX$ is defined by 
\begin{equation}\label{widthdef}
 \width(v):= \sup\{ d\bigl(u(0),w(0)\bigr)\colon u,w\in [v]\}=\diam\left(C_{(v^-v^+)}\right).\end{equation}
Notice that if  $\XX$ is CAT$(-1)$ then for all $v\in\SX$ we have  $[v]=\{v\}$ and hence $\width(v)=0$; in general, if $v(\RR)$ is contained in an isometric image of a Euclidean plane, then 
the width of $v$ is infinite. 

This motivates the following definitions: A geodesic line $v\in \SX$  is called  
{\hl rank one} if its width is finite; it is said to have zero width if $\width(v)=0$.   In the sequel we will use as in \cite{Ricks} the notation 
\begin{align*}
\mathcal{R}&:=\{v\in \SX\colon v\ \text{is rank one}\}\quad\text{respectively}\\
\mathcal{Z}&:=\{v\in \SX\colon v\ \text{is rank one of zero width}\}.
\end{align*}
We remark that the existence of a rank one geodesic  imposes severe restrictions on the Hadamard space $\XX$. For example,  $\XX$ can neither be a symmetric space or  Euclidean building of higher rank  nor a product of Hadamard spaces.

The following important lemma states that even though we cannot join any two distinct points in the geometric boundary $\rand$ of the Hadamard space $\XX$ by a geodesic in $\XX$, given a rank one geodesic we can at least join all points in a neighborhood of its end points by a geodesic in $\XX$. More precisely,  we have the following result which is a reformulation of  Lemma~III.3.1 in \cite{MR1377265}:
\begin{lemma}[Ballmann]\label{joinrankone} \
Let $v\in\mathcal{R}$ be a rank one geodesic and $c>\width(v)$.
Then there exist open disjoint neighborhoods $U^-$ of $\,v^-$ and $U^+$ of $\,v^+$ in $\ganz$ with the following properties:
If $\xi\in U^-$ and $\eta \in U^+$ then there exists a rank one geodesic joining $\xi$ and $\eta$. For any such geodesic $w\in\mathcal{R}$ we have   $d(w(t), v(0))< c$ for some $t\in\RR$ and $\width(w)\le 2c$.
\end{lemma}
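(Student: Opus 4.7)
The plan is a classical limiting argument combining properness of $\XX$ (Arzel\`a--Ascoli), the rank one hypothesis on $v$, and the flat strip theorem. Everything reduces to one core claim: for some intermediate $b$ with $\width(v) < b < c$, there exist open neighborhoods $U^-$ of $v^-$ and $U^+$ of $v^+$ in $\ganz$ such that whenever $x \in U^- \cap \XX$ and $y \in U^+ \cap \XX$, the geodesic segment $\sigma_{x,y}$ meets the open ball $B_{v(0)}(b)$. Granting this, the three conclusions of the lemma (existence of a joining geodesic, control of its distance to $v(0)$, and the width bound) all follow by extraction of limits.

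To prove the core claim I argue by contradiction: suppose instead there are sequences $x_n, y_n \in \XX$ with $x_n \to v^-$ and $y_n \to v^+$ in $\ganz$ such that the segments $\sigma_n := \sigma_{x_n,y_n}$ avoid $\overline{B_{v(0)}(b)}$. Let $p_n = \sigma_n(t_n)$ realize $d(v(0),\sigma_n)$, so $d(p_n, v(0)) \ge b$. Because $v^-\neq v^+$, CAT$(0)$ comparison geometry forces $d(p_n, v(0))$ to stay bounded, so properness of $\XX$ and Arzel\`a--Ascoli let me pass to a limit geodesic line $\sigma \in \SX$ of the reparametrized segments $t \mapsto \sigma_n(t+t_n)$. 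By construction $\sigma^\pm = v^\pm$ and $d(\sigma(0), v(0)) \ge b$; moreover $\sigma(0)$ is the orthogonal projection of $v(0)$ onto $\sigma$. Since $\sigma$ and $v$ share both endpoints, the flat strip theorem puts them on a common flat strip, so $d(\sigma(0), v(0)) \le \diam\bigl(C_{(v^-v^+)}\bigr) = \width(v) < b$, contradiction.

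With the core claim in hand I derive the lemma. Shrinking $U^\pm$ if necessary so that they are disjoint in $\ganz$ (possible since $v^- \neq v^+$), given $\xi \in U^- \cap \rand$ and $\eta \in U^+ \cap \rand$ I choose $x_k \to \xi$, $y_k \to \eta$ with $x_k, y_k \in \XX \cap U^\pm$ for $k$ large. Each segment $\sigma_k := \sigma_{x_k, y_k}$ meets $B_{v(0)}(b)$, so after reparametrizing it to pass through $B_{v(0)}(b)$ at time zero, Arzel\`a--Ascoli produces a limit geodesic line $w \in \SX$ with $w^- = \xi$, $w^+ = \eta$, and $d(w(0), v(0)) < c$. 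To see $w$ is rank one with $\width(w) \le 2c$, apply the core claim to any line $w' \sim w$: its endpoints lie in $U^\pm$, so long enough subsegments of $w'$ meet $B_{v(0)}(b)$, giving $d(w'(t'), v(0)) < c$ for some $t' \in \RR$. Using the flat strip theorem for the parallel lines $w$ and $w'$, together with a Hopf/Busemann normalization so that the chosen representatives sit on the same horosphere of $\xi$, the triangle inequality yields $d(w(0), w'(0)) \le 2c$, bounding $\width(w) = \diam\bigl(C_{(\xi\eta)}\bigr)$ by $2c$.

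The main obstacle is the flat-strip bookkeeping: one must convert the pointwise bound ``$d(\sigma(0), v(0)) \ge b$'' into a parallel-distance bound ``$d(\sigma, v) \ge b$'' (which is legitimate only because $\sigma(0)$ is the projection of $v(0)$), and at the end one must be careful to compare Hopf representatives of $w$ and $w'$ on the same horosphere before invoking the triangle inequality. Neither step is conceptually deep, but each must be handled precisely in order to preserve the exact constants $c$ and $2c$ appearing in the statement.
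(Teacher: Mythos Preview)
The paper does not prove this lemma; it is quoted as a reformulation of Lemma~III.3.1 in Ballmann's monograph, so there is no ``paper's own proof'' to compare against. I will therefore assess your argument directly.

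There is a real gap in your proof of the core claim. You write that ``because $v^-\neq v^+$, CAT$(0)$ comparison geometry forces $d(p_n, v(0))$ to stay bounded,'' but this is false without the rank one hypothesis. In the Euclidean plane take $v$ to be the $x$-axis, $x_n=(-n,\sqrt{n})$, $y_n=(n,\sqrt{n})$; then $x_n\to v^-$ and $y_n\to v^+$ in the cone topology, yet $\sigma_{x_n,y_n}$ is the horizontal segment at height $\sqrt{n}$, so $d(v(0),\sigma_n)=\sqrt{n}\to\infty$. Of course the plane has no rank one geodesics, so this does not contradict the lemma---but it shows the boundedness you assert cannot come from $v^-\neq v^+$ and comparison alone. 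Your flat-strip contradiction only fires once you already have a limit line $\sigma$, i.e.\ only in the bounded case. In Ballmann's argument the unbounded case is handled separately: from segments $\sigma_n$ staying far from $v(0)$ while their endpoints shadow $v(\pm T_n)$ one extracts, via convexity of $t\mapsto d(v(t),\sigma_n)$ and a rescaling/limiting procedure, a flat half-plane bounded by $v$, contradicting $\width(v)<\infty$. You need to supply this step (or an equivalent one) before your Arzel\`a--Ascoli extraction is legitimate.

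The rest of your outline is sound. Once the core claim is established, the extraction of a limit line $w$ through $B_{v(0)}(b)$ is routine, and your width bound is correct: any line $w'$ with $(w')^\pm=(\xi,\eta)$ has $w'(\pm T)\in U^\pm$ for large $T$, so the core claim forces $w'$ itself to meet $B_{v(0)}(b)$; since any two such parallel lines both pass within $b$ of $v(0)$, the cross-section $C_{(\xi\eta)}$ has diameter at most $2b<2c$. The Hopf/Busemann normalization you mention is not needed for this.
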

This lemma implies that the set $\reg$ is open in $\SX$; we emphasize that $\zero$ in general need not be an open subset of $\SX$: In every open neighborhood of a {\hl zero width} rank one geodesic there may exist a rank one geodesic of arbitrarily small but strictly positive width.


Let us now get back to the Hopf parametrization map defined in~(\ref{HopfPar}): 
As stated in \cite[Proposition 5.10]{Ricks} the $\is(\XX)$-action on $\SX$ descends to an action on $\ndpt\SX \times \RR=\Hopf_x(\SX)$ by homeomorphisms via 
\[ \gamma (\xi,\eta, s):=\bigl(\gamma \xi,\gamma \eta, s+\bs_{\gamma\xi}(\gamma x,x)\bigr)\quad\text{for}\quad \gamma\in\is(\XX).\]

Moreover, the action of $\is(\XX)$ is well-defined on  the set of equivalence classes  $[\SX]$ of elements in $\SX$, and the (well-defined) map
\begin{equation}\label{equivHopf} [\SX]\to \ndpt\SX\times \RR,\quad [v]\mapsto \Hopf_x(v)\end{equation}
is an $\is(\XX)$-equivariant homeomorphism. 
 For convenience we will frequently identify  $
\ndpt\SX\times \RR$ with  $[\SX]$. 
We also remark that the end point map $\ndpt:\SX\to \rand\times \rand$ induces a well-defined map $[\SX]\to\rand\times\rand$ which we will also denote $\ndpt$. 

As in Definition~5.4 of \cite{Ricks} we will say that a  sequence $(v_n)\subset\SX$ {\hd  converges weakly} to $v\in \SX$ if and only if 
  \begin{equation}\label{defweakconvergence}
  v_n^-\to v^-,\quad v_n^+\to v^+\quad\text{and }\  \bs_{v_n^-}\bigl(v_n(0),x\bigr)\to \bs_{v^-}\bigl(v(0),x\bigr);\end{equation}
  notice that this definition is independent of the choice of $x\in\XX$. 
 Obviously, weak convergence $v_n\to v$ is equivalent to the convergence $[v_n]\to [v]$ in $[\SX]$, and  $v_n\to v$ in $\SX$ always implies $[v_n]\to [v]$ in $[\SX]$.  
 
 We will also need the following 
result due to R.~Ricks, which implies that the  restriction of the Hopf parametrization map~(\ref{HopfPar})   to the subset $\mathcal{R}$ is proper:
\begin{lemma}[\cite{Ricks}, Lemma~5.9]\label{weakimpliesstrong}
If a sequence $(v_n)\subset\SX$ converges weakly to \underline{$v\in\reg$}, then some subsequence of $(v_n)$ converges to some $u\in\SX$ with $u\sim v$. 
\end{lemma}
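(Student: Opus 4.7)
The plan is to use Ballmann's Lemma~\ref{joinrankone} together with the Busemann-function control coming from weak convergence to show that $(v_n(0))$ stays in a compact subset of $\XX$, and then extract a subsequence by an Arzela-Ascoli argument on unit-speed maps $\RR\to\XX$.

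Fix $x\in\XX$ and pick $c>\width(v)$. By Lemma~\ref{joinrankone} there exist disjoint neighborhoods $U^-$ of $v^-$ and $U^+$ of $v^+$ in $\ganz$ such that every rank one geodesic joining $U^-$ to $U^+$ passes within distance $c$ of $v(0)$. Since $v_n^-\to v^-$ and $v_n^+\to v^+$, for all sufficiently large $n$ we have $v_n^-\in U^-$, $v_n^+\in U^+$, and thus there is $t_n\in\RR$ with $d\bigl(v_n(t_n),v(0)\bigr)<c$. To bound $t_n$, I would use the cocycle identity together with the identity $\bs_{v_n^-}(v_n(0),v_n(t_n))=-t_n$ to get
\[ \bs_{v_n^-}\bigl(v_n(0),x\bigr)\;=\;-t_n+\bs_{v_n^-}\bigl(v_n(t_n),x\bigr).\]
Because $d(v_n(t_n),v(0))<c$ and the Busemann function is jointly continuous in its boundary and space arguments, the rightmost term stays bounded; on the other hand, weak convergence forces the left-hand side to converge to $\bs_{v^-}(v(0),x)$. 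Hence $(t_n)$ is bounded, and consequently so is $d(v_n(0),v(0))\le |t_n|+c$.

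Since the $v_n$ are $1$-Lipschitz and $(v_n(0))$ lies in a compact set, $(v_n(t))$ is pointwise relatively compact in the proper space $\XX$; Arzela-Ascoli therefore furnishes a subsequence $v_{n_k}$ converging uniformly on compact sets, that is, in $\SX$, to some $u\in\SX$. It remains to verify $u\sim v$. Passing to the limit, the endpoints of $v_{n_k}$ converge to those of $u$ in the cone topology, so $u^-=v^-$ and $u^+=v^+$; joint continuity of the Busemann function together with $v_{n_k}(0)\to u(0)$ gives $\bs_{v_{n_k}^-}\bigl(v_{n_k}(0),x\bigr)\to\bs_{u^-}(u(0),x)$, and comparing with the weak-convergence hypothesis yields $\bs_{u^-}(u(0),x)=\bs_{v^-}(v(0),x)$. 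In view of~(\ref{equivHopf}), this is exactly $u\sim v$.

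The only delicate step is the boundedness of the shifts $t_n$: without the rank one hypothesis on $v$, Ballmann's lemma fails and the $v_n$ could drift to infinity along a flat direction while keeping the correct endpoints and Busemann data; rank one is precisely what rules this out, so this is where the hypothesis $v\in\reg$ is essential.
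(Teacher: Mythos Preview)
The paper does not supply its own proof of this lemma; it is quoted verbatim from Ricks (\cite{Ricks}, Lemma~5.9). So there is nothing in the present paper to compare your argument against. That said, your approach is correct and is essentially the natural one: use Lemma~\ref{joinrankone} to trap each $v_n$ near $v(0)$ at some time $t_n$, use the Busemann condition in weak convergence to bound $t_n$, conclude $(v_n(0))$ is relatively compact, extract by Arzel\`a--Ascoli, and then identify the Hopf coordinates of the limit.

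Two small points of phrasing are worth tightening. First, when you summarize Lemma~\ref{joinrankone} as ``every \emph{rank one} geodesic joining $U^-$ to $U^+$ passes within distance $c$ of $v(0)$'', you are understating what Ballmann's lemma gives and what you actually need: the conclusion is that \emph{every} geodesic with endpoints in $U^-$ and $U^+$ passes within $c$ of $v(0)$ (and is automatically rank one with width $\le 2c$). This matters because the $v_n$ are only assumed to lie in $\SX$, not in $\reg$; you need the stronger statement to conclude that $v_n$ itself passes close. Second, to bound $\bs_{v_n^-}(v_n(t_n),x)$ you do not need joint continuity of the Busemann function: the crude Lipschitz estimate $|\bs_{v_n^-}(v_n(t_n),x)|\le d(v_n(t_n),x)\le c+d(v(0),x)$ already suffices. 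Joint continuity of $(\xi,z)\mapsto\bs_\xi(z,x)$ on $\rand\times\XX$ (which does hold in proper CAT$(0)$ spaces) is genuinely needed only in the final step, to pass from $\bs_{v_{n_k}^-}(v_{n_k}(0),x)$ to $\bs_{u^-}(u(0),x)$.
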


 The topological space $\SX$ can be endowed with the {\hd geodesic flow} $(g^t)_{t\in\RR}$  which  is naturally defined by reparametrization of $v\in \SX$. In particular we have
\[ (g^t v)(0)=v(t) \quad\text{for all } \ v\in \SX\quad\text{and all }\ t\in\RR.\]
The geodesic flow induces a flow on the set of equivalence classes $[\SX]$ which we will also denote $(g^t)_{t\in\RR}$; via the $\is(\XX)$-equivariant homeomorphism $[\SX]\to\ndpt\SX\times \RR\,$ the action of the geodesic flow $(g^t)_{t\in\RR}$ on $[\SX]$ is equivalent to the translation action on the last factor of $\ndpt\SX \times \RR$ given by
\begin{equation}\label{geodflowHopf}
 g^t (\xi,\eta,s):=(\xi,\eta, s+t).\end{equation}


\section{Rank one isometries and rank one groups}\label{rankonegroups}

As in the previous section we let $(\XX,d)$ be a proper Hadamard space and denote $\is(\XX)$ the isometry group of $\XX$. 

\begin{definition}\label{hypaxiso}\
An isometry $\gamma\in\is(\XX)$ 
 is called {\hd axial}  if there exists a constant
$\ell=\ell(\gamma)>0$ and a geodesic $v\in \SX$ \st $\gamma v=g^{\ell} v$. 
We call
$\ell(\gamma)$ the {\hd translation length} of $\gamma$, and $v$
an {\hd invariant geodesic} of $\gamma$. The boundary point
$\gamma^+:=v^+$ (which is independent of the chosen invariant geodesic $v$) is called the {\hd attractive fixed
point}, and $\gamma^-:=v^-$ the {\hd repulsive fixed
point} of $\gamma$. 

An axial isometry $h$ is called {\hd rank one} if one (and hence any) invariant geodesic of $h$ belongs to  $\reg$; the  {\hd width} of $h$ is then defined as the width of an arbitrary invariant geodesic of $h$. 
\end{definition}
Notice that if $\gamma\in\is(\XX)$ is axial, then $\ndpt^{-1}(\gamma^-,\gamma^+)\subset\SX$ is the set of parametrized invariant geodesics of $\gamma$, and every axial isometry $\widetilde\gamma$ commuting with $\gamma$ satisfies $p\circ \ndpt^{-1}(\widetilde\gamma^-,\widetilde\gamma^+)=p\circ \ndpt^{-1}(\gamma^-,\gamma^+)$.  If $h$ is rank one, then  the fixed point set of $h$ equals $\{h^-, h^+\}$; moreover,  if $g$ is an axial isometry  commuting with $h$, then $g$ and $h$ clearly generate a virtually cyclic subgroup of  $\is(\XX)$. 

The following important lemma describes the north-south dynamics of rank one isometries:
\begin{lemma}[\cite{MR1377265}, Lemma III.3.3]\label{dynrankone} \
\ Let $h$ be a rank one isometry. Then
\begin{enumerate}
\item[(a)]  every point $\xi\in\rand\setminus\{h^+\}$ can be joined
to $h^+$ by a geodesic, and all these geodesics are  rank one,
\item[(b)] given neighborhoods $U^-$ of $h^-$ and $U^+$ of $h^+$ in $\ganz$
there exists $N\in\NN$ \st
 $\  h^{-n}(\ganz\setminus U^+)\subset U^-$ and
$h^{n}(\ganz\setminus U^-)\subset U^+$ for all $n\ge N$.
\end{enumerate}
\end{lemma}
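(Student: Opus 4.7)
The plan is to anchor everything on a fixed invariant geodesic $v\in\reg$ of $h$: by hypothesis such a $v$ exists, with finite width $c_0:=\width(v)$ and translation length $\ell>0$, satisfying $v^\pm=h^\pm$ and $hv(t)=v(t+\ell)$. Choose any $c>c_0$ and invoke Ballmann's Lemma \ref{joinrankone} applied to $v$: this produces disjoint open neighborhoods $V^-$ of $h^-$ and $V^+$ of $h^+$ in $\ganz$ such that any pair $(\xi',\eta')\in V^-\times V^+$ is joined by a rank one geodesic of width at most $2c$ passing within distance $c$ of $v(0)$. Both (a) and (b) will reduce to this lemma together with the fact that $h^nv(0)=v(n\ell)\to h^+$ along a rank one direction.

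I would prove (b) first and then deduce (a). For (b), shrink the given $U^\pm$ so that $U^\pm\subset V^\pm$. Argue by contradiction: if infinitely many iterates fail, extract $n_k\to\infty$ and $\zeta_k\in\ganz\setminus U^-$ with $h^{n_k}\zeta_k\notin U^+$; by compactness of $\ganz$, pass to subsequences with $\zeta_k\to\zeta\neq h^-$ and $h^{n_k}\zeta_k\to\eta\neq h^+$. The main obstacle is the quantitative convexity estimate that delivers the contradiction. The idea is to track geodesic rays based at $v(0)$: since $\zeta\neq h^-$, for large $k$ the ray from $v(0)$ toward $\zeta_k$ makes an angle bounded below with the ray toward $h^-$, so, after translation by $h^{n_k}$, the ray from $v(n_k\ell)$ toward $h^{n_k}\zeta_k$ still makes an angle bounded below with the ray toward $h^-=h^{n_k}h^-$. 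By CAT$(0)$-convexity of the distance to $v(\RR)$, together with the rank one structure along $v$, such a ray must pass within the tube of radius $c$ around $v(0)$ only if its forward endpoint eventually lies in $V^+$, contradicting $h^{n_k}\zeta_k\to\eta\notin V^+$. The symmetric statement for $h^{-n}$ follows by applying the same argument to $h^{-1}$, which is again rank one with swapped fixed points.

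Part (a) is then a short deduction from (b). Given $\xi\neq h^+$, fix a neighborhood $U^+\subset V^+$ of $h^+$ with $\xi\notin U^+$; applying (b) to $h^{-1}$ with the pair $(U^+,V^-)$ yields $N\in\NN$ such that $h^{-N}\xi\in V^-$. Since $h^+\in V^+$, Ballmann's Lemma furnishes a rank one geodesic $w$ joining $h^{-N}\xi$ to $h^+$, and then $h^N w$ is a rank one geodesic from $\xi$ to $h^+$, since isometries preserve rank one. That \emph{every} geodesic joining $\xi$ to $h^+$ is rank one follows by re-applying Ballmann's Lemma (after pulling back by a sufficiently high power of $h$) to small neighborhoods of the endpoints of any such geodesic, which again yields the uniform width bound $2c$. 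The whole argument therefore rests on the north-south estimate in (b), which is the only step where the finiteness of $\width(v)$ is genuinely exploited.
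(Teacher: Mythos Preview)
The paper does not prove this lemma; it is quoted verbatim from Ballmann's book \cite[Lemma~III.3.3]{MR1377265}. So there is no ``paper's proof'' to compare against, and your attempt must stand on its own.

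Your overall architecture is sound: anchor on Lemma~\ref{joinrankone}, establish (b), and derive (a) from (b) by pulling $\xi$ back into $V^-$ with a high negative power of $h$. The deduction of (a) from (b), including the uniform width bound for \emph{all} geodesics joining $\xi$ to $h^+$, is correct. The gap is in your argument for (b). You extract $\zeta_k\to\zeta\ne h^-$ with $h^{n_k}\zeta_k\to\eta\ne h^+$ and then assert two things that are not justified. First, the claim that $\angle_{v(0)}(\zeta_k,h^-)$ is bounded below does not follow from $\zeta\ne h^-$: Alexandrov angles are only upper semicontinuous (Proposition~II.9.2 in \cite{MR1744486}, the very estimate used elsewhere in this paper), and in a general CAT$(0)$ space two distinct rays from a point may well have angle zero on an initial segment before branching. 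Second, and more seriously, the sentence ``by CAT$(0)$-convexity of the distance to $v(\RR)$, together with the rank one structure along $v$, such a ray must pass within the tube of radius $c$ around $v(0)$ only if its forward endpoint eventually lies in $V^+$'' is precisely the north--south dynamics you are trying to prove; you have not explained why the ray from $v(n_k\ell)$ to $h^{n_k}\zeta_k$ should come near $v(0)$ at all, nor how finiteness of $\width(v)$ forces the conclusion. Convexity of $d(\cdot,v(\RR))$ by itself gives nothing here, since it holds for every geodesic, flat or not.

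What actually makes (b) work in Ballmann's proof is a projection argument: one shows that the nearest-point projection $\pi:\ganz\to v(\RR)\cup\{h^\pm\}$ is well-behaved because $v$ bounds no flat half-plane, so that $\pi^{-1}$ of a bounded arc of $v(\RR)$ has compact closure in $\ganz\setminus\{h^-,h^+\}$. Then $h^{n}$ pushes projections forward by $n\ell$ along $v(\RR)$, and the conclusion follows. Your sketch gestures at this (``rank one structure along $v$'') but never isolates or proves the needed statement about the projection; that is the missing idea.
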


We next prepare for an extension of part (a) of the lemma above,  
which replaces the fixed point $h^+$ of the rank one isometry $h$ by the end point of a certain geodesic:
\begin{definition}[compare Section~5 in \cite{Ricks}]\label{weakstrongrecurrencedef} Let $G<\is(\XX)$ be any subgroup. An element   $v\in\SX$ is said to {\hd (weakly) $G$-accumulate} on $u\in\SX$ if  
there exist sequences $(g_n)\subset G$ and $(t_n)\nearrow \infty$  \st $g_n g^{t_n} v$ converges (weakly) to $u$ as $n\to\infty$; $v$ is said to be {\hd (weakly) $G$-recurrent} if $v$ (weakly) $G$-accumulates on $v$. 
 \end{definition} 
Notice that if $v$ is an invariant geodesic of an axial isometry $\gamma\in\is(\XX)$, then $v$ is $\langle \gamma\rangle$-recurrent and hence in particular $\is(\XX)$-recurrent. Moreover, if $v\in\SX$ weakly $G$-accumulates on  \underline{$u\in\reg$}, then by Lemma~\ref{weakimpliesstrong} $v$ $G$-accumulates on some element $w\sim u$.  In particular, if \underline{$v\in\zero$} is weakly $G$-recurrent, then it is already $G$-recurrent.

The following statements show the relevance of the previous notions. 
%
\begin{lemma}[see Section~6 in \cite{Ricks} or  Lemma~3.11 in \cite{LinkHTS}]\label{jointoweakrecurrent}
If $v\in\reg$ is weakly\break $\is(\XX)$-recurrent then for every $\,\xi\in\rand\setminus\{v^+\}$ there exists $w\in\reg$ satisfying
\[ \width(w)\le \width(v)\quad\text{and }\quad  (w^-,w^+)=(\xi, v^+).\] 
\end{lemma}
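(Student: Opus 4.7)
The strategy is to combine Ballmann's joining Lemma~\ref{joinrankone} with the strong-convergence upgrade of weak recurrence provided by Lemma~\ref{weakimpliesstrong}, producing $w$ as a weak subsequential limit of approximating rank one geodesics. From the weak $\is(\XX)$-recurrence of $v$ I would first extract sequences $(g_n)\subset\is(\XX)$ and $t_n\nearrow\infty$ with $v_n:=g_n g^{t_n}v\to v$ weakly; since $v\in\reg$, Lemma~\ref{weakimpliesstrong} upgrades this (along a subsequence) to strong convergence $v_n\to u$ in $(\SX,d_1)$ for some $u\sim v$. In particular $u\in\reg$ with $(u^-,u^+)=(v^-,v^+)$ and $\width(u)=\width(v)$, and the endpoint convergences $g_n v^\pm\to v^\pm$ hold in $\rand$.

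Next, fix a decreasing sequence $c_k\searrow\width(v)$. Applying Lemma~\ref{joinrankone} to $v$ with parameter $c_k$ yields open neighborhoods $U_k^-\ni v^-$ and $U_k^+\ni v^+$ such that any pair $(\eta,\zeta)\in U_k^-\times U_k^+$ is joined by a rank one geodesic of width $\le 2c_k$ passing within $c_k$ of $v(0)$. For $n$ sufficiently large relative to $k$, the strong convergence $v_n\to u$ forces $g_n v^\pm\in U_k^\pm$. The core construction is then to use the isometries $g_n$ together with a north--south-type argument (in the spirit of Lemma~\ref{dynrankone}(b), but driven by the iterated recurrence of $v$ rather than a single rank one isometry) to produce, for each $k$, a rank one geodesic $w_k$ joining $\xi$ to $v^+$ of controlled width.

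By properness of $\XX$ and compactness of $\ganz$, the sequence $(w_k)$ subconverges weakly to some $w\in\SX$ with $(w^-,w^+)=(\xi,v^+)$. The bound $\width(w)\le\width(v)$ is then obtained by exploiting the rigidity of the approximating geodesics: the $v_n$ are isometric translates of $v$ carrying cross-sections $C_{(v_n^-v_n^+)}$ of diameter exactly $\width(v)$, and the strong convergence $v_n\to u\sim v$ forces the cross-section of the limit $w$ to be pinned to that of $v$, refining Ballmann's generic bound $2c_k$ to the sharp bound $\width(v)$. This finally yields $w\in\reg$ with $\width(w)\le\width(v)$.

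The main obstacle is precisely this width-sharpening step: without the recurrence, Lemma~\ref{joinrankone} only yields $\width(w_k)\le 2c_k$, which at best gives $\width(w)\le 2\width(v)$ in the limit. The recurrence of $v$ provides a rigid family of rank one ``models'' of width exactly $\width(v)$, and a careful geometric argument using the strong convergence $v_n\to u\sim v$ is required to transfer this rigidity to the cross-section of $w$, replacing the crude factor of $2$ in Ballmann's Lemma by a precise comparison to the cross-section of $v$.
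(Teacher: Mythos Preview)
The paper does not actually prove this lemma: it is quoted with attribution to Section~6 of \cite{Ricks} and Lemma~3.11 of \cite{LinkHTS}, and no argument is given in the present text. So there is no in-paper proof to compare against; the question is whether your sketch stands on its own.

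It does not. You correctly identify the ingredients (Ballmann's Lemma~\ref{joinrankone}, the weak-to-strong upgrade of Lemma~\ref{weakimpliesstrong}, and a limiting procedure), but the two decisive steps are left as assertions rather than arguments. First, the ``north--south-type argument'' that is supposed to produce a geodesic from $\xi$ to $v^+$ is never carried out. You write that one should ``use the isometries $g_n$ \dots\ to produce, for each $k$, a rank one geodesic $w_k$ joining $\xi$ to $v^+$,'' but this is precisely the content of the lemma. In the absence of an axial isometry, Lemma~\ref{dynrankone}(b) is not available, and one must actually show that $g_n\xi$ is eventually trapped in the neighborhood $U_k^-$ of $v^-$ (so that Ballmann's Lemma applies to the pair $(g_n\xi,g_nv^+)$, after which one pulls back by $g_n^{-1}$). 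This requires a concrete geometric argument---typically controlling where the rays $\sigma_{v(t_n),\xi}$ go after applying $g_n$---and you have not supplied one.

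Second, you correctly flag that Ballmann's Lemma alone only gives $\width(w)\le 2c$, and that recurrence is needed to sharpen this to $\width(w)\le\width(v)$. But your proposed mechanism (``the strong convergence $v_n\to u\sim v$ forces the cross-section of the limit $w$ to be pinned to that of $v$'') is not an argument: the cross-section $C_{(\xi v^+)}$ is a priori unrelated to $C_{(v^-v^+)}$, and nothing you have written explains why parallel copies of $w$ should be controlled by parallel copies of $v$. The actual proofs in the cited references establish this by showing that any geodesic parallel to $w$ is, under the recurrence isometries $g_n$, pushed onto a geodesic parallel to $v$; your sketch does not make this step. As written, your outline would at best yield $\width(w)\le 2\,\width(v)$, which is strictly weaker than the stated conclusion.
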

We will also need the following generalization of a statement originally due to G.~Knieper in the manifold setting; recall the definiton of the distance function $d_1$ from (\ref{metriconSX}). 
\begin{lemma}[Lemma~7.1 in \cite{LinkHTS} or  Proposition~4.1 in \cite{MR1652924}] \label{KniepersProp}\ 
 Let $u\in {\mathcal Z}$ be an $\is(\XX)$-recurrent rank one geodesic of zero width. Then for all $v\in \SX$ with $v^+=u^+$ and $\bs_{v^+}(v(0),u(0))=0$ we have
$$ \lim_{t\to\infty} d_1(g^t v, g^tu)=0.$$
\end{lemma}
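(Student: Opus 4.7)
The approach is first to establish pointwise convergence $d(v(t),u(t))\to 0$, and then to upgrade this to convergence in the supremum metric $d_1$. Set $f(t) := d(v(t),u(t))$; by the CAT$(0)$-inequality $f$ is convex, and since $v^+ = u^+$, asymptotic rays give $f$ bounded above on $[0,\infty)$. A standard convexity argument (extrapolation forward would force $f$ to blow up) then shows $f$ is non-increasing on all of $\RR$, so the limit $L := \lim_{t\to\infty} f(t) = \inf_{r\in\RR} f(r) \geq 0$ exists. Here the Busemann hypothesis $\bs_{v^+}(v(0), u(0)) = 0$ guarantees that $d(v(t),u(t))$, rather than $d(v(t),u(t+a))$ for some offset $a$, is the correct quantity to study.

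The heart of the proof is to show $L = 0$ using the $\is(\XX)$-recurrence of $u$. Since $u\in\zero$, weak and strong $\is(\XX)$-recurrence coincide (as noted after Definition~\ref{weakstrongrecurrencedef}), so there exist sequences $(g_n)\subset\is(\XX)$ and $t_n\nearrow\infty$ with $g_n g^{t_n} u \to u$ in $d_1$. I would then consider the sequence $(g_n g^{t_n} v)\subset\SX$; its origins $g_n v(t_n)$ satisfy
\[ d\bigl(g_n v(t_n), g_n u(t_n)\bigr) = f(t_n) \to L \quad\text{and}\quad g_n u(t_n) \to u(0), \]
so they remain bounded. Properness of $\XX$ together with an Arzel\`a-Ascoli argument for $1$-Lipschitz maps on $\RR$ then yields a subsequence $g_{n_k} g^{t_{n_k}} v \to \tilde v$ in $d_1$ for some $\tilde v\in\SX$. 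For every $s\in\RR$ one computes
\[ d\bigl(\tilde v(s), u(s)\bigr) = \lim_{k\to\infty} f(t_{n_k} + s) = L, \]
so $\tilde v$ and $u$ are parallel geodesic lines at constant distance $L$. The flat strip theorem then forces $\tilde v^- = u^-$, putting both $\tilde v$ and $u$ inside $\ndpt^{-1}(u^-, u^+)$ with origins at distance $L$ in the cross-section $C_{(u^- u^+)}$. This gives $\width(u) \geq L$, and since $u\in\zero$ we conclude $L = 0$.

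To upgrade $f(t)\to 0$ to $d_1(g^t v, g^t u) = \sup_s e^{-|s|} f(t+s) \to 0$, I would use the crude bound $f(r) \leq 2|r| + f(0)$ (triangle inequality along $v$ and $u$) together with the fact that $f$ is non-increasing. Given $\ep>0$, choose $T$ with $f(r)<\ep$ for $r\geq T$ and split the supremum at $s=-t/2$: on $\{s\geq -t/2\}$ one has $f(t+s) \leq f(t/2) < \ep$ for $t\geq 2T$ and $e^{-|s|}\leq 1$, while on $\{s<-t/2\}$ the weight $e^s \leq e^{-t/2}$ decays exponentially and dominates the at-most-linear growth of $f(t+s)$.

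The main obstacle is the middle step: producing the limit line $\tilde v$ parallel to $u$ and extracting the width bound. This is precisely where the rank-one \emph{zero-width} hypothesis on $u$ is essential (not just recurrence), since without it one would only obtain $L\leq \width(u)$, which is strictly weaker than what we need.
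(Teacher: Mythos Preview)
The paper does not prove this lemma; it is quoted from Lemma~7.1 in \cite{LinkHTS} (the general CAT$(0)$ version) and Proposition~4.1 in \cite{MR1652924} (Knieper's original manifold version). Your argument is correct and is exactly the standard strategy of those references: transport the asymptotic pair $(g^{t_n}v,g^{t_n}u)$ back near $u$ via recurrence, extract a limiting line $\tilde v$ parallel to $u$ at constant distance $L$, and use $\width(u)=0$ to force $L=0$.

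One small point you pass over: to conclude that $\tilde v(0)$ and $u(0)$ lie in the \emph{same} cross-section of $(u^-u^+)\cong C_{(u^-u^+)}\times\RR$ (so that $L\le\width(u)$ rather than merely $L^2=\width(u)^2+a^2$ for some $\RR$-offset $a$), you need the Busemann relation $\bs_{u^+}\bigl(v(t_{n_k}),u(t_{n_k})\bigr)=0$ to survive in the limit as $\bs_{u^+}\bigl(\tilde v(0),u(0)\bigr)=0$. This follows from joint continuity of $(\xi,x,y)\mapsto\bs_\xi(x,y)$ on $\rand\times\XX\times\XX$ together with $g_{n_k}u^+\to u^+$ (a consequence of $g_{n_k}g^{t_{n_k}}u\to u$). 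You allude to the role of the Busemann hypothesis in Step~1, but this is the precise place where it is consumed.
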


We will now deal with discrete subgroups $\Gamma$ of the isometry group $\is(\XX)$ of $\XX$.   The {\hl geometric limit set} $\Lim$ of $\Gamma$  is defined by
$\Lim:=\overline{\Gamma\cdot x}\cap\rand,$
where $x\in\XX$ is an arbitrary point.  

If $\XX$ is a CAT$(-1)$-space, then a  discrete group $\Gamma<\is(\XX)$ is called {\hl non-elementary} if its limit set $L_\Gamma$ is infinite. 
It is well-known that this implies that $\Gamma$ contains two axial isometries  with disjoint fixed point sets (which are actually rank one of zero width as $\SX=\zero$ for any CAT$(-1)$-space). In the general setting this motivates the following 
\begin{definition}
We say that  two rank one isometries $g,h\in\is(\XX)$ are {\hd independent} if and only if $\{g^+,g^-\}\cap \{h^+,h^-\}\ne\emptyset$ (see for example Section~2 of \cite{MR2629900} and Section~2 in \cite{MR2585575}).  Moreover, 
 a group $\Gamma< \is(\XX)$ is called  
{\hl rank one} if $\Gamma$ contains a pair of independent rank one elements.
\end{definition}
Obviously, if $\XX$ is CAT$(-1)$ then every non-elementary discrete isometry group 
is rank one. In  general however, the notion of rank one group seems very restrictive at first sight. Nevertheless we have the following weak hypothesis which ensures that a discrete group $\Gamma<\is(\XX)$ is rank one:
\begin{lemma}[\cite{LinkHTS}, Lemma~4.4]\label{inflimset}
If  $\Gamma<\is(\XX)$ is a discrete 
subgroup with infinite limit set $\Lim$ containing the positive end point $v^+$  of a weakly $\is(\XX)$-recurrent element $v\in\reg$, then $\Gamma$ is a rank one group.
\end{lemma}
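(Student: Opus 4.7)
The plan is to produce two rank one isometries in $\Gamma$ with disjoint fixed-point pairs. First I construct a single rank one element $h\in\Gamma$. By Lemma~\ref{jointoweakrecurrent}, the weak $\is(\XX)$-recurrence of $v\in\reg$ provides, for every $\xi\in\rand\setminus\{v^+\}$, a rank one geodesic of width at most $\width(v)$ joining $\xi$ to $v^+$. Since $v^+\in\Lim$, I choose $(\gamma_n)\subset\Gamma$ with $\gamma_nx\to v^+$ and pass to a subsequence so that $\gamma_n^{-1}x\to\zeta\in\Lim$. If $\zeta=v^+$, I replace $\gamma_n$ by $\gamma_n\alpha$ for a suitable $\alpha\in\Gamma$ with $\alpha x$ accumulating at some point of $\Lim\setminus\{v^+\}$ (available because $\Lim$ is infinite), so that the inverse-orbit limit of the new sequence lies off $v^+$. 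Apply Lemma~\ref{joinrankone} to a rank one geodesic joining $\zeta$ to $v^+$ to obtain disjoint open neighborhoods $U^-\ni\zeta$, $U^+\ni v^+$ in $\ganz$ such that any $(\xi,\eta)\in U^-\times U^+$ is joined by a rank one geodesic of uniformly bounded width. For $n$ large, $\gamma_nx\in U^+$ and $\gamma_n^{-1}x\in U^-$; a standard boundary-dynamics argument then produces fixed points $h^+\in\overline{U^+}$, $h^-\in\overline{U^-}$ of $h:=\gamma_n$, joined by a rank one geodesic. Combined with the diverging displacement $d(x,hx)\to\infty$, this forces $h$ to be axial on its invariant flat strip, hence rank one by construction.

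Next I extract an independent rank one partner. Since $\Lim$ is infinite while $\{h^+,h^-\}$ has only two points, some $\delta\in\Gamma$ satisfies $\delta\{h^+,h^-\}\neq\{h^+,h^-\}$: otherwise $\Gamma$ would stabilize this pair setwise, and the bounded cross-section $C_{(h^+h^-)}$ of the associated flat strip (finite since $h$ is rank one) would force $\Gamma$ to be virtually cyclic, contradicting the infinitude of $\Lim$. The conjugate $g:=\delta h\delta^{-1}$ is then axial rank one with fixed-point pair $\{\delta h^+,\delta h^-\}\neq\{h^+,h^-\}$. If the two pairs are already disjoint, $h$ and $g$ are independent. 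Otherwise they share exactly one point; in that case Lemma~\ref{dynrankone}(b) applied to $h$ shows that for $n$ large the product $h^ng$ has an attracting fixed point arbitrarily close to $h^+$ and a repelling fixed point arbitrarily close to the non-shared element of $\{\delta h^+,\delta h^-\}$, and a second application of Lemma~\ref{joinrankone} certifies that $h^ng$ is axial rank one with fixed-point pair disjoint from $\{h^+,h^-\}$.

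The main obstacle is the transition from boundary-level convergence of $(\gamma_n)$ to the existence of an axial rank one element in $\Gamma$: continuity alone yields boundary fixed points, but one must combine the joining property of Lemma~\ref{joinrankone} with the divergence $d(x,\gamma_nx)\to\infty$ and the bounded geometry of the rank one flat strip to rule out elliptic or parabolic behavior and conclude axiality. A secondary subtlety is the ping-pong step, where the shared-fixed-point case demands simultaneous control of both endpoints of $h^ng$ via the north--south dynamics of Lemma~\ref{dynrankone}(b), together with a further invocation of Lemma~\ref{joinrankone} to promote the resulting boundary data to an actual rank one axis in $\XX$.
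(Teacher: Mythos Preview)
The paper does not contain a proof of this lemma: it is quoted verbatim from \cite{LinkHTS} (Lemma~4.4) and is followed only by the one-line remark that the conclusion is obvious when $v^+$ is already the fixed point of a rank one isometry. There is therefore nothing in the present paper to compare your argument against; the actual proof lives in the cited reference.

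On the substance of your sketch: the overall strategy---produce one rank one element of $\Gamma$ by a contraction/ping-pong argument near $v^+$, then conjugate to obtain an independent partner---is the standard one and is essentially what the proof in \cite{LinkHTS} does. Two places in your write-up are imprecise enough that a reader could not reconstruct the argument. First, the fix ``replace $\gamma_n$ by $\gamma_n\alpha$ with $\alpha x$ accumulating at some point of $\Lim\setminus\{v^+\}$'' does not parse: $\alpha$ is a single element, and what you actually need is $\alpha\in\Gamma$ with $\alpha^{-1}v^+\neq v^+$, so that $(\gamma_n\alpha)^{-1}x\to\alpha^{-1}v^+\neq v^+$. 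You should say why such $\alpha$ exists (if $\Gamma$ fixed $v^+$, use the rank one geometry of the geodesics emanating from $v^+$ together with infiniteness of $\Lim$ to reach a contradiction). Second, the phrase ``a standard boundary-dynamics argument then produces fixed points $h^\pm$ of $h:=\gamma_n$'' hides the real work: from $\gamma_n x\in U^+$ and $\gamma_n^{-1}x\in U^-$ one must show that $\gamma_n(\ganz\setminus U^-)\subset U^+$ (this uses the joining property from Lemma~\ref{joinrankone} and convexity), and then invoke a fixed-point theorem on $\overline{U^+}$; only afterwards does the bounded-width strip force $\gamma_n$ to be axial rather than parabolic. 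These steps are genuinely where the argument lives, so they should not be compressed to a single sentence.
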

Notice that the conclusion is obviously true when $v^+$ is a fixed point of a rank one isometry of $\XX$. 

We will now define an important subset of the limit set $\Lim$ of $\Gamma$. For that we let $x$, $y\in\XX$ arbitrary.  
A point $\xi\in\rand$ is called a {\hd radial limit point} if there exists $c>0$ and  sequences $(\gamma_n)\subset\Gamma$ and $(t_n)\nearrow\infty$ such that
\begin{equation}\label{radlimpoint} d\bigl(\gamma_n y, \sigma_{x,\xi}(t_n)\bigr)\le c\quad\text{for all }\ n\in\NN.\end{equation}
Notice that by the triangle inequality this condition is independent of the choice of $x$, $y\in\XX$.
The {\hd radial limit set} $\radlim\subset\Lim$ of $\Gamma$ is defined as the set of radial limit points. 

We will further denote
\begin{equation}\label{definezerorec} \zero_\Gamma^{\small{\mathrm{rec}}}:=\{v\in\zero\colon v\ \text{and } -v\ \text{are }  \Gamma\text{-recurrent}\}\end{equation}
the set of zero width parametrized geodesics which are $\Gamma$-recurrent in both directions. Notice that if $v\in\zero$ is weakly $\Gamma$-recurrent, then it is already $\Gamma$-recurrent according to the remark below Definition~\ref{weakstrongrecurrencedef}. We will also need the following
\begin{definition}\label{posnegrec}
An element $v\in\quotient{\Gamma}{\SX}$ is called {\hd positively and negatively recurrent}, if it possesses a lift $\widetilde v\in\SX$ \st both $\widetilde v$ and $-\widetilde v$ are $\Gamma$-recurrent.
\end{definition}

%

\section{Geodesic currents and Ricks' measure}\label{geodcurrentmeasures}

In this section we want to describe the construction of Ricks' measure from an arbitrary geodesic current on $\ndpt\reg$. We will also recall the properties of the Ricks' measure which are relevant for our purposes.  Our main references here are \cite[Section~7]{Ricks} and \cite[Section~5]{LinkHTS}.  

From here on $\XX$ will always be a proper Hadamard space and $\Gamma<\is(\XX)$ a discrete rank one group with 
\[  {\mathcal Z}_\Gamma:=\{v\in\zero\colon v^-,v^+\in\Lim\}\ne \emptyset. \]
Notice that according to Proposition~1 in \cite{LinkHTS} the latter hypothesis is always satisfied when $\XX$ is geodesically complete. For later use we further fix a point $\xo\in\XX$.

Recall that the support of a Borel measure $\nu$ on a topological space $Y$ is defined as the set
\begin{equation}\label{supportdef}
\supp(\nu)=\{y\in Y \colon \nu(U)>0\ \text{ for every open neighborhood}\  U \ \text{of}\ y\}.\end{equation}
We also recall that a set $A\subset Y$ is said to have full $\nu$-measure, if $\nu(Y\setminus A)=0$.

We start with two finite Borel measures $\mu_-$, $\mu_+$ on $\rand$ with $\supp(\mu_\pm)
=\Lim$, and let  $\overline\mu$ be a $\Gamma$-invariant Radon measure on $\ndpt\reg$ which is absolutely continuous with respect to the product measure $(\mu_-\otimes \mu_+)\ein_{\lower 0.4ex\hbox{$\scriptstyle\ndpt\reg$}}$.  Such $\overline\mu$ is called a {\hl quasi-product geodesic current}  on $\ndpt\reg$ (see for example Definition~5.2 of \cite{LinkHTS}). 

Following Ricks' approach we can define a Radon measure $\overline m=\overline\mu\otimes \lambda $ on $[\reg]\cong \ndpt\reg\times \RR$, where $\lambda$ denotes Lebesgue measure on $\RR$.  Now according to Lemma~\ref{joinrankone} $\Gamma$ acts  properly on $[\reg]\cong\ndpt\reg\times \RR$ which  admits a proper metric. Since the action is by homeomorphisms and  preserves the Borel  measure $\overline m=\overline\mu\otimes\lambda$, there is (see for instance, \cite[Appendix A]{RicksThesis}) a unique Borel quotient measure $\overline m_\Gamma$ on $\quotient{\Gamma}{[ \reg ] }$ 
satisfying the characterizing property
\begin{equation}\label{chardesmeasure} \int_{\overline A} \widetilde h\d \overline m=\int_{\tiny\quotient{\Gamma}{[\reg]}} \bigl( h\cdot f_{\overline A}\bigr) \d \overline m_\Gamma\end{equation}
for all Borel sets  $ \overline A\subset [\reg]$ and $\Gamma$-invariant Borel maps 
$ \widetilde h:[\reg]\to [0,\infty]$ and\break $\widetilde f_{\overline A}:[\reg]\to [0,\infty]$ defined by $\widetilde f_{\overline A}([v]):= \#\{\gamma\in\Gamma\colon \gamma [v]\in \overline A\}$ for $[v]\in\reg$,  and with   $h$ and $f_{\overline A}$ the maps on $\quotient{\Gamma}{[\reg]}$ induced from $\widetilde h$ and $\widetilde f_{\overline A}$. 


Our final goal is to construct from a weak Ricks' measure $\overline m_\Gamma$ a geodesic flow invariant measure on $\quotient{\Gamma}{\SX}$. 
So let us first remark that $\zero\subset\reg$ is a Borel subset  by 
semicontinuity of the width function~(\ref{widthdef}) (see Lemma~8.4 in \cite{Ricks}), and
that $\Hopf_\xo\ein_{\lower 0.4ex\hbox{$\scriptstyle\zero$}}:\zero\to\ndpt\zero\times\RR\cong [\zero]$ is a homeomorphism onto its image; hence $ [\zero]\subset[\reg]$ is also a Borel subset.  So if $\quotient{\Gamma}{[\zero]}$ 
has positive mass with respect to the weak Ricks' measure $\overline m_\Gamma$   we may define (as in \cite[Definition~8.12]{Ricks})   a geodesic flow and $\Gamma$-invariant measure $m^0$ on $\SX$ by setting 
\begin{equation}\label{defstrongRicks}
 m^0(E):= \overline m \bigl(\Hopf_\xo(E\cap \zero)\bigr)\quad\text{for any Borel set }\ E\subset\SX;
 \end{equation}
this measure $m^0$ then induces the {\hd Ricks' measure} $m^0_\Gamma$ on $\quotient{\Gamma}{ \SX}$. 

Notice that in general $\overline m_\Gamma (\quotient{\Gamma}{[\zero]})= 0\ $ is possible; obviously this is always the case when $\zero=\emptyset$. However, 
  Theorem~6.7 and Corollary~2  in \cite{LinkHTS} immediately imply
\begin{theorem}\label{currentisproduct}
Let $\XX$ be a proper Hadamard space, and $\Gamma<\is(\XX)$ a discrete rank one group with $\zero_\Gamma\ne \emptyset$ (which is always the case if $\XX$ is geodesically complete). Let $\mu_-$, $\mu_+$ be non-atomic, finite Borel measures on $\rand$ with 
$\mu_\pm (\radlim)=\mu_\pm(\rand)$, and $\overline\mu\sim\bigl( \mu_-\otimes \mu_+\bigr)\ein_{\lower 0.4ex\hbox{$\scriptstyle\ndpt\reg$}}$ a  quasi-product geodesic current.

Then for the set $\zero_\Gamma^{\small{\mathrm{rec}}}$ defined in~(\ref{definezerorec})  we have  
 \[ (\mu_-\otimes\mu_+)(\ndpt\zero_\Gamma^{\small{\mathrm{rec}}})=(\mu_-\otimes\mu_+)(\ndpt\reg) =\mu_-(\rand)\cdot\mu_+(\rand),\] 
 and in particular $\overline\mu\sim \mu_-\otimes\mu_+$.
 \end{theorem}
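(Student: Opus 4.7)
My plan is to prove the two displayed equalities separately and then derive the absolute-continuity conclusion. The two driving hypotheses are that $\mu_\pm$ is concentrated on $\radlim$ and that $\zero_\Gamma$ is non-empty; fix once and for all a $v_0\in\zero_\Gamma$, so that $v_0^\pm\in\Lim$ are rank one endpoints.

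First I would establish $(\mu_-\otimes\mu_+)(\ndpt\reg)=\mu_-(\rand)\mu_+(\rand)$. Since $\mu_\pm$ is concentrated on $\radlim$, it is enough to show that the complement of $\ndpt\reg$ inside $\radlim\times\radlim$ is $(\mu_-\otimes\mu_+)$-null. Lemma~\ref{jointoweakrecurrent} applied to $v_0$ and to $-v_0$ already handles pairs involving $v_0^\pm$: any other boundary point can be joined to $v_0^\mp$ by a rank one geodesic. For generic pairs $(\xi,\eta)\in\radlim\times\radlim$, I would use that $\Gamma$ is rank one, so that attractive fixed points of rank one elements of $\Gamma$ are dense in $\Lim$ (a standard north-south argument based on Lemma~\ref{dynrankone}). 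Approximating $\xi$ and $\eta$ by such fixed points, Lemma~\ref{joinrankone} supplies a rank one joining geodesic in the open product of neighborhoods $U^-\times U^+$; non-atomicity of $\mu_\pm$ lets me discard the countable set of exceptional pairs.

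To upgrade $\ndpt\reg$ to $\ndpt\zero_\Gamma^{\small{\mathrm{rec}}}$, I would apply a Poincar\'e recurrence argument to the flow on $\quotient{\Gamma}{[\reg]}$. The Radon measure $\overline m=\overline\mu\otimes\lambda$ descends to a $g_\Gamma$-invariant quotient, and conservativity of this flow (invoked from the rank one framework of the preceding sections, using that $\mu_\pm$ charges $\radlim$) gives that almost every class is recurrent in both $\pm\infty$ directions. Lemma~\ref{weakimpliesstrong} then promotes weak recurrence of a class to strong recurrence of a representative in $\reg$. To force the representative actually into $\zero$, I would use that a rank one geodesic of positive width bounds a non-trivial flat strip whose boundary geodesics share endpoints; there are only countably many $\Gamma$-orbits of such strips, so the corresponding set of endpoint pairs is $(\mu_-\otimes\mu_+)$-null by non-atomicity. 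The ``in particular'' conclusion $\overline\mu\sim\mu_-\otimes\mu_+$ then follows, since absolute continuity of $\overline\mu$ with respect to $(\mu_-\otimes\mu_+)\ein_{\ndpt\reg}$ is part of the hypothesis and $\Gamma$-ergodicity of the boundary action forces the Radon-Nikodym density to be almost-everywhere positive on the common conull set $\ndpt\zero_\Gamma^{\small{\mathrm{rec}}}$.

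The main obstacle I expect is Step~1: simultaneously joining two arbitrary radial limit points by a rank one geodesic. Lemma~\ref{jointoweakrecurrent} handles only one endpoint at a time, so the delicate point is to run a two-sided perturbation in which both endpoints are approximated while the width of the joining geodesic remains uniformly controlled, so that the limit lies in $\reg$ rather than merely in $\SX$. Here the role of the assumption $\zero_\Gamma\neq\emptyset$ is precisely to supply the ``universal connectors'' $v_0^\pm$ around which the approximation can be anchored via the uniform width bound $2c$ provided by Lemma~\ref{joinrankone}.
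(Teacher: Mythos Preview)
The paper does not actually prove this theorem: it is stated as an immediate consequence of Theorem~6.7 and Corollary~2 in the author's companion paper \cite{LinkHTS}, with no argument given here. So there is nothing to compare your outline against in this paper; the substantive content lives in \cite{LinkHTS}.

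That said, your sketch has the right two-step architecture (first show $\ndpt\reg$ is $(\mu_-\otimes\mu_+)$-conull, then upgrade to $\ndpt\zero_\Gamma^{\small{\mathrm{rec}}}$), but there are genuine gaps beyond the one you flag. The most serious is your reduction to zero width: the claim that ``there are only countably many $\Gamma$-orbits of such strips'' is not justified and is not how the argument runs in \cite{LinkHTS} or in Ricks' work. In a general proper CAT$(0)$ space there is no reason for flat strips to fall into countably many $\Gamma$-orbits, and non-atomicity of $\mu_\pm$ on $\rand$ says nothing about nullity of the endpoint set of a possibly uncountable family of strips in $\rand\times\rand$. The actual passage from $\reg$ to $\zero$ uses a dynamical argument: one shows that a $\Gamma$-recurrent rank one geodesic must have zero width (recurrence combined with the flat strip structure forces the transverse diameter to vanish), which is a geometric statement about individual orbits rather than a counting of strips.

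Two smaller points. First, you invoke conservativity of the flow ``from the preceding sections,'' but nothing before Theorem~\ref{currentisproduct} in this paper establishes conservativity; that too comes from \cite{LinkHTS}, so you are implicitly citing the same external source. Second, your derivation of the ``in particular'' clause via $\Gamma$-ergodicity of the boundary action is circular: ergodicity is Theorem~\ref{propertiesofRicks}, which is stated \emph{after} and \emph{uses} Theorem~\ref{currentisproduct} (and requires the extra hypothesis $\Delta<\infty$). Fortunately you do not need it: once $(\mu_-\otimes\mu_+)\bigl((\rand\times\rand)\setminus\ndpt\reg\bigr)=0$, the restriction $(\mu_-\otimes\mu_+)\ein_{\ndpt\reg}$ coincides with $\mu_-\otimes\mu_+$ as measures on $\rand\times\rand$, and the absolute-continuity relation extends trivially.
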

 So in this case the Ricks' measure $m^0_\Gamma$ is actually 
equal to the weak Ricks' measure $\overline m_\Gamma$ used for its construction. Moreover, for the measure $m$ on $\SX$, from which the Ricks' measure descends, we have the formula 
\begin{equation}\label{measureformula} m(E)=\int_{\ndpt\zero } \lambda\bigl(p(E)\cap (\xi\eta)\bigr) \d \overline\mu(\xi,\eta),\end{equation}
where $\lambda$ again denotes Lebesgue measure, and $E\subset\SX$ is an arbitrary Borel set.
We further remark that if $\XX$ is a manifold, then the Ricks' measure is also equal to Knieper's measure $m^{\text{\scriptsize Kn}}_\Gamma$ associated to $\overline\mu$ which descends from 
\[ m^{\text{\scriptsize Kn}}(E):= \int_{\ndpt\SX} \vol_{(\xi\eta)}\bigl(p(E)\cap(\xi\eta)\bigr)\mathrm{d}\overline\mu(\xi,\eta)\quad\text{for any Borel set }\  E\subset \SX,\]
where $\vol_{(\xi\eta)}$ denotes the induced Riemannian volume element on the submanifold $(\xi\eta)\subset\XX$. 

From here on we will therefore denote the Ricks' measure  $m_\Gamma$ instead of $m_\Gamma^0$. 

%
For la\-ter reference we want to summarize what we know from Theorem~7.4 and Lemma~7.5 in \cite{LinkHTS}. Before we can state the result we denote 
${\mathcal B}(R)\subset\SX$  the set of all parametrized geodesics $v\in\SX$ with origin  $pv=v(0)\in B_R(\xo)$ 
and define
\begin{equation}\label{Deltadef} \Delta:=\sup \Big\{ \frac{\ln  \overline\mu\bigl(\ndpt{\mathcal B}(R)\bigr)}{R}\colon R>0\Big\}. \end{equation}

\begin{theorem}\label{propertiesofRicks}
 Let $\XX$, $\Gamma<\is(\XX)$, $\mu_-$, $\mu_+$ and $\overline\mu\,$ as in Theorem~\ref{currentisproduct}. We further assume that the constant $\Delta$ defined via (\ref{Deltadef}) 
  is finite. 
Then the dynamical systems $(\rand\times\rand, \Gamma,\mu_-\otimes\mu_+)$,  $\bigl(\ndpt \SX,\Gamma, \overline\mu\bigr)\,$ and $\bigl(\quotient{\Gamma}{\SX},g_\Gamma, m_\Gamma\bigr)$ are ergodic. 
\end{theorem}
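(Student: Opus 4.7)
The plan is to reduce all three ergodicity statements to the Hopf--Tsuji--Sullivan framework developed in \cite{LinkHTS}, specifically Theorem~7.4 and Lemma~7.5 there; under the standing hypotheses ($\mu_\pm$ non-atomic with full mass on $\radlim$, and $\Delta<\infty$) we are already on the ``ergodic'' side of the dichotomy. I would carry the three systems out in the order stated, bootstrapping from the boundary action up to the geodesic flow and using the equivalence $\overline\mu\sim\mu_-\otimes\mu_+$ from Theorem~\ref{currentisproduct} as the bridge.

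First I would treat $(\rand\times\rand,\Gamma,\mu_-\otimes\mu_+)$ via the classical Sullivan double-ergodicity argument adapted to this setting. Let $A\subset\rand\times\rand$ be $\Gamma$-invariant and Borel. Because $\mu_\pm(\radlim)=\mu_\pm(\rand)$, $(\mu_-\otimes\mu_+)$-almost every point of $A$ is a density point whose coordinates are both radial, and near such a pair $(\xi,\eta)$ one can follow the rank one joining geodesic (available by Lemma~\ref{joinrankone} and Theorem~\ref{currentisproduct}) out to orbit points $\gamma_n \xo$ accumulating radially on each end. The hypothesis $\Delta<\infty$ supplies a Sullivan-type shadow estimate bounding $\mu_\pm$-measures of shadows of balls above and below by exponentials in the radius; pulling a small product neighborhood of $(\xi,\eta)$ back by the $\gamma_n$ then spreads its density across macroscopic regions of $\rand\times\rand$, forcing $A$ to be null or conull. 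Ergodicity of $(\ndpt\SX,\Gamma,\overline\mu)$ is then immediate from the absolute continuity $\overline\mu\sim(\mu_-\otimes\mu_+)\ein_{\lower 0.4ex\hbox{$\scriptstyle\ndpt\reg$}}$ supplied by Theorem~\ref{currentisproduct}, since invariant null sets are preserved by equivalent measures.

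Third, for the flow on $\quotient{\Gamma}{\SX}$ I would run the Hopf argument on the zero-width recurrent set. Given a $g_\Gamma$-invariant integrable $f$, lift to a $\Gamma$- and flow-invariant $\widetilde f$ on $\SX$; by the formula (\ref{measureformula}) combined with Theorem~\ref{currentisproduct}, $m$-almost every $v$ lies in $\zero_\Gamma^{\small{\mathrm{rec}}}$, so its forward and backward Birkhoff averages $\widetilde f^\pm$ exist and coincide almost everywhere. The key reduction is that, by Lemma~\ref{KniepersProp}, two zero-width recurrent geodesics sharing positive endpoint and Busemann offset become $d_1$-close as $t\to+\infty$, so $\widetilde f^+$ is measurable with respect to $v^+$ alone, and symmetrically $\widetilde f^-$ depends only on $v^-$. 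Their common value is thus constant on $\Gamma$-orbits in $\rand\times\rand$, and the first step pins it down to a constant almost everywhere, giving ergodicity of $m_\Gamma$.

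The main obstacle is precisely this last step. Outside the CAT$(-1)$ or Riemannian setting, $\SX$ is not a smooth bundle over $\XX$, $\ndpt\SX$ is generally a proper subset of $\rand\times\rand$ with complicated geometry, and the Hopf collapse of Birkhoff averages to functions of a single endpoint is only justifiable on the recurrent zero-width locus; showing that this locus carries full $m$-mass and that the asymptotic comparison survives outside the pinched curvature world is exactly what Theorem~\ref{currentisproduct} and Lemma~\ref{KniepersProp} are designed to provide, and invoking them in the right order is the heart of the argument.
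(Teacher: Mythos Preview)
Your proposal is correct and matches the paper's approach: the paper itself does not prove Theorem~\ref{propertiesofRicks} but simply records it as a summary of Theorem~7.4 and Lemma~7.5 in \cite{LinkHTS}, exactly the reduction you invoke in your first sentence. Your subsequent sketch of the Sullivan double-ergodicity and Hopf arguments is a reasonable outline of what those cited results contain, and goes beyond what the present paper actually writes down.
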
 
We will repeatedly make use of the following argument, which is immediate by Fubini's theorem:  
\begin{corollary}\label{useFubini}
 Let $\XX$, $\Gamma<\is(\XX)$, $\mu_-$, $\mu_+$, $\overline\mu$ and $\,\Delta<\infty$ as in Theorem~\ref{propertiesofRicks}. Then if $\Omega\subset\quotient{\Gamma}{\zero}$ is a subset of full $m_\Gamma$-measure, and 
$\widetilde\Omega\subset\zero$ the preimage of $\Omega$ under the projection map $\zero\mapsto \quotient{\Gamma}{\zero}$, the sets
\begin{align*}
E^-&:= \{\xi\in\rand\colon (\xi,\eta')\in \ndpt\widetilde\Omega\ \text{ for }\ \mu^+\text{-almost every }\ \eta'\in\rand\}\quad\text{and}\\
E^+&:= \{\eta\in\rand\colon (\xi',\eta)\in \ndpt\widetilde\Omega\ \text{ for }\ \mu^-\text{-almost every }\ \xi'\in\rand\}
\end{align*}
satisfy $\,\mu_-(E^-)=\mu_-(\rand)\ $ and $\,\mu_+(E^+)=\mu_+(\rand)$. 
\end{corollary}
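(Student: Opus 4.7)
The plan is to apply Fubini twice: first to $\overline m=\overline\mu\otimes\lambda$ on $\ndpt\zero\times\RR\cong[\zero]$ in order to show that $\ndpt\widetilde\Omega$ has full $\overline\mu$-measure in $\ndpt\zero$, and then to the product $\mu_-\otimes\mu_+$ on $\rand\times\rand$ to obtain the slice-wise statements defining $E^\pm$.

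For the first step, note that $\widetilde\Omega$ is $\Gamma$-invariant by construction. Hence the characterizing property~(\ref{chardesmeasure}) of the quotient measure (taking $\widetilde h$ to be the indicator of the Hopf image of $\zero\setminus\widetilde\Omega$) yields $m(\zero\setminus\widetilde\Omega)=0$. Combining this with the definition~(\ref{defstrongRicks}) of Ricks' measure together with $\overline m=\overline\mu\otimes\lambda$, one obtains $(\overline\mu\otimes\lambda)\bigl(\Hopf_\xo(\zero\setminus\widetilde\Omega)\bigr)=0$. A first application of Fubini then shows that for $\overline\mu$-almost every $(\xi,\eta)\in\ndpt\zero$ the Lebesgue measure of the vertical slice $\{s\in\RR\colon (\xi,\eta,s)\in\Hopf_\xo(\zero\setminus\widetilde\Omega)\}$ vanishes; in particular, for each such $(\xi,\eta)$ there exists at least one $s\in\RR$ with $(\xi,\eta,s)\in\Hopf_\xo(\widetilde\Omega)$, whence $(\xi,\eta)\in\ndpt\widetilde\Omega$. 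Thus $\ndpt\widetilde\Omega$ is of full $\overline\mu$-measure in $\ndpt\zero$.

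For the second step, Theorem~\ref{currentisproduct} provides both $\overline\mu\sim\mu_-\otimes\mu_+$ on $\ndpt\reg$ and the equality $(\mu_-\otimes\mu_+)(\ndpt\zero)=\mu_-(\rand)\mu_+(\rand)$, so the complement of $\ndpt\widetilde\Omega$ in $\rand\times\rand$ is $(\mu_-\otimes\mu_+)$-null. A second application of Fubini, now on $\rand\times\rand$, then gives that for $\mu_-$-almost every $\xi\in\rand$ the slice $\{\eta\in\rand\colon(\xi,\eta)\in\ndpt\widetilde\Omega\}$ has full $\mu_+$-measure, which is precisely the condition $\xi\in E^-$. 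Hence $\mu_-(E^-)=\mu_-(\rand)$, and the statement for $E^+$ follows by exchanging the roles of the two boundary factors.

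The proof is thus essentially a double Fubini and no serious obstacle arises. The only points that deserve a bit of care are the passage from $m_\Gamma$ on the quotient $\quotient{\Gamma}{\zero}$ to $m$ on the cover $\zero$ (which is standard once the $\Gamma$-invariance of $\widetilde\Omega$ is noted) and the invocation of Theorem~\ref{currentisproduct}, which is what makes it legitimate to replace the geodesic current $\overline\mu$ by the product $\mu_-\otimes\mu_+$ when reading the Fubini statement on $\rand\times\rand$ rather than on the a priori smaller set $\ndpt\zero$.
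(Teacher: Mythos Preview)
Your proposal is correct and matches the paper's approach: the paper simply states that the corollary ``is immediate by Fubini's theorem'' and gives no further proof, and you have spelled out precisely the two Fubini applications (first to $\overline m=\overline\mu\otimes\lambda$ to pass from $\widetilde\Omega$ to $\ndpt\widetilde\Omega$, then to $\mu_-\otimes\mu_+$ via Theorem~\ref{currentisproduct}) that this one-line remark is hiding.
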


Before proving the important Lemma~\ref{lem:fundwith0measureboundary} 
we want to recall a few notions from topology and geometric group theory: If $Y$ is a topological space, then a collection of subsets of $Y$ is said to be {\hl locally finite}  if every $y\in Y$ has an open neighborhood that intersects only finitely many sets in the collection. Notice that if 
the collection $\{U_\lambda\colon \lambda\in\Lambda\}\subset Y$ (with $\Lambda$ a countable set) is locally finite, then the collection of the closures $\{\overline{U_\lambda}\colon \lambda\in\Lambda\}\subset Y$ is also locally finite. Moreover, for the closure of the countable union $\bigcup_{\lambda\in\Lambda} U_\lambda$ we have 
 \begin{equation}\label{genclosureofinfiniteunion} \overline{{\textstyle \bigcup_{\lambda\in\Lambda}} U_\lambda}= {\textstyle \bigcup_{\lambda\in\Lambda}} \overline{ U_\lambda}.\end{equation}
 Indeed, if $(y_n)\subset {\textstyle\bigcup_{\lambda\in\Lambda}} U_\lambda$ is a sequence converging to a point $y\in Y$, we let $U\subset Y$ be an open neighborhood of $y$ such that $U\cap U_\lambda=\emptyset$ for all but finitely many $\lambda\in\Lambda$; denote the finite set of exceptions by  
 \[F:=\{\lambda\in\Lambda\colon U\cap U_\lambda\ne \emptyset\}.\]  
 Then for $n$ sufficiently large we have
 \begin{align*}
  y_n & \in U\cap {\textstyle \bigcup_{\lambda\in\Lambda}} U_\lambda\subset {\textstyle \bigcup_{\lambda\in F}} U_\lambda,\quad\text{hence }\ \ 
  y  \in \overline{{\textstyle \bigcup_{\lambda\in F}} U_\lambda}={\textstyle \bigcup_{\lambda\in F}} \overline{U_\lambda}
  \subset {\textstyle \bigcup_{\lambda\in\Lambda}}  \overline{ U_\lambda}.
\end{align*}
 The converse inclusion is trivial.

Assume now that a discrete group $G$ acts by isometries on a proper metric 
space $Y$. An open set $D\subset Y$ is called a {\hl fundamental domain} for the action of $G$ on $Y$, if 
\[Y=\bigcup_{g\in G} g\cdot \overline {D},\quad\text{and }\  \,gD\cap D=\emptyset\quad\text{for all }\ g\in G\setminus\{e\};\] 
it is said to be locally finite (for the action of $G$ on $Y$) if the collection of sets $\{g\cdot D\colon g\in G\}$ is locally finite; notice that this is equivalent to the fact that for any compact set $K\subset Y$ the number 
\[ \#\{g\in G \colon K\cap g\cdot \overline{D}\ne\emptyset\}\]
is finite.


%
%
%
For our purposes we will  need a 
fundamental domain for the action of $\Gamma$ on 
$\SX$ whose boundary is negligable with respect to the measure $m$ on $\SX$ inducing the Ricks' measure on $\quotient{\Gamma}{\SX}$ (which is defined by (\ref{measureformula})):
\begin{lemma}\label{lem:fundwith0measureboundary}
 Let $\XX$, $\Gamma<\is(\XX)$, $\mu_-$, $\mu_+$, $\overline\mu$ and $\,\Delta<\infty$ as in Theorem~\ref{propertiesofRicks}. 
Then there exists a $\Gamma$-invariant subset $\SX'\subset\SX$ of full $m$-measure and a locally finite 
 fundamental domain $\mathcal{D}\subset \SX'$ for the action of $\,\Gamma$ on $\SX'$  which satisfies  $m(\partial\mathcal{D})=0$. 
\end{lemma}
\begin{proof}
We denote 
\[\mathcal{F}:= \{v\in\SX\colon \gamma v=v \ \text{ for some}\ \gamma\in\Gamma\setminus\{e\}\}\]
the set of parametrized geodesics in $\SX$ which are fixed by a non-trivial element in $\Gamma$. Notice that this set is non-empty only if $\Gamma$ contains elliptic elements. 

Obviously $\mathcal{F}$ is closed, $\Gamma$-invariant and invariant by the geodesic flow. Moreover, $\mathcal{F}\cap \zero$ is a proper subset of the support of $m$. 
By ergodicity of $m_\Gamma$ we conclude that  
$m(\mathcal{F})=0$. 

Choose a point $x\in\XX$ with trivial stabilizer in $\Gamma$. Let  ${\mathcal D}_\Gamma\subset\SX $ denote the open {\hl Dirichlet domain} for $\Gamma$ with center $x$, that is the set of all parametrized geodesic lines with origin in 
\[ \{z\in\XX\colon d(z,x)< d(z,\gamma x)\ \text{ for all}\ \gamma\in\Gamma\setminus\{e\}\};\]
then by choice of $x$ we have 
\[ \gamma {\mathcal D}_\Gamma\cap {\mathcal D}_\Gamma=\emptyset\ \text{ for all}\  \gamma\in\Gamma\setminus\{e\}.\] 
Moreover, ${\mathcal D}_\Gamma$ is locally finite as $\XX$ is proper and $\Gamma $ is discrete. 
Notice that in general ${\mathcal D}_\Gamma$ need  not be a fundamental domain for the action of $\Gamma$ on $\SX$, because   
\[{\textstyle\bigcup_{\gamma\in\Gamma}} \gamma \overline{{\mathcal D}_\Gamma}\subsetneq \SX\]
is possible as the following example provided by the anonymous referee shows: If $\XX$ is the universal cover of a bouquet of circles of length $1$ (that is a regular tree),  $\Gamma<\is(\XX)$ the group of deck transformations (which does not contain elliptic elements) and $x\in\XX$ the midpoint of an edge $E$ of $\XX$, then the closure of the Dirichlet domain ${\mathcal D}_\Gamma\subset\SX $ with center $x$ consists of all parametrized geodesics $v$ with origin $v(0)\in\overline{ E}$  and $E\subset v(\RR)$. But if  $w\in\SX$ is a parametrized geodesic with $w(\RR)\cap \overline{E}=\{w(0)\}$, then $w\notin \bigcup_{\gamma\in\Gamma} \gamma \overline{{\mathcal D}_\Gamma}$.  

For this reason we consider the ``enlarged boundary"
\begin{align*}
 \widetilde \partial {\mathcal D}_\Gamma =\{v\in\SX\colon & d(v(0),x)=d(v(0),\gamma x) \ \text{ for some}\ \gamma\in\Gamma\setminus\{e\}\\
 &\hspace*{4mm} \text{and }\ d(v(0),x)\le d(v(0),\gamma x) \ \text{ for all}\ \gamma\in\Gamma\}\supset \partial {\mathcal D}_\Gamma\end{align*}
and use the set 
\[\widehat {  {\mathcal D}_\Gamma} :={\mathcal D}_\Gamma \cup \widetilde\partial {\mathcal D}_\Gamma\]
instead of the closure $\overline{ {\mathcal D}_\Gamma}$ of the Dirichlet domain. Then obviously 
\[ {\textstyle \bigcup_{\gamma\in\Gamma} } \gamma \widehat{{\mathcal D}_\Gamma}= \SX,\quad\text{and }\ \ {\mathcal F}\cap\widehat{{\mathcal D}_\Gamma}\subset\widetilde \partial {\mathcal D}_\Gamma.\]

However, the problem is that in general the boundary $\partial {\mathcal D}_\Gamma$ of the Dirichlet domain (and also the enlarged boundary $\widetilde \partial {\mathcal D}_\Gamma$) is very complicated, and in particular $m(\widetilde \partial {\mathcal D}_\Gamma)\ge m( \partial {\mathcal D}_\Gamma) >0$ is  possible.  

In order to  get a fundamental domain with boundary of zero 
$m$-measure 
 we will therefore modify the Dirichlet domain ${\mathcal D}_\Gamma$ in a neighborhood of the enlarged boundary $\widetilde \partial {\mathcal D}_\Gamma$ as proposed by T.~Roblin (\cite[p.~13]{MR2057305}): We first choose a covering of $\widetilde \partial {\mathcal D}_\Gamma \setminus \mathcal{F} $ by a locally finite 
family of open sets 
$\{V_n\colon n\in\NN\}\subset \SX\setminus \mathcal{F}$ with a uniform upper  bound on the  diameter with respect to the distance function $d_1$ introduced in (\ref{metriconSX}) such that for all $n\in\NN$ 
we have 
\[ m(\partial V_n)=0\quad\text{and }\ \overline{V_n}\cap \gamma\overline{V_n}=\emptyset\quad\text{for all } \ \gamma\in\Gamma\setminus\{e\}.\] 

We first claim that the family of subsets 
$\{\Gamma\cdot V_n\colon n\in\NN\}\subset \SX\setminus \mathcal{F}$ is still locally finite.  
For the proof we choose for each $j\in\NN$ a point $v_j\in V_j\cap\widetilde\partial{\mathcal{D}_\Gamma}$; there exists $r>0$ such that $V_j\subset B_r(v_j)$ for all $j\in\NN$. Since the map $p:\SX\to\XX,\ v\mapsto v(0)\ $ is $1$-Lipschitz, we also have $pV_j\subset B_r(pv_j)$ for all $j\in\NN$. Moreover, 
$v_j\in \widetilde\partial{\mathcal{D}_\Gamma}$ implies that $d(pv_j,\gamma x)\ge d(pv_j,x)$ for all $\gamma \in\Gamma$.

Now assume that the family $\{\Gamma\cdot V_n\colon n\in\NN\}\subset \SX\setminus \mathcal{F}$ is not locally finite. Then there exists an open set $U\subset \SX\setminus \mathcal{F}$ and  infinite sets $\{ \gamma_k\colon k\in\NN\}\subset \Gamma$, $\{j_k\colon k\in\NN\}\subset \NN$ such that  $U\cap \gamma_k V_{j_k}\ne \emptyset$ for all $k\in\NN$.
Let $R>0$  such that $p U\subset B_R(x)$, where $x$ is the center of the Dirichlet domain. For  $k\in\NN$ we pick $u_k\in   U\cap \gamma_k V_{j_k}$; passing to a subsequence if necessary we can assume that $(u_k)$ converges to a point $u\in \overline{U}$. Since $\Gamma$ is discrete and $\{ \gamma_k\colon k\in\NN\}$ is infinite, we know that $d(x,\gamma_k^{-1} p u)\to\infty$ as $k\to\infty$, hence for all $k$ sufficiently large  we have
$\gamma_k^{-1} p u_k\notin \overline{B_{R+2r}(x)}$. 

Let $k\in \NN$ such that $ d(x,\gamma_k^{-1} p u_k)>R+2r$. 
Notice that  $u_{k}\in \gamma_k V_{j_k}\subset \gamma_k B_r(v_{j_k})$ implies $d(\gamma_k^{-1} pu_k,  p v_ {j_k})<r$ and hence 
 \[ d(x, p v_{j_k} )\ge d( x, \gamma_k^{-1} p u_k )- d( p v_ {j_k},\gamma_k^{-1} p u_k)  >R+2r-r=R+r.\]
By choice of $v_{j_k}\subset 
\widetilde\partial{\mathcal{D}_\Gamma}$ we further know that $d(pv_{j_k},\gamma x)\ge d(pv_{j_k},x)$ for all $\gamma \in\Gamma$, hence in particular
\[ d(x,\gamma_k pv_{j_k}) \ge  d(x,pv_{j_k}) >R+r,\]
and therefore 
\begin{align*} 
d(x, p u_k) & \ge d(x, \gamma_k p v_{j_k})- d( pu_k, \gamma_k p v_{j_k})>
d(x, p v_{j_k})-r
>R;\end{align*}
this is an obvious contradiction to  $p U\subset  B_{R}(x)$.

We are now going to construct the desired fundamental domain. We start with the Dirichlet domain ${\mathcal D}_0:={\mathcal D}_\Gamma\,$ from above and   set ${\mathcal D}_1:=\bigl({\mathcal D}_{0}\setminus \Gamma \cdot \overline{V_1}\bigr)\cup V_1\subset\SX\setminus \mathcal{F}$. This set is open as a union of two open sets, and it is still locally finite  for the action of $\Gamma$ on $\SX\setminus \mathcal{F}$; obviously we have $\gamma \cdot {\mathcal D}_1\cap {\mathcal D}_1=\emptyset$ for all $\gamma\in\Gamma\setminus\{e\}$. Hence  defining ${\mathcal D}_n:=\bigl({\mathcal D}_{n-1}\setminus \Gamma\cdot \overline{V_n}\bigr)\cup V_n$ for $n\in \NN$, we get a sequence  of open subsets of $\SX\setminus \mathcal{F}$ each of which is locally finite for the action of $\Gamma$ on $\SX\setminus \mathcal{F}$. The limit of this sequence exists and equals 
 \[ {\mathcal D}=\bigl({\mathcal D}_0\setminus \cup_{i=1}^\infty \Gamma\cdot\overline{V_i}\bigr)\sqcup {\textstyle \bigcup}_{j=1}^\infty \bigl(V_j\setminus \cup_{i>j} \Gamma\cdot\overline{V_i}\bigr).\] 
We claim that ${\mathcal D}$  is a locally finite fundamental domain  for the action of $\Gamma$ on $\SX\setminus \mathcal{F}$, but now with boundary $\partial{\mathcal D}$  of $m$-measure zero as it is contained in 
\[ {\textstyle \bigcup_{j=1}^\infty} \Gamma\cdot \partial V_j \cup \mathcal{F}.\]

We first show that $\SX\setminus \mathcal{F}\subset\Gamma\cdot\overline{\mathcal D}$: So let $v\in\SX\setminus \mathcal{F}$ arbitrary. As $\bigcup_{\gamma\in\Gamma} \gamma\cdot \widehat{ {\mathcal D}_0}=\SX$ we may assume without loss of generality that $v\in\widehat{{\mathcal D}_0}$. For $v\in {\mathcal D}_0\setminus \cup_{i=1}^\infty \Gamma\cdot \overline{V_i}\subset{\mathcal D}\,$ 
we are done, so let 
\[\displaystyle v\in\widetilde\partial {\mathcal D}_0\cup{\textstyle \bigcup_{i=1}^\infty }\Gamma\cdot \overline{V_i}\subset {\textstyle \bigcup_{i=1}^\infty }\Gamma\cdot \overline{V_i}\] 
(as $\{V_n\colon n\in\NN\}$ is an open covering of $\widetilde\partial  {\mathcal D}_0\setminus \mathcal{F}$).
Let $\ell\in\NN$ be the largest integer such that $v\in \Gamma\cdot \overline{V_\ell}$; such $\ell$ exists by local finiteness of the family $\{\Gamma\cdot \overline{V_n}\colon n\in\NN\}$. Hence for some $\gamma\in\Gamma$ we have $v\in \gamma  \overline{V_\ell}\setminus \cup_{i>\ell} \Gamma\cdot \overline{V_i}\subset  \gamma \overline{\mathcal D}$, which proves the claim.

We next show that $\mathcal{D}$ is locally finite for the action of $\Gamma$ on $\SX\setminus\mathcal{F}$. Notice that $\mathcal{D}\subset {\mathcal D_0}\cup {\textstyle \bigcup_{j=1}^\infty} V_j$; as ${\mathcal D_0}$ is locally finite   it suffices to prove that the collection of sets $\{\gamma\cdot \textstyle \bigcup_{j=1}^\infty V_j\colon\gamma\in\Gamma\}\subset \SX\setminus\mathcal{F}$ is locally finite. But this follows directly from the local finiteness of the family of sets $\{\Gamma\cdot V_n\colon n\in\NN\}\subset \SX\setminus\mathcal{F}$. 


We finally show that $\partial{ \mathcal D}\subset \bigcup_{j=1}^\infty \Gamma\cdot \partial V_j \cup \mathcal{F}$. As 
\[\partial{ \mathcal D}\subset \partial \bigl({\mathcal D}_0\setminus \cup_{i=1}^\infty \Gamma\cdot \overline{V_i}\bigr) \cup \partial \Bigl({\textstyle \bigcup_{j=1}^\infty }\bigl(V_j\setminus \cup_{i>j} \Gamma\cdot \overline{V_i}\bigr)\Bigr)\cup \mathcal{F},\] 
the claim will follow from the inclusion
\[ \partial \bigl({\textstyle \bigcup_{i=1}^\infty }\Gamma\cdot V_i\bigr)\cap \SX\setminus\mathcal{F}\subset {\textstyle \bigcup_{i=1}^\infty }\Gamma\cdot \partial V_i .\] 
But $v\in \partial  \bigl({\textstyle \bigcup_{i=1}^\infty }\Gamma\cdot V_i\bigr) \cap \SX \setminus \mathcal{F}$ implies $v\notin \bigcup_{i=1}^\infty \Gamma\cdot V_i$ and 
\[v\in \overline{ {\textstyle \bigcup_{i=1}^\infty }\Gamma \cdot V_i} \cap \SX \setminus \mathcal{F} \subset  {\textstyle \bigcup_{i=1}^\infty }\Gamma \cdot \overline{V_i}\] 
according to (\ref{genclosureofinfiniteunion}), 
hence the assertion is true.\\[-4mm]
\end{proof}

\section{Mixing of the Ricks' measure}\label{Mixing}

Let $\XX$ be a proper Hadamard space as before, and  $\Gamma<\is(\XX)$ a discrete rank one group with $\zero_\Gamma\ne \emptyset$. Notice that if $\XX$ is geodesically complete, then according to Proposition~1 in \cite{LinkHTS} the latter condition is automatically satisfied. We further fix a point $\xo\in\XX$. 

From here on we will assume that 
$\mu_-$, $\mu_+$ are non-atomic, finite Borel measures on $\rand$ with 
$\mu_\pm (\radlim)=\mu_\pm(\rand)$. 
We will further require that for the quasi-product geodesic current $\overline\mu\sim \bigl(\mu_-\otimes \mu_+\bigr)\ein_{\lower 0.4ex\hbox{$\scriptstyle\ndpt\reg$}}$ on $\ndpt\reg$ the constant $\Delta$ defined in (\ref{Deltadef}) is finite. 

From Theorem~\ref{currentisproduct} and Definition~\ref{posnegrec} we immediately get that the  set
\[ 
 \{ u\in\quotient{\Gamma}{\SX} \colon  u\ \text{is positively and negatively recurrent}\}\]
has full $m_\Gamma$-measure (which is equivalent to conservativity of the dynamical system $\bigl(\ndpt\SX ,g_\Gamma, \overline\mu\bigr)$). Moreover, according to Theorem~\ref{propertiesofRicks} the dynamical system  $\bigl(\ndpt\SX ,g_\Gamma, \overline\mu\bigr)$ is ergodic and we can use its Corollary~\ref{useFubini}. 


Our proof of mixing will closely follow M.~Babillot's idea from \cite{MR1910932}.  However, as she only gives the proof for cocompact rank one isometry groups of Hadamard {\hl manifolds}, for the convenience of the reader we want to give a detailed proof  in our more general setting, which includes arbitrary discrete rank one isometry groups of non-Riemannian Hadamard spaces. We also emphasize that her set $\reg$ in \cite{MR1910932} is defined as the set of unit tangent vectors $v\in S\XX\cong\SX$ which do not admit a parallel perpendicular Jacobi field;  
this is in general a proper open subset of our set $\reg$ (which was defined as the set of  parametrized geodesic lines with finite width) which is contained in $\zero$. 
In particular, her Proposition-Definition below\break \cite[Lemma~2]{MR1910932} is not true when considering our set $\reg$ instead of hers.  We therefore have to work on the set  $\zero$ (which is not open in $\reg $) and use -- up to a constant factor -- the 
cross-ratio introduced by R.~Ricks in \cite[Definition~10.2]{Ricks} instead of Babillot's. 

From the Busemann function introduced in (\ref{buseman}) we first define 
for $(\xi,\eta)\in\ndpt\SX$ the {\hl Gromov product} of $(\xi,\eta)$ 
with respect to $y\in \XX$ via
\begin{equation}\label{GromovProd}
\Gr_y(\xi,\eta)=\frac12\bigl(\bs_\xi(y,z)+\bs_\eta(y,z)\bigr), 
\end{equation}
where $z\in (\xi\eta)$ is an arbitrary point on 
a geodesic  line joining $\xi$ and $\eta$. 
It is related to R.~Ricks' definition following \cite[Lemma~5.1]{Ricks} via the formula \\
$ \Gr_y(\xi,\eta)=-2 \beta_y(\xi,\eta)$ for all $(\xi,\eta)\in\ndpt\SX$.
We then make the following
\begin{definition}[Definition~10.1  in \cite{Ricks}]\label{Quadrilateraldef}
A quadrupel of points $(\xi_1,\xi_2,\xi_3,\xi_4)\in\bigl(\rand\bigr)^4$ is called a {\hl quadrilateral}, if there exist $v_{13}$,  $v_{14}$, $v_{23}$, $v_{24} \in\reg$ \st
\[ \ndpt v_{ij} =(\xi_i,\xi_j)\quad\text{for all }\ (i,j)\in\{(1,3), (1,4), (2,3), (2,4)\}.\] 
The set of all quadrilaterals is denoted $\mathcal{Q}$, and we define
\[ \mathcal{Q}_\Gamma= \mathcal{Q}\cap \bigl(\Lim\bigr)^4.\]
\end{definition}
\begin{definition}[compare Definition~10.2 in \cite{Ricks}]\label{Crossratiodef}\ \\ 
For a quadrilateral $(\xi,\xi',\eta,\eta')\in\mathcal{Q}$ we define its {\hd cross-ratio} by
\[ \Cr(\xi,\xi',\eta,\eta')=\Gr_\xo(\xi,\eta)+\Gr_\xo(\xi',\eta')- \Gr_\xo(\xi,\eta')-\Gr_\xo(\xi',\eta).\]
\end{definition}
Notice that our definition corresponds to Ricks' via
\[ \Cr(\xi,\xi',\eta,\eta')=- 2\mathrm{B}(\xi,\xi',\eta,\eta').\]
The properties of a cross-ratio listed in Proposition~10.5 of \cite{Ricks} are therefore satisfied for our cross-ratio $\Cr$. We further have 
\begin{lemma}[Lemma~10.6 in \cite{Ricks}]
If $g\in\is(\XX)$ is axial, then its translation length $\ell(g)$ is given by
\[ \ell(g)=\Cr(g^-,g^+,\xi,g\xi).\]
\end{lemma}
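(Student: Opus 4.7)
The plan is to compute $\Cr(g^-, g^+, \xi, g\xi)$ directly from Definition~\ref{Crossratiodef}, show that all dependence on $\xi$ cancels, and identify the remainder with $\ell(g)$. Expanding the cross-ratio yields
\[ \Cr(g^-, g^+, \xi, g\xi) = \Gr_\xo(g^-, \xi) + \Gr_\xo(g^+, g\xi) - \Gr_\xo(g^-, g\xi) - \Gr_\xo(g^+, \xi). \]
The key first move is to exploit that $g^+$ and $g^-$ are fixed by $g$. By isometry invariance of the Busemann function one has the general identity $\Gr_{gy}(g\alpha, g\beta) = \Gr_y(\alpha, \beta)$, so the two terms involving $g\xi$ rewrite as $\Gr_\xo(g^\pm, g\xi) = \Gr_{g^{-1}\xo}(g^\pm, \xi)$. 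What is left is a sum of two differences of the form $\Gr_\xo - \Gr_{g^{-1}\xo}$, evaluated on the boundary pairs $(g^-, \xi)$ and $(g^+, \xi)$.

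The second step is to apply the cocycle identity~(\ref{cocycle}) to each such difference, which produces the general formula
\[ \Gr_y(\alpha, \beta) - \Gr_{y'}(\alpha, \beta) = \tfrac{1}{2}\bigl(\bs_\alpha(y, y') + \bs_\beta(y, y')\bigr). \]
From this it is visible that the two $\bs_\xi(\xo, g^{-1}\xo)$ contributions occur with opposite signs and cancel. A final application of isometry invariance $\bs_{g^\pm}(gx, gy) = \bs_{g^\pm}(x, y)$ converts the surviving Busemann terms into ones based at $\xo$ and $g\xo$, leading to
\[ \Cr(g^-, g^+, \xi, g\xi) = \tfrac{1}{2}\bigl(\bs_{g^+}(\xo, g\xo) - \bs_{g^-}(\xo, g\xo)\bigr). \]

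It remains to identify the right-hand side with $\ell(g)$. Fixing an invariant geodesic $v \in \SX$ of $g$, so that $v^\pm = g^\pm$ and $gv(0) = v(\ell(g))$, the definition of the Busemann function immediately gives $\bs_{g^+}(v(0), gv(0)) = \ell(g)$ and $\bs_{g^-}(v(0), gv(0)) = -\ell(g)$. For an arbitrary $\xo \in \XX$ the cocycle identity splits $\bs_{g^\pm}(\xo, g\xo)$ through $v(0)$ and $gv(0)$, and the two terms involving $\xo$ then cancel in view of $\bs_{g^\pm}(gv(0), g\xo) = \bs_{g^\pm}(v(0), \xo)$; this shows $\bs_{g^\pm}(\xo, g\xo) = \pm\, \ell(g)$ independently of $\xo$, and the claim follows. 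The principal obstacle throughout is purely sign bookkeeping, in particular tracking when $g$ acts on space points versus on boundary points; once one notices that all $\xi$-dependence is killed by the cocycle identity, the remainder of the computation is essentially forced.
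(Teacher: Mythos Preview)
Your proof is correct. The paper itself does not supply a proof of this lemma---it simply cites Lemma~10.6 of \cite{Ricks}---so there is no in-paper argument to compare against. Your direct Busemann-function computation, exploiting isometry invariance and the cocycle identity to cancel the $\xi$-dependence and reduce to $\tfrac12\bigl(\bs_{g^+}(\xo,g\xo)-\bs_{g^-}(\xo,g\xo)\bigr)=\ell(g)$, is exactly the standard route and matches Ricks' original argument.
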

From this we immedately get the following
\begin{proposition}\label{lengthsubsetcr}
The length spectrum $ \{\ell(\gamma)\colon \gamma\in\Gamma\}$ of $\Gamma$ is a subset of the cross-ratio spectrum $\Cr(\mathcal{Q}_\Gamma)$.
\end{proposition}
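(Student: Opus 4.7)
The proposition says that for every axial $\gamma \in \Gamma$ (the elements that contribute to the length spectrum), the translation length $\ell(\gamma)$ can be realized as the cross-ratio of some quadrilateral with all four vertices in $\Lim$. Since the preceding lemma already gives the identity $\ell(\gamma) = \Cr(\gamma^-,\gamma^+,\xi,\gamma\xi)$ for any admissible $\xi$, the entire task is to exhibit a single $\xi\in\rand$ for which the quadruple $(\gamma^-,\gamma^+,\xi,\gamma\xi)$ genuinely lies in $\mathcal{Q}_\Gamma$, i.e.\ the four vertices are in $\Lim$ and the four required rank one connecting geodesics exist.

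First I would reduce to the case where $\gamma\in\Gamma$ is axial, since otherwise $\ell(\gamma)=0$ is either excluded from the length spectrum by convention or else trivially realized (any degenerate quadrilateral with $\xi_1=\xi_2$ has cross-ratio $0$, so the statement is vacuous there). For an axial $\gamma$, I recall that the fixed points $\gamma^\pm$ lie in $\Lim$ (they are limits of $\gamma^n\xo$ and $\gamma^{-n}\xo$); moreover, because $\Gamma$ is a rank one group it contains a rank one element independent of $\gamma$, so $\Lim$ is infinite and in particular
\[ \Lim\setminus\{\gamma^+,\gamma^-\}\neq\emptyset.\]
I then pick any $\xi$ in this set; since the only fixed points of a rank one isometry are $\{\gamma^+,\gamma^-\}$ (as recorded right after Definition~\ref{hypaxiso}) we automatically get $\gamma\xi\neq\xi$ and $\gamma\xi\in\Lim\setminus\{\gamma^+,\gamma^-\}$ by $\Gamma$-invariance of $\Lim$.

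Next I would check the four connecting geodesics required by Definition~\ref{Quadrilateraldef}. With the labelling $\xi_1=\gamma^-$, $\xi_2=\gamma^+$, $\xi_3=\xi$, $\xi_4=\gamma\xi$, each of the pairs
\[ (\xi_1,\xi_3),\ (\xi_1,\xi_4),\ (\xi_2,\xi_3),\ (\xi_2,\xi_4) \]
contains one of the rank one fixed points $\gamma^\pm$. The extension of Ballmann's north–south dynamics in Lemma~\ref{dynrankone}(a) then yields, for each such pair, a rank one geodesic joining the two endpoints; note that the rank one isometries $\gamma$ and $\gamma^{-1}$ supply rank one geodesics ending at $\gamma^+$ and $\gamma^-$ respectively, and the choice of $\xi$ guarantees that in each pair the second endpoint is distinct from the relevant fixed point. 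Hence $(\gamma^-,\gamma^+,\xi,\gamma\xi)\in\mathcal{Q}_\Gamma$.

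Finally I would invoke the Ricks identity $\ell(\gamma)=\Cr(\gamma^-,\gamma^+,\xi,\gamma\xi)$ from the preceding lemma; since the right-hand side lies in $\Cr(\mathcal{Q}_\Gamma)$ by what has just been verified, the inclusion $\{\ell(\gamma):\gamma\in\Gamma\}\subset\Cr(\mathcal{Q}_\Gamma)$ follows. The only conceptual obstacle is the simultaneous requirement ``four vertices in $\Lim$, four rank one connecting geodesics'', and this is precisely what is bought by combining Lemma~\ref{dynrankone}(a) (which handles the connectivity whenever at least one endpoint is a rank one fixed point) with the fact that $\gamma^\pm\in\Lim$ and $\Lim$ is infinite; no genuine obstacle remains.
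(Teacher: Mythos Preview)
Your overall strategy --- use the preceding lemma and exhibit a single $\xi\in\Lim$ for which $(\gamma^-,\gamma^+,\xi,\gamma\xi)\in\mathcal{Q}_\Gamma$ --- is exactly what the paper has in mind (the paper offers nothing beyond ``immediately''). For \emph{rank one} $\gamma$ your argument is complete.

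There is, however, a genuine gap. You reduce only to \emph{axial} $\gamma$, but then invoke Lemma~\ref{dynrankone}(a) with $h=\gamma$ and the statement that the only boundary fixed points of $\gamma$ are $\gamma^\pm$; both of these require $\gamma$ to be \emph{rank one}, not merely axial. The length spectrum records the translation lengths of \emph{all} axial elements of $\Gamma$, and in a rank one group an individual axial element need not itself be rank one. For such $\gamma$ an arbitrary $\xi\in\Lim\setminus\{\gamma^\pm\}$ may fail to admit rank one geodesics to $\gamma^\pm$, and $\gamma$ may well fix $\xi$.

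The repair is to choose $\xi$ more carefully: take $\xi=h^+$ for some rank one $h\in\Gamma$ with $\gamma h^+\neq h^+$. Then $\gamma\xi=(\gamma h\gamma^{-1})^+$ is again the attractor of a rank one isometry in $\Gamma$, and Lemma~\ref{dynrankone}(a) applied to $h$ and to $\gamma h\gamma^{-1}$ (rather than to $\gamma$) furnishes all four required rank one geodesics joining $\gamma^\pm$ to $\xi$ and to $\gamma\xi$; automatically $\xi,\gamma\xi\in\Lim\setminus\{\gamma^\pm\}$. Such $h$ exists: if $\gamma$ fixed $h^+$ for \emph{every} rank one $h\in\Gamma$, then (replacing $h$ by $h^{-1}$) it would fix both endpoints of a rank one axis, hence preserve that finite-width strip $(h^-h^+)$, and being axial it would translate along it --- making $\gamma$ itself rank one, in which case your original argument already applies. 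With this amended choice of $\xi$ the rest of your proof goes through unchanged.
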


\begin{theorem}\label{mixthm}
Let $\Gamma<\is(\XX)$ be a discrete rank one group with  non-arithmetic length spectrum and $\zero_\Gamma\ne\emptyset$.  Let $\mu_-$, $\mu_+$ be non-atomic finite Borel measures on $\rand$ with 
$\mu_\pm(\radlim)=\mu_\pm(\rand)$, and   
\[\overline\mu\sim 
(\mu_-\otimes\mu_+)\ein_{\lower 0.4ex\hbox{$\scriptstyle\ndpt\reg$}}\]
 a quasi-product geodesic current defined on $\ndpt\reg$ for 
which the constant $\Delta$ defined by~(\ref{Deltadef}) 
is finite. 
Let $m_\Gamma$ be the associated Ricks' measure  on $\quotient{\Gamma}{ \SX}$. 
Then the dynamical system $(\quotient{\Gamma}{ \SX}, g_\Gamma, m_\Gamma)$ is mixing, that is for all Borel sets $A,B\subset\quotient{\Gamma}{ \SX}$ with $m_\Gamma(A)$ and $m_\Gamma(B)$ finite  we have (with the abbreviation $\Vert m_\Gamma\Vert =m_\Gamma\big(\quotient{\Gamma}{ \SX}\bigr)$)
\[ \lim_{t\to\pm \infty} m_\Gamma(A\cap g_\Gamma^{-t} B)=\left\{\begin{array} {cl}\displaystyle \frac{m_\Gamma(A)\cdot m_\Gamma(B)}{\Vert m_\Gamma\Vert} & \text{ if } \ m_\Gamma  \text{ is finite},\\[3mm]
0 & \text{ if } \ m_\Gamma  \text{ is infinite}.\end{array}\right.\]
\end{theorem}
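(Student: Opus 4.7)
The plan is to follow M.~Babillot's strategy from \cite{MR1910932}, adapted to the non-Riemannian CAT$(0)$ setting and working on the subset $\zero$ rather than on $\reg$: assume for contradiction that mixing fails, extract from the correlation defect a non-trivial $L^\infty$ eigenfunction of the geodesic flow, prove that it is essentially invariant along the strong stable and strong unstable foliations, translate this invariance into an identity for the cross-ratio of quadrilaterals, and finally invoke Proposition~\ref{lengthsubsetcr} to contradict non-arithmeticity of the length spectrum.

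\textbf{Step 1: eigenfunction.} Suppose mixing fails. By density of compactly supported continuous functions and the ergodicity/conservativity provided by Theorem~\ref{propertiesofRicks}, one can find $F,G\in C_c(\quotient{\Gamma}{\SX})$ and a sequence $t_n\to+\infty$ for which $\int F\cdot(G\circ g_\Gamma^{t_n})\,\mathrm{d}m_\Gamma$ does not approach the expected limit. Lifting via \eqref{measureformula} and the Hopf parametrization \eqref{HopfPar}, the correlation becomes a double integral on $\ndpt\zero\times\RR$ against $\overline\mu\otimes\lambda$; a weak-$\ast$/diagonal extraction argument (exactly as in \cite[Section~2.2]{MR1910932}) then produces a bounded measurable $\varphi:\quotient{\Gamma}{\SX}\to\CC$ and a real number $a\neq 0$ with $\varphi\circ g_\Gamma^t=\mathrm{e}^{\mathrm{i}at}\varphi$ almost everywhere.

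\textbf{Step 2: invariance along foliations.} The main analytic input is Lemma~\ref{KniepersProp}: if $u\in\zero$ is $\is(\XX)$-recurrent and $v\in\SX$ satisfies $v^+=u^+$ together with $\bs_{v^+}(v(0),u(0))=0$, then $d_1(g^tv,g^tu)\to 0$. Theorem~\ref{currentisproduct} guarantees that $\zero_\Gamma^{\mathrm{rec}}$ carries full $\overline m$-measure, and Corollary~\ref{useFubini} guarantees that the set of boundary points through which recurrent zero-width geodesics pass has full $\mu_\pm$-measure in either coordinate. An $\varepsilon$/$\delta$ estimate applied to the eigenfunction equation along pairs of orbits that become asymptotic shows that $\varphi$ is essentially invariant along the strong stable foliation in $[\zero]$; the identical argument applied to $-v$ yields invariance along strong unstable leaves. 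Combined with ergodicity of $(\ndpt\SX,\Gamma,\overline\mu)$ and a Fubini argument, one obtains in Hopf coordinates a representation of the form $\varphi([v])=\mathrm{e}^{\mathrm{i}a(s-\Gr_\xo(v^-,v^+))}\,c$ on a full-measure set, for some unimodular constant $c$.

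\textbf{Step 3: cross-ratio rigidity and conclusion.} Evaluating this representation of $\varphi$ on the four corners of a quadrilateral $(\xi,\xi',\eta,\eta')\in\mathcal{Q}_\Gamma$ — using the cocycle identity \eqref{cocycle} to control the shifts in the $\RR$-coordinate under the $\Gamma$-action — produces the identity $\mathrm{e}^{\mathrm{i}a\,\Cr(\xi,\xi',\eta,\eta')}=1$ for almost every quadrilateral; continuity of $\Cr$ and the equality $\supp(\mu_\pm)=\Lim$ propagate the identity to all of $\mathcal{Q}_\Gamma$. Hence $\Cr(\mathcal{Q}_\Gamma)\subset(2\pi/a)\ZZ$, and Proposition~\ref{lengthsubsetcr} forces the length spectrum into the same discrete subgroup of $\RR$, contradicting the hypothesis. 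The infinite-measure case is identical with target constant $0$: any failure of the correlations of finite-measure sets to decay to zero produces an eigenfunction and the same contradiction. The main obstacle is Step~2: Babillot detects strong stable/unstable invariance by means of parallel Jacobi fields, which are unavailable in the non-Riemannian setting; one must replace them with Lemma~\ref{KniepersProp} and work throughout on $\zero$, which is only a Borel (not open) subset of $\reg$, carefully matching the full-measure sets provided by Theorem~\ref{currentisproduct} and Corollary~\ref{useFubini} so that the cross-ratio identity is derived on a rich enough subset of $\mathcal{Q}_\Gamma$ to conclude arithmeticity.
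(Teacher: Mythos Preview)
Your overall architecture is Babillot's, and Steps~2--3 (Lemma~\ref{KniepersProp} on $\zero_\Gamma^{\mathrm{rec}}$, the quadrilateral computation, Proposition~\ref{lengthsubsetcr}) match the paper closely. The gap is in Step~1: failure of mixing does \emph{not} directly produce an $L^\infty$ eigenfunction $\varphi\circ g_\Gamma^t=\mathrm{e}^{\mathrm{i}at}\varphi$. What Babillot's Lemma~1 in \cite{MR1910932} actually yields is only a non-constant $\Psi\in L^2(m_\Gamma)$ arising as a weak limit of $f\circ g_\Gamma^{\pm s_n}$ along a common subsequence $(s_n)$; there is no eigenvalue at this stage, and in general ``not mixing'' does not imply ``not weakly mixing'', so an eigenfunction need not exist a priori. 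Your claim that the extraction is ``exactly as in \cite[Section~2.2]{MR1910932}'' is therefore incorrect.

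The paper follows Babillot faithfully here: it smooths $\Psi$ along the flow to $\widetilde\Psi_\varepsilon$, observes that for $\overline\mu$-a.e.\ $(v^-,v^+)$ the map $h_v:t\mapsto\widetilde\Psi_\varepsilon(g^tv)$ is continuous, and uses ergodicity of $\overline\mu$ (Theorem~\ref{propertiesofRicks}) to conclude that the \emph{period group} of $h_v$ is a single closed subgroup $2a\ZZ$ of $\RR$ (it cannot be all of $\RR$, else $\widetilde\Psi_\varepsilon$ would be flow-invariant, hence constant). The Hopf/quadrilateral argument is then run on $\widetilde\Psi_\varepsilon$ itself --- via the almost-sure Cesaro limits $\widetilde\Psi_\varepsilon^\pm$ and Lemma~\ref{KniepersProp}, essentially as you describe --- to show that traversing a quadrilateral $(\xi,\xi',\eta,\eta')$ returns $v$ to $g^tv$ with $t$ a period, hence $t\in 2a\ZZ$; a direct Busemann computation then gives $t=2\Cr(\xi,\xi',\eta,\eta')$, so $\Cr(\mathcal{Q}_\Gamma)\subset a\ZZ$. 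In short, the period analysis is what \emph{establishes} the discreteness you need; you cannot assume an eigenfunction before that point. If you restructure Step~1 to extract the weak limit $\Psi$ and carry the period argument (rather than an explicit eigenfunction formula) through Step~2, your sketch becomes the paper's proof.
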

%
\begin{proof}
We first remark that mixing is equivalent to the fact that for every square integrable function $\varphi\in\LL^2(m_\Gamma)$ on $\quotient{\Gamma}{\SX}$ the functions $\varphi\circ g_\Gamma^t$ converge weakly in $\LL^2(m_\Gamma)$ to the constant
\[ \frac{1}{\Vert m_\Gamma\Vert }\int \varphi \d m_\Gamma\]
as $t\to\pm\infty$. 
Moreover, since the continuous functions with compact support are dense in $\LL^2(m_\Gamma)$ it suffices to show that for every $f\in \Cnt_c(\quotient{\Gamma}{\SX})$
\[ f \circ g_\Gamma^t \to  \frac{1}{\Vert m_\Gamma\Vert }\int f \d m_\Gamma\]  weakly in $\LL^2(m_\Gamma)$ as $t\to\pm\infty$.

We argue by contradiction and assume that $m_\Gamma $ is not mixing. Then there exists a function $f\in \Cnt_c(\quotient{\Gamma}{\SX})$ (without loss of generality we may assume $\int f\d m_\Gamma=0\,$ if $m_\Gamma$ is finite) and a sequence $(t_n)\nearrow \infty$ \st $f \circ g_\Gamma^{t_n}$ does not converge to $0$ weakly in $\LL^2(m_\Gamma)$ as $n\to\infty$. By   \cite[Lemma~1]{MR1910932} there exists a sequence $(s_n)\nearrow\infty$ and a {\hl non-constant} function $\Psi\in \LL^2(m_\Gamma)$ \st 
\[ f \circ g_\Gamma^{s_n} \to \Psi\quad\text{and }\ f \circ g_\Gamma^{-s_n} \to \Psi\]
weakly in $\LL^2(m_\Gamma)$ as $n\to\infty$. 
Without loss of generality we may assume that $\Psi$ is defined on all of $\quotient{\Gamma}{\SX}$. 
%
Let $\widetilde\Psi:\SX\to\RR$ denote the lift of $\Psi$  to $\SX$ and smooth it
along the flow by considering for $\tau>0$ the function
\[ \widetilde\Psi_\tau:\widetilde \Omega \to\RR,\quad  v\mapsto \int_0^\tau \widetilde \Psi(g^s  v)\d s.\]
For fixed $\varepsilon>0$ sufficiently small $\widetilde\Psi_\varepsilon$ 
is still non-constant,  and now there  exists a set $E''\subset \ndpt \SX$ of full $\overline\mu$-measure \st for all 
$ v\in\ndpt^{-1}E''$ the function 
\[h_{ v}:\RR\to \RR,\quad  t\mapsto \widetilde\Psi_\varepsilon(g^t  v) \] 
is continuous. Notice that  according to Theorem~\ref{currentisproduct} we can assume $E''\subset \ndpt \zero_\Gamma^{\small{\mathrm{rec}}}$ as $\ndpt \zero_\Gamma^{\small{\mathrm{rec}}} $ has full  $\overline\mu$-measure in $\ndpt \SX$. To any such function we associate the set of its periods  which is a closed subgroup of $\RR$; it only depends on  $( v^-, v^+)\in E''$. This gives a map from $E''$ into the set of closed subgroups of $\RR$ which is $\Gamma$-invariant as $\widetilde\Psi_\varepsilon$ is. By ergodicity of $\overline\mu\,$ (Theorem~\ref{propertiesofRicks}) this map is constant $\overline\mu$-almost everywhere. 

Assume that this constant image is the group $\RR$. Hence for  $\overline\mu$-almost every $( v^-, v^+)\in E''$ every real number is a period of $h_v$ for some $v\in\ndpt^{-1}(v^-, v^+)$ which is only possible if $h_{ v}$ is independent of $t$. In this case $\widetilde\Psi_\varepsilon$ induces a $\Gamma$-invariant function on a subset $E'\subset E''\subset \ndpt \zero_\Gamma^{\small{\mathrm{rec}}}$ of full $\overline\mu$-measure. Again by ergodicity of $\overline\mu\,$ this function is constant, which finally gives a contradiction to the fact that $\widetilde\Psi_\varepsilon$ is non-constant. So we conclude that there exists a subset $E'\subset \ndpt\zero_\Gamma^{\small{\mathrm{rec}}}$  of full $\overline\mu$-measure 
and $a\ge 0$ such that the constant image of the map above restricted to $E'$ is the closed subgroup $2 a\ZZ$. 

In order to get the desired contradiction, we will next show that the cross-ratio spectrum $\Cr(\mathcal{Q}_\Gamma)$ is contained in the closed subgroup $a\ZZ$. 
We denote $\widetilde f:\SX\to\RR$  the lift of $f$  to $\SX$, and define
 \[\widetilde f_\varepsilon:\SX \to\RR, \quad  v\mapsto \int_0^\varepsilon \widetilde f(g^s  v)\d s.\]
 Since $\widetilde f$ is $\Gamma$-invariant, $\widetilde f_\varepsilon$ is also $\Gamma$-invariant and therefore descends to a function $f_\varepsilon$ on $\quotient{\Gamma}{\SX}$. 
Moreover, 
\[ f_\varepsilon\circ g_\Gamma^{s_n}\to \Psi_\varepsilon\quad\text{and}\quad  f_\varepsilon\circ g_\Gamma^{-s_n}\to \Psi_\varepsilon\]
weakly in 
$ \LL^2(m_\Gamma)$ as $n\to\infty$, where $\Psi_\varepsilon\in \LL^2(m_\Gamma)$ is the function induced from the $\Gamma$-invariant function $\widetilde\Psi_\varepsilon$  above. 
%
%
According to the classical fact stated and proved in \cite[Section~1]{MR1910932}  there exists a  sequence $(n_k)\subset\NN$ \st  $\Psi_\varepsilon$ is the almost sure limit of the Cesaro averages for positive and negative times 
\[\frac{1}{K^2}\sum_{k=1}^{K^2} f_\varepsilon \circ g_\Gamma^{s_{n_k}}\quad\text{and }\quad \frac{1}{K^2}\sum_{k=1}^{K^2} f_\varepsilon \circ g_\Gamma^{-s_{n_k}}.\]
We denote $\widetilde \Psi_\varepsilon^+$, $\widetilde \Psi_\varepsilon^-$ the lifts of the almost sure limits of the Cesaro averages above  and consider the set  
\begin{align*}
\widetilde \Omega &:=\{ u\in\zero_\Gamma^{\small{\mathrm{rec}}} \colon  
\widetilde\Psi_\varepsilon^+(u),\ \widetilde\Psi_\varepsilon^-(u)  \ \text{ exist and  }\  \widetilde\Psi_\varepsilon^+(u)=\widetilde\Psi_\varepsilon^-(u)=\widetilde\Psi_\varepsilon(u)\} ;\end{align*}
from the previous paragraph and the fact that $\ndpt \zero_\Gamma^{\small{\mathrm{rec}}}$ has full $\overline\mu$-measure  we know that $\ndpt\widetilde\Omega$ has full $\overline\mu$-measure. 
The same is true for the set  $E:=E'\cap \ndpt\widetilde\Omega$, where $E'\subset  \ndpt\zero_\Gamma^{\small{\mathrm{rec}}}$ is the set of full $\overline\mu$-measure from the first part of the proof. So in particular  $v\in\ndpt^{-1}E$ implies that the periods of the continuous function $h_v\in\Cnt(\RR)$ are contained in the closed subgroup $2a\ZZ$. 

Since $\widetilde f$ is the lift of  a function $f\in \Cnt_c(\quotient{\Gamma}{\SX})$, both $\widetilde f$ and $\widetilde f_\varepsilon$ are uniformly continuous. So if $ u,v\in\widetilde \Omega\subset\ndpt^{-1}E$ are arbitrary, then according to Lemma~\ref{KniepersProp} we have the following statements:
\begin{itemize}
\item[(a)] 
If  $ u^+= v^+$ and  $\bs_{ v^+}( u(0), v(0))=0$, then 
$\ \widetilde \Psi_\varepsilon^+( u)= \widetilde \Psi_\varepsilon^+( v)$.
\item[(b)] 
If  $ u^-= v^-$ and  $\bs_{ v^-}( u(0), v(0))=0$, then  
$\ \widetilde \Psi_\varepsilon^-( u)= \widetilde \Psi_\varepsilon^-( v)$.
\end{itemize}
Now according to Corollary~\ref{useFubini}  the sets
\begin{align*}
E^-&:= \{\xi\in\rand\colon (\xi,\eta')\in E\ \text{ for }\ \mu^+\text{-almost every }\ \eta'\in\rand\}\quad\text{and}\\
E^+&:= \{\eta\in\rand\colon (\xi',\eta)\in E\ \text{ for }\ \mu^-\text{-almost every }\ \xi'\in\rand\}
\end{align*}
satisfy $\mu_-(E^-)=\mu_-(\rand)$, $\mu_+(E^+)=\mu_+(\rand)$, hence $E^-\times E^+$ has full $\overline\mu$-measure. 

We first consider the set of special quadrilaterals
\begin{align*}
 \mathcal{S} &=\{(\xi,\xi',\eta,\eta')\colon (\xi,\eta)\in E\cap (E^-\times E^+),\
  (\xi',\eta'), (\xi,\eta'), (\xi',\eta)\in E\}\subset \mathcal{Q}_\Gamma. \end{align*} 
So let $(\xi,\eta)\in E\cap (E^-\times E^+)$ and choose $(\xi',\eta')\in E\,$ such that  $(\xi',\eta)$ and $(\xi,\eta')$ also belong to $E$. In order to show that the cross-ratio
$ \Cr(\xi,\xi',\eta,\eta')$ belongs to $a\ZZ$ we start with a geodesic $v\in\ndpt^{-1}(\xi,\eta)$. 

Let $v_1\in \ndpt^{-1}(\xi',\eta)$ \st $\bs_{\eta}\bigl(v(0), v_1(0)\bigr)=0$, $v_2\in \ndpt^{-1}(\xi',\eta')$ \st $\bs_{\xi'}\bigl(v_1(0), v_2(0)\bigr)=0$, $v_3\in \ndpt^{-1}(\xi,\eta')$ \st $\bs_{\eta'}\bigl(v_2(0), v_3(0)\bigr)=0$ and finally
$v_4\in \ndpt^{-1}(\xi,\eta)$ \st $\bs_{\xi}\bigl(v_3(0), v_4(0)\bigr)=0$. Then according to (a)
\[ \widetilde \Psi_\varepsilon^+(v)=\widetilde \Psi_\varepsilon^+(v_1)=\widetilde \Psi_\varepsilon^-(v_1)\]
by choice of $\widetilde\Omega$. Moreover (b) gives
\[ \widetilde \Psi_\varepsilon^-(v_1)=\widetilde \Psi_\varepsilon^-(v_2)=\widetilde \Psi_\varepsilon^+(v_2).\]
Again by (a) we get
\[ \widetilde \Psi_\varepsilon^+(v_2)=\widetilde \Psi_\varepsilon^+(v_3)=\widetilde \Psi_\varepsilon^-(v_3)\]
and by (b)
\[\widetilde \Psi_\varepsilon^-(v_3)=\widetilde \Psi_\varepsilon^-(v_4)=\widetilde \Psi_\varepsilon^+(v_4).\]
Altogether this shows $\widetilde \Psi_\varepsilon(v_4)=\widetilde \Psi_\varepsilon(v)$, and since $\ndpt v_4=\ndpt v$ we know that there exists $t\in\RR$ \st $v=g^t v_4$. Hence $t$ is a period of the function $h_v$ and therefore  $t\in 2 a\ZZ$ (as $\ndpt v\in E'$). On the other hand, we have 
\begin{align*}
2\Cr(\xi,\xi',\eta,\eta') &= 2 \bigl(\Gr_\xo(\xi,\eta)+\Gr_\xo(\xi',\eta')- \Gr_\xo(\xi,\eta')-\Gr_\xo(\xi',\eta)\bigr)\\
&= \bs_{\xi}(\xo, v(0))+\bs_{\eta}(\xo, v(0)) + \bs_{\xi'}(\xo, v_2(0))+\bs_{\eta'}(\xo, v_2(0)) \\
&\quad -  \bs_{\xi}(\xo, v_3(0))-\bs_{\eta'}(\xo, v_3(0)) - \bs_{\xi'}(\xo, v_1(0))-\bs_{\eta}(\xo, v_1(0))\\
&= \underbrace{\bs_{\eta}(v_1(0), v(0))}_{=0} + \underbrace{\bs_{\xi'}(v_1(0), v_2(0))}_{=0}+\underbrace{\bs_{\eta'}(v_3(0), v_2(0))}_{=0}\\
&\quad +   \underbrace{\bs_\xi(v_4(0), v_3(0))}_{=0}+ \bs_{\xi}(v_3(0), v(0))\\
& = \bs_{\xi}(v_4(0), v(0)) =\bs_\xi(v_4(0),v_4(t))=t\in 2a\ZZ,
\end{align*}
hence $\Cr(\xi,\xi',\eta,\eta') \in a\ZZ$. This proves that $\Cr(\mathcal{S})\subset a\ZZ$.

Finally, since the  cross-ratio is continuous and the set of special quadrilaterals
$ \mathcal{S}$ is dense in $\mathcal{Q}_\Gamma$, the cross-ratio spectrum $\Cr(\mathcal{Q}_\Gamma)$ is included in the discrete subgroup $a\ZZ$ of $\RR$. So according to Proposition~\ref{lengthsubsetcr} the length spectrum is arithmetic in contradiction to the hypothesis of the theorem.
\end{proof}

We will often work in the universal cover $\XX$ of $\quotient{\Gamma}{\XX}$ and therefore need the following 

\begin{corollary}\label{mixcor}
Let $\Gamma<\is(\XX)$ be a discrete rank one group with  non-arithmetic length spectrum and $\zero_\Gamma\ne\emptyset$.  Let $\mu_-$, $\mu_+$ be non-atomic finite Borel measures on $\rand$ with 
$\mu_\pm(\radlim)=\mu_\pm(\rand)$, and   
\[\overline\mu\sim 
(\mu_-\otimes\mu_+)\ein_{\lower 0.4ex\hbox{$\scriptstyle\ndpt\reg$}}\]
 a quasi-product geodesic current defined on $\ndpt\reg$ for 
which the constant $\Delta$ defined in (\ref{Deltadef}) is  finite. 
Let $m_\Gamma$ be the associated Ricks' measure  on $\quotient{\Gamma}{ \SX}$, $A$, $B\subset\quotient{\Gamma}{\SX}$ Borel sets with  
 $m_\Gamma(A)$ and $m_\Gamma(B)$ finite, and $\widetilde A$, $\widetilde B\subset\SX$ lifts of $A$ and $B$. Then  
\[ \lim_{t\to\pm \infty} \Bigl( \sum_{\gamma\in\Gamma} m(\widetilde A\cap g^{-t}\gamma \widetilde B)\Bigr)=\left\{\begin{array} {cl}\displaystyle \frac{m(\widetilde A)\cdot m(\widetilde B)}{\Vert m_\Gamma\Vert} & \text{ if } \ m_\Gamma  \text{ is finite},\\[3mm]
0 & \text{ if } \ m_\Gamma  \text{ is infinite}.\end{array}\right.\]
\end{corollary}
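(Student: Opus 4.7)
The plan is to deduce the corollary from Theorem~\ref{mixthm} by unfolding the mixing statement on the quotient $\quotient{\Gamma}{\SX}$ to the cover $\SX$ via the characterizing property~(\ref{chardesmeasure}) of the quotient measure. The underlying observation is that $\Gamma$-translates of a lift are essentially disjoint, so a sum over $\Gamma$ of intersection masses on $\SX$ collapses to a single intersection mass on the quotient.

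First I would note that since $\widetilde A$ and $\widetilde B$ are lifts of $A$ and $B$, the projection $\pi\colon\SX\to\quotient{\Gamma}{\SX}$ is essentially injective on each of them, so the translates $\{\gamma\widetilde A\}_{\gamma\in\Gamma}$ and $\{\gamma\widetilde B\}_{\gamma\in\Gamma}$ are pairwise disjoint up to $m$-nullsets. Since by~(\ref{defstrongRicks}) the measure $m$ is supported on $\zero$ and $\Hopf_\xo$ restricted to $\zero$ is a homeomorphism onto $[\zero]\subset[\reg]$, this disjointness is inherited from the proper $\Gamma$-action on $[\reg]$ granted by Lemma~\ref{joinrankone}. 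Applying~(\ref{chardesmeasure}) with constant test function then gives $m(\widetilde A)=m_\Gamma(A)$ and $m(\widetilde B)=m_\Gamma(B)$.

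Second, since the geodesic flow commutes with $\Gamma$ and descends to $g_\Gamma^t$, I have $\pi^{-1}(g_\Gamma^{-t}B)=\bigcup_{\gamma\in\Gamma} g^{-t}\gamma\widetilde B$, and hence, up to $m$-nullsets,
\[
\widetilde A\cap\pi^{-1}(g_\Gamma^{-t}B)=\bigsqcup_{\gamma\in\Gamma}\bigl(\widetilde A\cap g^{-t}\gamma\widetilde B\bigr).
\]
Applying~(\ref{chardesmeasure}) with $\widetilde h=\chr_{\pi^{-1}(g_\Gamma^{-t}B)}$ and counting function $\widetilde f_{\widetilde A}$ (which equals $\chr_A$ on $\quotient{\Gamma}{[\reg]}$ because $\widetilde A$ is a lift), and using Tonelli's theorem to exchange sum and integral on the non-negative terms, yields the key identity
\[
\sum_{\gamma\in\Gamma} m(\widetilde A\cap g^{-t}\gamma\widetilde B)=m_\Gamma(A\cap g_\Gamma^{-t}B).
\]

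Finally, the hypotheses on $\Gamma$, $\mu_\pm$ and $\overline\mu$ are exactly those of Theorem~\ref{mixthm}, so as $t\to\pm\infty$ the right-hand side above tends to $m_\Gamma(A)m_\Gamma(B)/\Vert m_\Gamma\Vert$ when $m_\Gamma$ is finite and to $0$ when $m_\Gamma$ is infinite. Substituting $m_\Gamma(A)=m(\widetilde A)$ and $m_\Gamma(B)=m(\widetilde B)$ on the right-hand side produces the claim. The only real obstacle is the bookkeeping around essential disjointness of $\Gamma$-translates of lifts on $\SX$ rather than directly on $[\reg]$, which is handled by observing that $m$ ignores everything outside $\zero$, so the quotient-measure identities proved on $[\reg]$ transfer verbatim via $\Hopf_\xo$.
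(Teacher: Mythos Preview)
Your approach is correct and arrives at the same key identity
\[
\sum_{\gamma\in\Gamma} m(\widetilde A\cap g^{-t}\gamma\widetilde B)=m_\Gamma(A\cap g_\Gamma^{-t}B)
\]
that the paper establishes, after which both proofs simply invoke Theorem~\ref{mixthm}. The route, however, is genuinely different. The paper spends most of its effort constructing an explicit locally finite fundamental domain $\mathcal D\subset\SX$ with $m(\partial\mathcal D)=0$ (starting from a Dirichlet domain centred at a point with trivial stabilizer and then modifying it near the boundary \`a la Roblin), so that integration over $\mathcal D$ computes $m_\Gamma$-integrals; it then places $\widetilde A,\widetilde B\subset\overline{\mathcal D}$ and unfolds the indicator of $A\cap g_\Gamma^{-t}B$. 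You bypass the fundamental-domain construction entirely by appealing directly to the characterizing property~(\ref{chardesmeasure}) with the counting function $f_{\widetilde A}$, which is more economical.

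There is one point where your justification is too quick. You write that essential disjointness of the translates $\{\gamma\widetilde A\}$ ``is inherited from the proper $\Gamma$-action on $[\reg]$''. Properness only gives \emph{finite} stabilizers; if $\Gamma$ has torsion, a lift can genuinely meet its translates on the set
\[
\mathcal F=\{v\in\SX\colon \gamma v=v \text{ for some }\gamma\in\Gamma\setminus\{e\}\},
\]
and you need $m(\mathcal F)=0$ both for the disjointness and for $f_{\widetilde A}=\chr_A$ to hold $m_\Gamma$-a.e. The paper proves $m(\mathcal F)=0$ as its opening step, using that $\mathcal F$ is closed, $\Gamma$- and flow-invariant, and that $\mathcal F\cap\zero$ is a proper subset of $\supp(m)$, so ergodicity (Theorem~\ref{propertiesofRicks}) forces it to be null. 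Once you insert this argument, your proof is complete and arguably cleaner than the paper's, since it avoids the Roblin-style boundary modification altogether.
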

\begin{proof}
Accordung to Lemma~\ref{lem:fundwith0measureboundary} there exists a  $\Gamma$-invariant subset  $\SX'\subset\SX$ of full $m$-measure and a locally finite fundamental domain $\mathcal{D}\subset\SX'$  for the action of $\Gamma$ on $\SX'$   with $m(\partial{\mathcal D})=0$. 
Notice that for any measurable function  
$h\in \LL^1(m_\Gamma)$ with lift $\widetilde h:\SX\to\RR$ 
the integral $\int_{\mathcal D} \widetilde h \d m$ is independent of the chosen fundamental domain ${\mathcal D}\subset\SX$  
as above. Moreover, we obviously get  from (\ref{chardesmeasure}) and (\ref{defstrongRicks})
 \[ \int_{{\mathcal D}} \widetilde h \d m =\int_{\scriptsize\quotient{\Gamma}{\SX}} h\d m_\Gamma.\] 

Now let
 $A$, $B\subset\quotient{\Gamma}{\SX}$ be Borel sets with  
 $m_\Gamma(A)$ and $m_\Gamma(B)$ finite, and $\widetilde A$, $\widetilde B\subset\SX$ lifts of $A$ and $B$.  Without loss of generality we may assume that $\widetilde A$, $\widetilde B\subset \overline{\mathcal D}$. For $t\in\RR$ consider the function $h_t \in \LL^1(m_\Gamma)$ 
 defined by 
 \[ h_t=\mathbbm{1}_{A\cap g_\Gamma^{-t} B}.\]
 For its lift $\widetilde h_t$ and $v\in\SX$ we have
  \[ \widetilde h_t(v)=1\quad\text{if }\ \gamma' v\in \widetilde A\cap g^{-t}\gamma\widetilde B\ \text{ for some } \gamma',\gamma\in\Gamma,\]
 and $\widetilde h_t(v)=0$ otherwise. So 
 \begin{align*}
 m_\Gamma (A\cap g_\Gamma^{-t}B)&= \int_{\scriptsize\quotient{\Gamma}{\SX}} h_t \d m_\Gamma =\int_{\mathcal D} \widetilde h_t \d m =\sum_{\gamma\in\Gamma} m(\widetilde A\cap g^{-t}\gamma\widetilde B).
 \end{align*}
The claim now follows from Theorem~\ref{mixthm}, because 
\[m_\Gamma(A)= \int_{\scriptsize\quotient{\Gamma}{\SX}} \mathbbm{1}_A \d m_\Gamma =\int_{\mathcal D} \mathbbm{1}_{\Gamma \widetilde A} \d m =m(\widetilde A)\quad\text{and}\quad m_\Gamma (B)=m(\widetilde B).\]
 \end{proof}
Notice that in general it is not so easy to determine whether a discrete rank one group has arithmetic length spectrum or not. As mentioned before, if $\Gamma<\is(\XX)$ has finite Ricks' Bowen-Margulis measure and satisfies $\Lim=\rand$, then according to Theorem~4 in \cite{Ricks} the length spectrum of $\Gamma$ is arithmetic if and only if $\XX$ is a tree with all edge lengths in $c\NN$ for some $c>0$. This includes Babillot's observation that for cocompact discrete rank one groups  of a Hadamard {\hl manifold} the length spectrum is non-arithmetic. Moreover, we recall a few further results: 

\begin{proposition}
Let $\XX$ be a proper CAT$(-1)$ Hadamard space.  A discrete rank one group $\Gamma<\is(\XX)$ has non-arithmetic length spectrum if
\begin{itemize}
\item  $\Gamma$ contains a parabolic isometry (\cite{MR1617430}),
\item the limit set $\Lim$ possesses a connected component which is not reduced to a point (\cite{MR1341941}),
\item $\XX$ is a manifold with constant sectional curvature (\cite[Proposition~3]{MR841080}),
\item $\XX$ is a Riemannian surface (\cite{MR1703039}).
\end{itemize}

\end{proposition}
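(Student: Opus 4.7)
The plan is to prove the four assertions separately, after checking in each case that the relevant arguments from the cited literature extend to our slightly more general setting of a discrete rank one group acting on a proper CAT$(-1)$ Hadamard space. In every case the basic tool is Proposition~\ref{lengthsubsetcr}, which identifies the length spectrum as a subset of the cross-ratio spectrum $\Cr(\mathcal{Q}_\Gamma)$; the overall strategy is to exhibit enough cross-ratios taking values that cannot all fit inside a single discrete subgroup of $\RR$.

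For the parabolic case I would follow Dal'bo \cite{MR1617430}: fix a parabolic $p\in\Gamma$ with parabolic fixed point $\xi$ and, using the rank one hypothesis together with Lemma~\ref{dynrankone}, choose an independent rank one hyperbolic $h\in\Gamma$ whose fixed points avoid $\xi$. A standard ping-pong argument then shows that $hp^n$ is rank one hyperbolic for all sufficiently large $n$, and an explicit CAT$(-1)$ estimate yields $\ell(hp^n)=2\log d(\xo,p^n\xo)+O(1)$ as $n\to\infty$. Since $d(\xo,p^{n+1}\xo)-d(\xo,p^n\xo)\to 0$ for a parabolic acting on a proper CAT$(-1)$ space, the consecutive differences $\ell(hp^{n+1})-\ell(hp^n)$ tend to zero without ever vanishing, which forces the length spectrum to accumulate inside $\RR$ and rules out arithmeticity.

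For the second item I would exploit the density of fixed-point pairs of rank one elements inside $\Lim\times\Lim$: if a connected component of $\Lim$ is not reduced to a point, then one can choose sequences of rank one elements whose fixed-point pairs trace out a non-trivial continuous arc. Since the cross-ratio $\Cr$ of Definition~\ref{Crossratiodef} depends continuously on its four arguments, one obtains a continuum of cross-ratio values in $\Cr(\mathcal{Q}_\Gamma)$, and hence in the smallest closed subgroup of $\RR$ containing it. This is incompatible with discreteness, and together with Proposition~\ref{lengthsubsetcr} yields the conclusion, reproducing the argument of Dal'bo \cite{MR1341941}.

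The remaining two items would be handled by direct appeal to the literature: in the constant sectional curvature case one invokes the trace-field analysis of Dal'bo \cite{MR841080} applied to the associated M\"obius subgroup, while for a negatively curved Riemannian surface one cites the thermodynamic-formalism argument of Sharp \cite{MR1703039}. The main obstacle throughout is that several of these references are originally formulated for CAT$(-1)$ manifolds or for cocompact groups, so I would need to verify that the basic ingredients, namely ping-pong constructions, density of rank one fixed point pairs in $\Lim\times\Lim$, and the continuity of the cross-ratio, still hold in the setting of a proper CAT$(-1)$ Hadamard space; all of these do, thanks to the north-south dynamics of Lemma~\ref{dynrankone} and the continuous extension of the $\is(\XX)$-action to $\rand$.
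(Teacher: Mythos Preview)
The paper does not prove this proposition at all: it is stated as a summary of known results, each item carrying a citation to the reference where the corresponding fact is established. The text immediately preceding it says ``Moreover, we recall a few further results'', and Section~\ref{shadowconecorridor} begins directly afterwards. So the ``paper's own proof'' is simply the list of citations; no argument is given, and none is expected.

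Your proposal goes well beyond this by sketching actual arguments, which is fine as an exercise but not a match for the paper. A few points worth flagging in your sketches:
\begin{itemize}
\item You misattribute several of the references. Item two (\cite{MR1341941}) is Bourdon, not Dal'bo; item three (\cite{MR841080}) is Guivarc'h--Raugi on products of random matrices, not a paper of Dal'bo; item four (\cite{MR1703039}) is Dal'bo, not Sharp.
\item In the parabolic case the asymptotic you write, $\ell(hp^n)=2\log d(\xo,p^n\xo)+O(1)$, is not the standard form of the estimate; the usual argument (Dal'bo--Peign\'e) controls $\ell(hp^n)$ via Busemann cocycles and shows the differences $\ell(hp^{n+1})-\ell(hp^n)$ tend to~$0$ but not through the formula you give.
\item Your invocation of Proposition~\ref{lengthsubsetcr} is unnecessary here: each cited reference works directly with translation lengths, not with the cross-ratio spectrum, so the route through $\Cr(\mathcal{Q}_\Gamma)$ is an extra layer.
\end{itemize}
If you want to align with the paper, simply state the proposition with the correct citations and omit the proof.
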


\section{Shadows, cones and corridors}\label{shadowconecorridor}

We keep the notation and conditions from the previous section. So in particular $\XX$ is a proper Hadamard space and $\Gamma<\is(\XX)$ a discrete rank one group. 
For our proof of the equidistibution theorem we will need  a few definitions and preliminary statements. 
Recall that for $y\in \XX$ and $r>0$ $B_r(y)\subset\XX$  denotes the open ball of radius $r$ centered at $y\in\XX$.
The {\hd shadow} of $B_r(y)\subset\XX$ viewed from the source $x\in \XX$ is defined by 
\[ {\mathcal O}_{r}(x,y):=\{\eta \in\rand\colon \sigma_{x,\eta}(\RR_+)\cap B_r(y)\neq\emptyset\};\] 
this is an open subset of the geometric boundary $\rand$. If $\xi\in\rand$ we define
\begin{align*} {\mathcal O}_{r}(\xi,y)&:=\{\eta \in\rand\colon \exists \ v\in\ndpt^{-1}(\xi,\eta) \  \text{with }\ v(0)\in B_r(y)\}\\
&= \{\eta\in\rand\colon (\xi,\eta)\in\ndpt\SX\ \text{ and }\ d\bigl(y,(\xi\eta)\bigr)<r\}.\end{align*}
Notice that due to the possible existence of flat subspaces in $\XX$ a shadow ${\mathcal O}_{r}(\xi,y)$ with source $\xi\in\rand$ need not be open: In a Euclidean plane such a shadow always consists of a single point in the boundary, no matter how large $r$ is. In our context, the shadows with source $\xi$ in the boundary $\rand$ will be larger, but still not necessarily open. 
\begin{remark}\label{shadowsfrominfinity}
 If $\xi$ is the positive end point $v^+$ of a weakly $\is(\XX)$-recurrent geodesic $v\in\zero$, then Lemma~\ref{jointoweakrecurrent} and Lemma~\ref{joinrankone} imply that ${\mathcal O}_{r}(\xi,y)$ is open for any $y\in \XX$.

More generally, if there exists a geodesic $u\in \zero$ with $u^+=\xi$ and $u(0)\in B_r(y)$, then according to Lemma~\ref{joinrankone} the shadow ${\mathcal O}_{r}(\xi,y)$ contains an open neighborhood of $u^-$ in $\rand$, but need not be open: If $u$ is not $\is(\XX)$-recurrent, then this open neighborhood of $u^-$ can be much smaller than ${\mathcal O}_{r}(\xi,y)$, and there might exist a point $\eta\in {\mathcal O}_{r}(\xi,y)$ such that $(\xi\eta)$ is isometric to a Euclidean plane.  But  $\xi$ cannot be joined to any point in the boundary of $(\xi\eta)$ different from $\eta$, no matter how close it is to $\eta$. In this case, every open neighborhood of $\eta$ intersects the complement of the shadow ${\mathcal O}_{r}(\xi,y)$ in $\rand$ non-trivially (as this complement includes all the boundary points which cannot be joined to $\xi$ by a geodesic), hence $\eta\in \partial  {\mathcal O}_{r}(\xi,y)$.
\end{remark}

We will now prove that  this cannot happen if $\eta$ is the end point of an $\is(\XX)$-recurrent geodesic $v\in\zero$, that is if $\eta$ belongs to 
the set 
\begin{equation}\label{endpointsofzerowidthrecurrent}
\rand^{\small{\mathrm{rec}}}:=\{\eta\in\rand\colon \exists\, v\in\zero\ \, \is(\XX)\text{-recurrent with}\ \eta=v^+\}.\end{equation}
\begin{lemma}\label{lem:boundaryofshadow}
Let $\xi\in\rand$, $x\in\XX$ and $r>0$  arbitrary. Then for the closure $\overline{{\mathcal O}_{r}(\xi,x)}$ and the boundary $\partial{\mathcal O}_{r}(\xi,x)$ 
of the shadow ${\mathcal O}_{r}(\xi,x)\subset\rand$ we have\vspace*{1mm}
\begin{enumerate}
\item[(a)] $\quad \displaystyle \overline{{\mathcal O}_{r}(\xi,x)} \subset \{\zeta\in\rand\colon  (\xi,\zeta)\in\ndpt\SX\ \text{ and }\  
d\bigl(x, (\xi\zeta)\bigr)\le r\}$, \vspace*{1mm} 
\item[(b)] $\quad \displaystyle \partial {\mathcal O}_{r}(\xi,x)\cap \rand^{\small{\mathrm{rec}}} \subset \{\zeta \in\rand^{\small{\mathrm{rec}}}\setminus\{\xi\}\colon d\bigl(x,(\xi\zeta)\bigr)=r\}.$\vspace*{1mm}
\end{enumerate}
\end{lemma}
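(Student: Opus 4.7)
The plan for (a) is a compactness argument. Given a sequence $(\eta_n) \subset {\mathcal O}_{r}(\xi,x)$ with $\eta_n \to \zeta$, I choose for each $n$ a joining geodesic $v_n \in \ndpt^{-1}(\xi,\eta_n)$ with $v_n(0)$ realizing the distance from $x$ to $(\xi\eta_n)$, so that $d(x, v_n(0)) < r$. Since $\XX$ is proper, the origins $v_n(0)$ lie in a compact ball and Arzel\`a--Ascoli gives a subsequence that converges in the compact-open topology to some $v \in \SX$ with $v(0) \in \overline{B_r(x)}$. The endpoint map $\ndpt:\SX\to\rand\times\rand$ is continuous with respect to this topology, so $v^- = \xi$ and $v^+ = \zeta$; in particular $(\xi,\zeta)\in\ndpt\SX$ and $d(x,(\xi\zeta))\le r$.

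For (b), let $\zeta \in \partial{\mathcal O}_{r}(\xi,x)\cap\rand^{\small{\mathrm{rec}}}$. Part (a) already yields $(\xi,\zeta)\in\ndpt\SX$, hence $\zeta\ne\xi$, and $d(x,(\xi\zeta))\le r$. I argue by contradiction, supposing $d(x,(\xi\zeta)) < r$. Because $\zeta\in\rand^{\small{\mathrm{rec}}}$, there exists an $\is(\XX)$-recurrent $u\in\zero$ with $u^+=\zeta$, and Lemma \ref{jointoweakrecurrent} produces $w\in\reg$ with $(w^-,w^+)=(\xi,\zeta)$ and $\width(w)\le\width(u)=0$, i.e.\ $w\in\zero$. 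Since $(\xi\zeta)$ is isometric to $C_{(\xi\zeta)}\times\RR$ with $\diam C_{(\xi\zeta)}=\width(w)=0$, the set $(\xi\zeta)$ is a single geodesic line up to parametrization. After reparametrizing so that $w(0)$ is the closest point of $(\xi\zeta)$ to $x$ I obtain $d(x,w(0))<r$. Now I apply Lemma \ref{joinrankone} to $w$ with any $c$ satisfying $0<c<r-d(x,w(0))$: this provides open neighborhoods $U^-$ of $\xi$ and $U^+$ of $\zeta$ such that every pair $(\xi',\eta')\in U^-\times U^+$ is joined by a rank one geodesic $w'$ passing within $c$ of $w(0)$. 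Fixing $\xi'=\xi\in U^-$, every $\eta'\in U^+$ admits a joining geodesic with a point in $B_{d(x,w(0))+c}(x)\subset B_r(x)$, so $U^+\subset{\mathcal O}_{r}(\xi,x)$, contradicting $\zeta\in\partial{\mathcal O}_{r}(\xi,x)$.

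The main subtlety sits in the step where the recurrence hypothesis $\zeta\in\rand^{\small{\mathrm{rec}}}$ is used. Without it, (a) only guarantees \emph{some} geodesic from $\xi$ to $\zeta$, possibly of positive width or, worse, the set $(\xi\zeta)$ could be genuinely thick with the specific joining geodesic near $x$ being non-recurrent; then Lemma \ref{joinrankone} would only open up a neighborhood of that geodesic's far endpoint rather than of $\zeta$ itself (cf.\ Remark \ref{shadowsfrominfinity}). Invoking Lemma \ref{jointoweakrecurrent} with an $\is(\XX)$-recurrent $u\in\zero$ is precisely what forces $\width(w)=0$, collapses $C_{(\xi\zeta)}$ to a point, and lets Lemma \ref{joinrankone} applied around $w$ produce a genuine $\rand$-neighborhood of $\zeta$ inside ${\mathcal O}_{r}(\xi,x)$.
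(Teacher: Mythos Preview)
Your proof is correct and follows essentially the same strategy as the paper. For part~(b) your argument is identical in substance to the paper's: both invoke Lemma~\ref{jointoweakrecurrent} to force $(\xi,\zeta)\in\ndpt\zero$ and then apply Lemma~\ref{joinrankone} to produce an open neighborhood of $\zeta$ inside the shadow, yielding the contradiction. The only cosmetic difference is that the paper phrases the argument as ``every $\eta\in(\rand^{\small{\mathrm{rec}}}\setminus\{\xi\})\cap\mathcal{O}_r(\xi,x)$ is interior'' while you work directly with the given boundary point $\zeta$; also, you should write $U^+\cap\rand\subset\mathcal{O}_r(\xi,x)$ rather than $U^+\subset\mathcal{O}_r(\xi,x)$, since $U^+$ is a neighborhood in $\ganz$.

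For part~(a) there is a mild but genuine difference in the technical tool. The paper extracts only a convergent subsequence of the origins $v_n(0)\to z\in\overline{B_r(x)}$ and then appeals to upper semicontinuity of the Alexandrov angle (Proposition~II.9.2 in \cite{MR1744486}) to conclude $\angle_z(\xi,\zeta)=\pi$, hence $z\in(\xi\zeta)$. You instead extract a subsequence of the full geodesics $v_n$ converging in $\SX$ via Arzel\`a--Ascoli and use continuity of the endpoint map. Both arguments are standard and equally short; yours is perhaps more direct since it avoids the angle machinery, while the paper's version has the minor advantage of needing only compactness of $\overline{B_r(x)}$ rather than of balls of all radii (which is of course equivalent in a proper space).
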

\begin{proof}
%
%

In order to prove (a) we let $\zeta\in \overline{ {\mathcal O}_{r}(\xi,x)}$ arbitrary. Then there exists a sequence $(\zeta_n)\subset \mathcal O_{r}(\xi,x)$  with $\zeta_n\to\zeta$ as $n\to\infty$.
  For $n\in\NN$ we let $v_n=v(x;\xi,\zeta_n)\in \SX$ as defined in (\ref{orthogonalproj}), hence in particular $v_n^-=\xi$, $v_n^+=\zeta_n$ and $v_n(0)\in B_r(x)$. Passing to a subsequence if necessary we may assume that $v_n(0)$ converges to a point $z\in \overline{B_r(x)}$ (as $\overline{B_r(x)}$ is compact). Recall the definiton of the Alexandrov angle from (\ref{Alexandrovangle}). According to Proposition~II.9.2 in \cite{MR1744486} we have
\[ \angle_z(\xi,\zeta)\ge \limsup_{n\to\infty}\angle_{v_n(0)}(\xi,\zeta_n)=\pi,\]
since $v_n(0)$ is a point on the geodesic $v_n$ joining $\xi$ to $\zeta_n$. From $\angle_z(\xi,\zeta)\in [0,\pi]$ we therefore get $\angle_z(\xi,\zeta)=\pi$, hence $z\in \overline{B_r(x)}$ is a point on a geodesic joining $\xi$ to $\zeta$, and  in particular $(\xi,\zeta)\in\ndpt \SX $. 
This proves (a).

For the proof of  (b) we let $\zeta\in \partial {\mathcal O}_{r}(\xi,x)\cap \rand^{\small{\mathrm{rec}}}$ be arbitrary. By definition of the boundary we know that $\zeta\in\overline{ {\mathcal O}_{r}(\xi,x)}$ and that there exists a sequence $(\eta_n)\subset \rand\setminus {\mathcal O}_{r}(\xi,x)$ with  $\eta_n\to\zeta$ as $n\to\infty$. From (a) we know that $(\xi,\zeta)\in\ndpt \SX$, hence in particular $\zeta\ne \xi$, and  that
$d\bigl(x,(\xi\zeta)\bigr)\le r$. So it only remains to 
prove that 
$ d\bigl(x,(\xi\zeta)\bigr)\ge r$. 

We will prove that every point $\eta\in \bigl(\rand^{\small{\mathrm{rec}}}\setminus\{\xi\}\bigr)\cap  {\mathcal O}_{r}(\xi,x)$ is an interior point of $ {\mathcal O}_{r}(\xi,x)$: Then 
$d\bigl(x,(\xi\zeta)\bigr)< r$ would imply that $\zeta$ is an interior point of ${\mathcal O}_{r}(\xi,x)$ and therefore cannot be the limit of a sequence 
$(\eta_n)\subset\rand\setminus  {\mathcal O}_{r}(\xi,x)$, in contradiction
to $\zeta\in \partial {\mathcal O}_{r}(\xi,x)$. 

So let $\eta\in \bigl(\rand^{\small{\mathrm{rec}}}\setminus\{\xi\}\bigr)\cap  {\mathcal O}_{r}(\xi,x)$ be arbitrary. From Lemma~\ref{jointoweakrecurrent} we get that   $(\xi,\eta)\in\ndpt\zero$, and with  $v:=v(x;\xi,\eta)\in\zero$ we have 
$d\bigl(x,v(0)\bigr)=d\bigl(x, (\xi\eta)\bigr)< r$. Fix $\varepsilon=\frac12\left(r-d\bigl(x, (\xi\eta)\bigr)\right)>0$. According to Lemma~\ref{joinrankone} there exists an open neighborhood $U\subset\rand$ of $\eta$ such that any $u\in\SX$ with $u^-=\xi$ and $u^+\in U$ satisfies $u\in\reg$ and $d\bigl(v(0),u(\RR)\bigr)<\varepsilon$.  Let $\eta'\in U$ arbitrary and $u\in\ndpt^{-1}(\xi,\eta')$ be parametrized \st $d\bigl(v(0), u(0)\bigr)<\varepsilon$. Then 
\begin{align*} d\bigl(x,(\xi\eta')\bigr)&\le d\bigl(x,u(0)\bigr)\le d\bigl(x,v(0)\bigr)+d\bigl(v(0),u(0)\bigr)<d\bigl(x,(\xi\eta)\bigr) +\varepsilon\\
&<d\bigl(x,(\xi\eta)\bigr)+\frac12\left(r-d\bigl(x,(\xi\eta)\bigr)\right)<r.\end{align*}
\end{proof}

Instead of using the boundary $\partial {\mathcal O}_{r}(\xi,x)$ we will work in the sequel with the set
\begin{equation}\label{otherboundaryofshadow}
 \widetilde\partial {\mathcal O}_{r}(\xi,x):=\{\eta\in\rand\colon  (\xi,\eta)\in\ndpt\SX\ \text{ and }\  d\bigl(x, (\xi\eta)\bigr)=r\}\end{equation}
 whose intersection with $\rand^{\small{\mathrm{rec}}}$ may be strictly larger than
$\partial {\mathcal O}_{r}(\xi,x)\cap \rand^{\small{\mathrm{rec}}}$.  
Notice that every point $\eta\in\bigl(\rand^{\small{\mathrm{rec}}}\setminus\{\xi\} \bigr)\cap\bigl(\rand \setminus\widetilde\partial {\mathcal O}_{r}(\xi,x)\bigr)$  is an interior point of the complement $\rand \setminus \widetilde\partial {\mathcal O}_{r}(\xi,x)$  of $\widetilde\partial {\mathcal O}_{r}(\xi,x)$ in $\rand$. 
\begin{remark}
The converse inclusions ``$\supset$" in the above Lemma~\ref{lem:boundaryofshadow} are  wrong in general: If $\XX$ is a $4$-regular tree with all edge lengths equal to $1$, then 
\[ \overline{ {\mathcal O}_{r}(\xi,x)}= {\mathcal O}_{r}(\xi,x) = \{\eta\in\rand\setminus\{\xi\}\colon d\bigl(x,(\xi\eta)\bigr)\le \lceil{r}\rceil-1\},\]
where $\lceil{r}\rceil\in\NN$ is the smallest integer bigger than or equal to $r$. 
So for $n\in\NN$ we have 
\[ \overline{ {\mathcal O}_{n}(\xi,x)}\subsetneq \{\eta\in\rand\setminus\{\xi\}\colon d\bigl(x,(\xi\eta)\bigr)\le n\}.\]
Moreover, 
\begin{align*} \widetilde\partial {\mathcal O}_{n}(\xi,x)&= \{\eta\in\rand\setminus\{\xi\} \colon    d\bigl(x, (\xi\eta)\bigr)=n\}\\
&= \{\eta\in\rand\setminus\{\xi\} \colon  n\le   d\bigl(x, (\xi\eta)\bigr)<n+1\} \\
&={\mathcal O}_{n+1}(\xi,x)\setminus {\mathcal O}_{n}(\xi,x)\ne\emptyset,\end{align*}
whereas the boundary  $\partial {\mathcal O}_{r}(\xi,x)$ is always empty. Since all points in $\rand$ are $\is(\XX)$-recurrent, this shows that for all $n\in\NN$ 
\[ \emptyset = \partial {\mathcal O}_{n}(\xi,x)\cap \rand^{\small{\mathrm{rec}}} \subsetneq  \widetilde\partial {\mathcal O}_{n}(\xi,x) = \{\zeta \in\rand^{\small{\mathrm{rec}}}\setminus\{\xi\}\colon d\bigl(x,(\xi\zeta)\bigr)=n\}.\]
%
%
\end{remark}

We will further need the following refined versions of the shadows above which were first introduced  by  T.~Roblin (\cite{MR2057305}):
For
$r>0$ and $x,y\in\XX$ 
 we set
\begin{align*}
{\mathcal O}^-_{r}(x,y) &:= \{\eta\in\rand\colon \forall\, z\in B_r(x)\
\mbox{we have}\,\ \sigma_{z,\eta}(\RR_+)\cap B_r(y)\neq\emptyset\},\nonumber \\
{\mathcal O}^+_{r}(x,y) &:= \{\eta\in\rand \colon \exists\, z\in B_r(x)\ \mbox{such that}\,\ 
\sigma_{z,\eta}(\RR_+)\cap B_r(y)\neq\emptyset\}. 
\end{align*}
It is obvious from the definitions that for any $\rho>0$ and for all $x',y'\in\XX$ 
we have
\begin{equation}\label{inclusionoflargeshadows}
d(x,x')<\rho\ \text{ and }\  d(y,y')<\rho\quad\Longrightarrow\quad {\mathcal O}^+_{r}(x,y)\subset {\mathcal O}^+_{r+\rho}(x',y').
\end{equation}

Notice also that   ${\mathcal O}^-_{r}(x,y)$ need not be open as it is an uncountable intersection of open sets 
$ {\mathcal O}_{r}(z,y)$ with $z\in B_r(x)$ (for a concrete example see Remark~\ref{Pittetcounterex} below). 
If $\xi\in\rand$, we set
\[ {\mathcal O}^-_{r}(\xi,y)={\mathcal O}^+_{r}(\xi,y)={\mathcal O}_{r}(\xi,y).\]
\begin{remark}\label{noconvergenceofshadows}
In the middle of page~58 of \cite{MR2057305} it is stated that in a CAT$(-1)$-space $\XX$ every sequence  
$(z_n)\subset\ganz$ converging to a point $\xi\in\rand$ satisfies 
\[ \lim_{n\to\infty}  {\mathcal O}_{r}^{\pm}(z_n,x) = {\mathcal O}_{r}(\xi,x).\] 
This is not true in a CAT$(0)$-space as the following example shows:

Let $\XX$ be the Euclidean plane, $x\in\XX$ the origin $(0,0)$, and  identify $\rand$ with $\mathbb{S}^1\cong [0,2\pi)$. Let $\xi=\pi$ and $r>0$. Then obviously  ${\mathcal O}_{r}(\xi,x)=\{0\}$. 

For $n\in\NN$ we define $\varphi_n:=1/n$ and 
$z_n:=\bigl(-rn\cos(\varphi_n),-rn\sin(\varphi_n)\bigr)$, hence
\[ \sigma_{x,z_n}(-\infty)= \sigma_{z_n,x}(\infty)=\varphi_n \quad\text{and}\quad (z_n)\to\xi=\pi.\]
By elementary Euclidean geometry we further have $\, {\mathcal O}^-_{r}(z_n,x)=\{\varphi_n\}$, and thus 
\[ \lim_{n\to\infty} {\mathcal O}^-_{r}(z_n,x)=\emptyset\ne \{0\}= {\mathcal O}_{r}(\xi,x).\]
\end{remark}
However, the following statement 
will be sufficient for our purposes. 
%
\begin{proposition}\label{liminfsupofshadows}
Let $\xi\in\rand$, $x\in\XX$, $r>0$ and recall the definitions of $\widetilde\partial{\mathcal O}_{r}(\xi,x)$ from (\ref{otherboundaryofshadow}) and of $\rand^{\small{\mathrm{rec}}}$ from (\ref{endpointsofzerowidthrecurrent}). Then for 
every sequence $(z_n)\subset\ganz$ converging to $\xi$  the following inclusions hold:
\begin{enumerate}
\item[(a)]
$\ \displaystyle  \limsup_{n\to\infty} ({\mathcal O}^{\pm}_r(z_n,x)\cap \rand^{\small{\mathrm{rec}}})  \subset \bigl({\mathcal O}_r(\xi,x)\cup \widetilde\partial {\mathcal O}_{r}(\xi,x)\bigr)\cap \rand^{\small{\mathrm{rec}}}$,
\item[(b)]
 $\ \displaystyle  \liminf_{n\to\infty} ({\mathcal O}^{\pm}_r(z_n,x)\cap \rand^{\small{\mathrm{rec}}})  \supset {\mathcal O}_r(\xi,x)\cap \rand^{\small{\mathrm{rec}}}$.
 \end{enumerate}
\end{proposition}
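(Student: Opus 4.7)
The plan is to prove the two inclusions separately, relying on Lemma~\ref{joinrankone} (stability of rank one geodesics under endpoint perturbation), the upper semicontinuity of Alexandrov angles from Proposition~II.9.2 of \cite{MR1744486}, and Lemma~\ref{jointoweakrecurrent} to collapse $(\xi\eta)$ to a single geodesic line whenever $\eta\in\rand^{\small{\mathrm{rec}}}$. Throughout I will use the standard cone-topology fact that $z_n\to\xi\in\rand$ in $\ganz$ together with $d(w_n,z_n)\le r$ force $w_n\to\xi$, which follows from CAT$(0)$-convexity of the distance between the two geodesic rays $\sigma_{x,z_n}$ and $\sigma_{x,w_n}$.

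For (a), let $\eta\in\limsup_n({\mathcal O}^\pm_r(z_n,x)\cap\rand^{\small{\mathrm{rec}}})$; since ${\mathcal O}^-_r\subset{\mathcal O}^+_r$ it suffices to treat ${\mathcal O}^+_r$, so I pass to a subsequence with $\eta\in{\mathcal O}^+_r(z_{n_k},x)$. For each $k$ I pick $w_k\in B_r(z_{n_k})$ (or $w_k=z_{n_k}$ when $z_{n_k}\in\rand$) and a point $y_k\in B_r(x)$ on a geodesic line from $w_k$ to $\eta$, so $\angle_{y_k}(w_k,\eta)=\pi$. Properness of $\XX$ yields $y_k\to y\in\overline{B_r(x)}$ along a subsequence, and the cone-topology fact gives $w_k\to\xi$. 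Proposition~II.9.2 of \cite{MR1744486} then produces $\angle_y(\xi,\eta)\ge\pi$, so $y$ lies on a geodesic line joining $\xi$ and $\eta$; hence $(\xi,\eta)\in\ndpt\SX$ and $d(x,(\xi\eta))\le r$, placing $\eta$ in $({\mathcal O}_r(\xi,x)\cup\widetilde\partial{\mathcal O}_r(\xi,x))\cap\rand^{\small{\mathrm{rec}}}$. This closely mirrors the argument of Lemma~\ref{lem:boundaryofshadow}(a).

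For (b), fix $\eta\in{\mathcal O}_r(\xi,x)\cap\rand^{\small{\mathrm{rec}}}$ and pick a zero-width, $\is(\XX)$-recurrent $u\in\zero$ with $u^+=\eta$. Applying Lemma~\ref{jointoweakrecurrent} to $u$ produces a rank one geodesic of width $\le 0$ joining $\xi$ and $\eta$, so $(\xi\eta)$ is a single line and $v:=v(x;\xi,\eta)\in\zero$ with $\rho:=d(x,v(0))<r$. I pick $c\in(0,r-\rho)$ and invoke Lemma~\ref{joinrankone} for $v$ to obtain disjoint open neighborhoods $U^-\ni\xi$ and $U^+\ni\eta$ in $\ganz$ such that every $(\xi',\eta')\in U^-\times U^+$ is joined by a rank one geodesic $w$ with $d(w(t),v(0))<c$ for some $t\in\RR$. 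For $n$ large both $z_n\in U^-$ and (by the cone-topology fact) $B_r(z_n)\subset U^-$. Given $z\in B_r(z_n)$ I apply Lemma~\ref{joinrankone} to $(z,\eta)$ and, in the case $z\in\XX$, parametrize $w$ so that $w(0)=z$ and $w^+=\eta$; then $d(w(t),x)<c+\rho<r$, so $w(t)\in B_r(x)$. It remains to force $w(t)\in\sigma_{z,\eta}(\RR_+)$, i.e.\ $t\ge 0$. The Busemann cocycle gives $t=\bs_\eta(z,w(t))=\bs_\eta(z,v(0))-\bs_\eta(w(t),v(0))\ge\bs_\eta(z,v(0))-c$, so everything reduces to the claim $\bs_\eta(z,v(0))\to+\infty$ as $z\to\xi$ in $\ganz$. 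I will verify this by convexity of $\bs_\eta$ along $\sigma_{v(0),z}$ combined with the fact that the initial direction of $\sigma_{v(0),z}$ converges to the initial direction of $\sigma_{v(0),\xi}$, along which $\bs_\eta$ grows with slope $+1$ since $\xi$ and $\eta$ are antipodal endpoints of $v\in\zero$. The case $z_n\in\rand$ is immediate from Lemma~\ref{joinrankone} without the Busemann step.

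The hard part will be the sign control on $t$ in part (b): Lemma~\ref{joinrankone} only promises that the approximating rank one geodesic visits a neighborhood of $v(0)$ at some real parameter, and for $\eta$ to lie in ${\mathcal O}^-_r(z_n,x)$ this visit must occur on the forward ray from $z$ to $\eta$. Here the hypothesis $\eta\in\rand^{\small{\mathrm{rec}}}$ is crucial, since without the collapse of $(\xi\eta)$ to a single line the Busemann blow-up $\bs_\eta(z,v(0))\to+\infty$ can fail along sequences $z_n\to\xi$ approaching through a flat half-plane.
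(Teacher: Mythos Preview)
Your proof is correct. Part~(b) follows the paper's argument closely, but part~(a) takes a genuinely different and more elementary route.

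For (a), the paper does \emph{not} use the angle argument. Instead it exploits $\zeta\in\rand^{\small{\mathrm{rec}}}$ via Lemma~\ref{jointoweakrecurrent}: each ray $\sigma_n$ witnessing $\zeta\in{\mathcal O}^+_r(z_{k_n},x)$ is extended to a geodesic \emph{line} $u_n\in\zero$ with $u_n^-=\zeta$, normalized by $\bs_\zeta(x,u_n(0))=0$; one shows $u_n$ converges weakly to a fixed $w\in\zero$ joining $\zeta$ to $\xi$, then invokes Lemma~\ref{weakimpliesstrong} to upgrade to strong convergence and read off $d(x,(\xi\zeta))\le r$. Your approach---upper semicontinuity of Alexandrov angles at an accumulation point $y\in\overline{B_r(x)}$, exactly as in Lemma~\ref{lem:boundaryofshadow}(a)---is shorter and in fact never uses $\eta\in\rand^{\small{\mathrm{rec}}}$, so you actually prove the slightly stronger unrestricted inclusion $\limsup_n{\mathcal O}^+_r(z_n,x)\subset{\mathcal O}_r(\xi,x)\cup\widetilde\partial{\mathcal O}_r(\xi,x)$. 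The paper's route showcases the weak-to-strong convergence machinery of \cite{Ricks}; yours buys brevity.

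For (b), the paper does precisely what you do through the invocation of Lemma~\ref{joinrankone}, and then concludes in one line that for $z\in B_r(z_k)\subset U^-$ the geodesic joining $z$ to $\zeta$ passes through $B_\epsilon(w(0))\subset B_r(x)$, hence $\zeta\in{\mathcal O}^-_r(z_k,x)$. Your Busemann argument verifying $t\ge 0$ is correct (if $\bs_\eta(z_n,v(0))\le C$ along a subsequence, convexity of the horoball $\{\bs_\eta(\cdot,v(0))\le C\}$ together with $\sigma_{v(0),z_n}\to\sigma_{v(0),\xi}$ forces $v(-T)$ into that horoball for all $T$, contradicting $\bs_\eta(v(-T),v(0))=T$), but it is not needed: when $z\in\XX$, the object joining $z$ to $\eta$ furnished by Ballmann's Lemma~III.3.1 is a \emph{ray} $\sigma_{z,\eta}:[0,\infty)\to\XX$, so the parameter is automatically non-negative. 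Your caution is understandable because the paper's restatement (Lemma~\ref{joinrankone}) is phrased for lines $w\in\reg$, but the original lemma covers the ray case directly.
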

\begin{proof} 
Let us first prove (a), which 
will follow from
\[ \limsup_{n\to\infty} ({\mathcal O}^+_r(z_n,x)\cap \rand^{\small{\mathrm{rec}}})  \subset \bigl({\mathcal O}_r(\xi,x)\cup \widetilde\partial {\mathcal O}_{r}(\xi,x)\bigr)\cap\rand^{\small{\mathrm{rec}}}.\]
 If $\zeta\in  \limsup_{n\to\infty} ({\mathcal O}^{+}_r(z_n,x)\cap\rand^{\small{\mathrm{rec}}})$, then for all $n\in\NN$ there exists $k_n\ge n$ \st $\zeta\in {\mathcal O}^{+}_r(z_{k_n},x)\cap\rand^{\small{\mathrm{rec}}}$. 
  Moreover, by definition of $\rand^{\small{\mathrm{rec}}}$ and 
  Lemma~\ref{jointoweakrecurrent} 
  there exists $w\in\zero$ with $w^-=\zeta$ and $w^+=\xi$. Reparametrizing $w$ if necessary we may assume that its origin $w(0)$ satisfies $\bs_{\zeta}(x,w(0))=0$. 
 
 
 Passing to a subsequence of $(z_{k_n})$ if necessary we may assume that either $(z_{k_n})\subset\rand$  or $(z_{k_n})\subset\XX$. 
 
 Fix $n\in\NN$. If $z_{k_n}\in\rand$ we choose a geodesic line $u_n\in \ndpt^{-1}(\zeta, z_{k_n})$ with $u_n(\RR)\cap B_r(x)\ne \emptyset$. If $z_{k_n}\in\XX$ we first let $\sigma_n$ be a geodesic ray in the class of $\zeta$ with $\sigma_n(0)\in B_r(z_{k_n})$ and $\sigma_n(\RR_+)\cap B_r(x)\ne \emptyset$, and then $u_n\in\SX$  a geodesic line with $u_n^-=\zeta$ whose image in $\XX$ contains  $\sigma_n(\RR_+)$ (that is $-u_n\in\SX$  extends the ray $\sigma_n$). From $\zeta\in \rand^{\small{\mathrm{rec}}}$ and   Lemma~\ref{jointoweakrecurrent} we know that in both cases $u_n\in\zero$;
up to reparametrization we can further assume that $\bs_\zeta(x, u_n(0))=0$.

By choice of $u_n$ we further know that $d(x,u_n(\RR))<r$; we fix $s_n\in\RR$ \st $d(x, u_n(s_n))=d(x,u_n(\RR))$ (which is equivalent to $g^{s_n} u_n= v(x;\zeta,\xi)$). Then
\begin{align}\label{snbounded}
|s_n|&= \big|\bs_\zeta\bigl(u_n(0),u_n(s_n)\bigr)\big|= \big|\bs_\zeta\bigl(x,u_n(s_n)\bigr)\big|\le d\bigl(x,u_n(s_n)\bigr)<r.
\end{align}
%
In the easy case that  $(z_{k_n})\subset \rand$ we have $(u_n^+)=(z_{k_n})\to\xi$, so $(u_n)$ converges weakly to $w\in\zero$.  
 
Otherwise, for $n\in\NN$  we  choose $t_n\in\RR$ \st $u_n(t_n)=\sigma_n(0)\in B_r(z_{k_n})$.  Since $(z_{k_n})$ converges to $\xi$ we also have $u_n(t_n)\to\xi$, hence $(t_n)\nearrow \infty$. From the estimate (\ref{snbounded}) we get $d\bigl(x,u_n(0)\bigr)<2r$, so   $u_n(t_n)\to\xi$  implies $u_n^+\to\xi$, which proves that also in this case $(u_n)$ converges weakly  to $w\in\zero$. 

Passing to a subsequence if necessary we may now assume that the sequence $(s_n)$ from above converges to $s\in [-r,r]$ and that $(u_n)$ converges to $w$ in $\SX$ (by Lemma~\ref{weakimpliesstrong}). This finally gives 
\begin{align*}
 d(x,w(\RR)) &\le d\bigl(x,w(s)\bigr)\\
 &\le \lim_{n\to\infty} \bigl( \underbrace{d\bigl(x,u_n(s_n)\bigr)}_{<r}+ \underbrace{d\bigl(u_n(s_n),w(s_n)\bigr)}_{\to 0}+\underbrace{d\bigl(w(s_n),w(s)\bigr)}_{=|s_n-s|\to 0}\bigr)\le r.  \end{align*}

For the proof of (b) we let $\zeta\in {\mathcal O}_r(\xi,x)\cap\rand^{\small{\mathrm{rec}}}$ be arbitrary. By definition of $\rand^{\small{\mathrm{rec}}}$ and Lemma~\ref{jointoweakrecurrent} there exists  $w\in\zero$ with $w^-=\xi$, $w^+=\zeta$. Reparametrizing $w$ if necessary we may assume that $w=v(x;\xi,\zeta)$, hence $d(x,w(0))<r$. 

Since $B_r(x)$ is open,  there exists $\epsilon>0$ \st $B_{\epsilon}\bigl(w(0)\bigr)\subset B_r(x)$. According to Lemma~\ref{joinrankone} there exist neighborhoods $U$, $V\subset\ganz$ of $w^-$, $w^+$ \st any two points in $U$, $V$ can be joined by a rank one geodesic $u\in\reg$ with $d(u(0),w(0))<\epsilon$ and $\width(u)<2\epsilon$. As $z_n\to\xi=w^-$ there exists $n\in\NN$ \st for all $k\ge n$ we have $B_r(z_k)\subset U$ if $z_k\in\XX$ respectively $z_k\in U$ if $z_k\in\rand$; for these $k$ we immediately get $\zeta=w^+\in {\mathcal O}^-_r(z_k,x)\subset {\mathcal O}^+_r(z_k,x)$ (since $B_{\epsilon}\bigl(w(0)\bigr)\subset B_r(x)$).
\end{proof}

We now fix non-atomic finite Borel measures $\mu_-$, $\mu_+$ on $\rand$ with 
$\mu_\pm(\radlim)=\mu_\pm(\rand)$ and such that $\overline\mu\sim (\mu_-\otimes\mu_+)\ein_{\lower 0.4ex\hbox{$\scriptstyle\ndpt\reg$}}$ is a quasi-product geodesic current on $\ndpt\reg$ for which the constant $\Delta$ defined by (\ref{Deltadef}) is finite. 
We will need the following 
\begin{lemma}\label{boundaryhasmeasurezero}
Let $\xi\in\rand$, $x\in\XX$ and recall definition (\ref{otherboundaryofshadow}). Then the set 
\[\{r>0\colon \mu_+\bigl(\widetilde\partial {\mathcal O}_r(\xi,x)\bigr)>0\}\] is at most countable.
\end{lemma}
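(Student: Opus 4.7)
My plan is essentially a one-line observation combined with a standard pigeonhole argument for finite measures. The key point is that the family $\{\widetilde\partial \mathcal{O}_r(\xi, x) : r > 0\}$ is pairwise disjoint: if $\eta \in \rand$ satisfies $(\xi, \eta) \in \ndpt \SX$, then the value $d\bigl(x, (\xi\eta)\bigr)$ is a single well-defined non-negative real number, namely the distance from $x$ to the closed convex set $(\xi\eta) \subset \XX$. Consequently each such $\eta$ lies in $\widetilde\partial \mathcal{O}_r(\xi, x)$ for at most one value of $r$, so the sets $\widetilde\partial \mathcal{O}_r(\xi, x)$ form a pairwise disjoint family of subsets of $\rand$ as $r$ ranges over $(0,\infty)$.

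Once disjointness is in hand, I would apply the standard pigeonhole argument for the finite Borel measure $\mu_+$: for each $n \in \NN$, the set $\{r > 0 : \mu_+(\widetilde\partial \mathcal{O}_r(\xi, x)) \geq 1/n\}$ has cardinality at most $n \cdot \mu_+(\rand) < \infty$, since any pairwise disjoint family of measurable sets each of measure at least $1/n$ in a finite measure space must be finite. Writing
\[
\{r > 0 : \mu_+\bigl(\widetilde\partial \mathcal{O}_r(\xi, x)\bigr) > 0\} = \bigcup_{n \in \NN} \{r > 0 : \mu_+\bigl(\widetilde\partial \mathcal{O}_r(\xi, x)\bigr) \geq 1/n\}
\]
then exhibits this set as a countable union of finite sets, hence countable.

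The only ancillary point to verify is that each $\widetilde\partial \mathcal{O}_r(\xi, x)$ is $\mu_+$-measurable so that the above is meaningful; this presents no obstacle, as it is a level set of the lower semicontinuous function $\eta \mapsto d\bigl(x, (\xi\eta)\bigr)$ defined on $\{\eta \in \rand : (\xi, \eta) \in \ndpt \SX\}$, and one can pass to outer measure if desired. I do not anticipate any genuine difficulty in this lemma; the whole content is the disjointness observation together with the finiteness of $\mu_+$.
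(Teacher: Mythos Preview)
Your proof is correct and follows essentially the same approach as the paper: note that the sets $\widetilde\partial {\mathcal O}_r(\xi,x)$ are pairwise disjoint for distinct $r$, use finiteness of $\mu_+$ to conclude that $\{r>0 : \mu_+(\widetilde\partial {\mathcal O}_r(\xi,x))>1/n\}$ is finite for each $n$, and take the union over $n$. Your additional remarks on why disjointness holds and on measurability are more detailed than the paper's version but do not change the argument.
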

\begin{proof}
We first notice that the sets $\widetilde\partial {\mathcal O}_r(\xi,x)$ are disjoint for different values of $r$.
Hence by finiteness of $\mu_x$ we know that for  $n\in\NN$ arbitrary the set
\[A_n:=\{r>0\colon \mu_+(\widetilde\partial {\mathcal O}_r(\xi,x))>1/n\}\]
 is finite.
Therefore the set
$\displaystyle \   \{r>0\colon \mu_+(\widetilde\partial {\mathcal O}_r(\xi,x))>0\}={\textstyle \bigcup_{n\in\NN}} A_n\ $
is at most countable.
\end{proof}
From Proposition~\ref{liminfsupofshadows} 
we get the following  estimate on the $\mu_{+}$-measure of the small and large shadows with source in the neighborhood of a given boundary point.
\begin{corollary}\label{measureofshadowsisclose}
Let $\xi\in\rand$, $x\in\XX$ and $r>0$ \st 
\[ \mu_+\bigl({\mathcal O}_r(\xi,x)\bigr)>0\quad\text{and}\quad \mu_+\bigl(\widetilde\partial{\mathcal O}_r(\xi,x)\bigr)=0.\]
Then for all $\varepsilon>0$ there exists a neighborhood $U\subset\ganz$ of $\xi$ \st for all $z\in U$
\[ \e^{-\varepsilon} \mu_+\bigl({\mathcal O}_r(\xi,x)\bigr)<\mu_+\bigl({\mathcal O}_r^{\pm}(z,x)\bigr) <\e^{\varepsilon} \mu_+\bigl({\mathcal O}_r(\xi,x)\bigr).\]
\end{corollary}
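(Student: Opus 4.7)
The plan is to deduce the corollary from Proposition~\ref{liminfsupofshadows} by a Portmanteau-style argument (Fatou together with its reverse for the finite measure $\mu_+$), and then to promote the resulting sequential limit to a neighborhood statement by a contradiction argument.

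First I would verify the auxiliary fact that $\mu_+(\rand\setminus\rand^{\small{\mathrm{rec}}})=0$, so that the intersections with $\rand^{\small{\mathrm{rec}}}$ appearing in Proposition~\ref{liminfsupofshadows} are invisible to $\mu_+$. This follows from Theorem~\ref{currentisproduct}: the equality $(\mu_-\otimes\mu_+)(\ndpt\zero_\Gamma^{\small{\mathrm{rec}}})=\mu_-(\rand)\mu_+(\rand)$ together with Fubini implies that for $\mu_+$-almost every $\eta\in\rand$ there exists $v\in\zero_\Gamma^{\small{\mathrm{rec}}}$ with $v^+=\eta$; every such $v$ is $\is(\XX)$-recurrent since $\Gamma<\is(\XX)$, hence $\eta\in\rand^{\small{\mathrm{rec}}}$.

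Next, for any sequence $(z_n)\subset\ganz$ with $z_n\to\xi$, I apply Proposition~\ref{liminfsupofshadows}(a) and take $\mu_+$-measure; the reverse Fatou inequality for the finite measure $\mu_+$ yields
\[\limsup_{n\to\infty}\mu_+\bigl({\mathcal O}_r^{\pm}(z_n,x)\bigr)\le\mu_+\bigl({\mathcal O}_r(\xi,x)\cup\widetilde\partial{\mathcal O}_r(\xi,x)\bigr)=\mu_+\bigl({\mathcal O}_r(\xi,x)\bigr),\]
using the hypothesis $\mu_+(\widetilde\partial{\mathcal O}_r(\xi,x))=0$ together with the first step to remove the $\cap\rand^{\small{\mathrm{rec}}}$. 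Symmetrically, Proposition~\ref{liminfsupofshadows}(b) combined with the ordinary Fatou lemma gives $\liminf_{n\to\infty}\mu_+({\mathcal O}_r^{\pm}(z_n,x))\ge\mu_+({\mathcal O}_r(\xi,x))$. Hence $\mu_+({\mathcal O}_r^{\pm}(z_n,x))\to\mu_+({\mathcal O}_r(\xi,x))$ for every sequence $z_n\to\xi$.

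Finally, I would upgrade this sequential convergence to a neighborhood statement by contradiction. If for some $\varepsilon>0$ no neighborhood of $\xi$ yielded the claimed two-sided bound, one could select $z_n$ in a nested base of neighborhoods of $\xi$ violating it for either the $+$ or the $-$ shadow, producing a sequence $z_n\to\xi$ along which $\mu_+({\mathcal O}_r^{\pm}(z_n,x))$ fails to converge to $\mu_+({\mathcal O}_r(\xi,x))$. The assumption $\mu_+({\mathcal O}_r(\xi,x))>0$ is used precisely to convert the multiplicative bound $\e^{\pm\varepsilon}$ into an additive separation, so that such non-convergence would indeed contradict the preceding step. I expect no genuine obstacle beyond careful bookkeeping; the one delicate point is ensuring that (reverse) Fatou is applied correctly, which is automatic because $\mu_+$ is a finite Borel measure on the compact space $\rand$.
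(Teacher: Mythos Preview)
Your proposal is correct and follows essentially the same approach as the paper's proof: both first show $\mu_+(\rand^{\small{\mathrm{rec}}})=\mu_+(\rand)$ (the paper via Corollary~\ref{useFubini}, you via Theorem~\ref{currentisproduct} and Fubini directly), then sandwich $\liminf$ and $\limsup$ of $\mu_+({\mathcal O}_r^\pm(z_n,x))$ using Proposition~\ref{liminfsupofshadows} together with Fatou and reverse Fatou to obtain the sequential limit, and finally pass to a neighborhood. The only differences are presentational: you spell out the contradiction step for the neighborhood upgrade, which the paper leaves implicit with ``hence the claim''.
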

\begin{proof} 
We first recall the definition of $\zero_\Gamma^{\small{\mathrm{rec}}}$ from (\ref{definezerorec}) and notice that $\quotient{\Gamma}{\zero_\Gamma^{\small{\mathrm{rec}}}}$ has full  $m_\Gamma$-measure by Theorem~\ref{currentisproduct}. So according to Corollary~\ref{useFubini} we have  
\[
\mu_+\bigl(\{\zeta\in\rand\colon (\eta,\zeta)\in\ndpt \zero_\Gamma^{\small{\mathrm{rec}}}\ \text{ for } \ \mu_-\text{-almost every }\ \eta\in \rand\}\bigr)=\mu_+(\rand).\]
Hence from the obvious inclusion \[ \{\zeta\in\rand\colon (\eta,\zeta)\in\ndpt \zero_\Gamma^{\small{\mathrm{rec}}}\ \text{ for } \ \mu_-\text{-almost every }\ \eta\in \rand\}\subset \rand^{\small{\mathrm{rec}}}\] 
we obtain $\mu_+(\rand^{\small{\mathrm{rec}}})=\mu_+(\rand)$.  

Since $\mu_+$ is a finite Borel measure, 
Proposition~\ref{liminfsupofshadows}  implies  
\begin{align*}
\mu_+\bigl({\mathcal O}_r(\xi,x)\bigr)& =\mu_+\bigl({\mathcal O}_r(\xi,x)\cap \rand^{\small{\mathrm{rec}}}\bigr)\stackrel{\text{\scriptsize{(b)}}}{\le} 
\mu_+\bigl(\liminf_{n\to\infty} ({\mathcal O}^{\pm}_r(z_n,x)\cap\rand^{\small{\mathrm{rec}}})\bigr)\\
&\le \liminf_{n\to\infty} \mu_+\bigl({\mathcal O}^{\pm}_r(z_n,x)\cap\rand^{\small{\mathrm{rec}}}\bigr)\le \limsup_{n\to\infty} \mu_+\bigl({\mathcal O}^{\pm}_r(z_n,x)\cap\rand^{\small{\mathrm{rec}}}\bigr)\\
&\le  \mu_+\bigl(\limsup_{n\to\infty} ({\mathcal O}^{\pm}_r(z_n,x)\cap\rand^{\small{\mathrm{rec}}})\bigr) \\
&\stackrel{\text{\scriptsize{(a)}}}{\le}   \mu_+\bigl(({\mathcal O}_r(\xi,x)\cup \widetilde\partial {\mathcal O}_r(\xi,x))\cap\rand^{\small{\mathrm{rec}}}\bigr)= \mu_+\bigl({\mathcal O}_r(\xi,x)\bigr),
\end{align*}
because $\mu_+\bigl(\widetilde\partial{\mathcal O}_r(\xi,x)\bigr)=0$. So we conclude
\[ \lim_{n\to\infty} \mu_+\bigl({\mathcal O}^{\pm}_r(z_n,x)\bigr)=\lim_{n\to\infty} \mu_+\bigl({\mathcal O}^{\pm}_r(z_n,x)\cap \rand^{\small{\mathrm{rec}}} \bigr)=\mu_+\bigl({\mathcal O}_r(\xi,x)\bigr),\]
hence the claim.
\end{proof}

For a subset $A\subset \rand$ we next define the small and large cones
\begin{align}\label{slcones}
{\mathcal C}^-_{r}(x,A) &:= \{z\in\XX\colon {\mathcal O}^+_{r}(x,z)\subset A\},\\
{\mathcal C}^+_{r}(x,A) &:= \{z\in\XX \colon {\mathcal O}^+_{r}(x,z)\cap A\ne \emptyset\}.\nonumber
\end{align}
Notice that our definition of the small cones ${\mathcal C}^-_{r}$ differs slightly from Roblin's in order to get Lemma~\ref{smallcones}. Moreover, they are related to our large cones via
\[{\mathcal C}^-_{r}(x,A)\subset {\mathcal C}^+_{r}(x,A)\quad\text{and}\quad {\mathcal C}^-_{r}(x,A)= \ganz\setminus{\mathcal C}^+_{r}(x,\rand\setminus A).\]
From the latter equality  and (\ref{inclusionoflargeshadows}) we immediately get 
\begin{lemma}\label{changepoint}
Let $\rho>0$, $x_0\in B_\rho(x)$ and $y_0\in B_\rho(y)$. 
Then 
\begin{enumerate}
\item[(a)] $\quad y\in {\mathcal C}^+_{r}(x,A)\quad  \Longrightarrow \quad 
y_0 \in {\mathcal C}^+_{r+\rho}(x_0,A)$, 
\item[(b)] $\quad y\in {\mathcal C}^-_{r+\rho}(x,A)\quad  \Longrightarrow \quad 
y_0 \in {\mathcal C}^-_{r}(x_0,A)$.
\end{enumerate}
\end{lemma}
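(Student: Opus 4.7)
The plan is to deduce both parts directly from the inclusion (\ref{inclusionoflargeshadows}), which already does essentially all of the work; the lemma just repackages that inclusion at the level of cones. The duality ${\mathcal C}^-_{r}(x,A)= \ganz\setminus{\mathcal C}^+_{r}(x,\rand\setminus A)$ noted right before the statement could be used to reduce (b) to (a), but it is just as easy to handle the two parts in parallel, taking care to apply (\ref{inclusionoflargeshadows}) in opposite directions.

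For part (a), I would start by unwinding the definition: $y\in {\mathcal C}^+_{r}(x,A)$ means ${\mathcal O}^+_{r}(x,y)\cap A\ne \emptyset$. Since $d(x,x_0)<\rho$ and $d(y,y_0)<\rho$, the inclusion (\ref{inclusionoflargeshadows}) (with $x_0, y_0$ in the role of $x',y'$) gives ${\mathcal O}^+_{r}(x,y)\subset {\mathcal O}^+_{r+\rho}(x_0,y_0)$, so the larger shadow also meets $A$ non-trivially, i.e.~$y_0\in {\mathcal C}^+_{r+\rho}(x_0,A)$.

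For part (b), the hypothesis $y\in {\mathcal C}^-_{r+\rho}(x,A)$ unfolds as ${\mathcal O}^+_{r+\rho}(x,y)\subset A$. Here I would apply (\ref{inclusionoflargeshadows}) with the roles of the two pairs swapped, using $(x_0,y_0)$ as the unprimed pair and $(x,y)$ as the primed pair; this is legitimate since $d(x_0,x)<\rho$ and $d(y_0,y)<\rho$ as well. The resulting inclusion ${\mathcal O}^+_{r}(x_0,y_0)\subset {\mathcal O}^+_{r+\rho}(x,y)\subset A$ is exactly the statement $y_0\in {\mathcal C}^-_{r}(x_0,A)$.

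There is no genuine obstacle; the only point requiring care is keeping track of which side of (\ref{inclusionoflargeshadows}) one needs. In (a) the small-radius shadow sits on the left and the perturbation inflates the radius to $r+\rho$, which is exactly why the conclusion allows radius $r+\rho$. In (b) the set-theoretic containment ${\mathcal O}^+_{r}(x_0,y_0)\subset A$ that is needed in the conclusion forces the opposite application: one must place the new pair on the left so that the known containment at radius $r+\rho$ (around $(x,y)$) can be invoked on the right.
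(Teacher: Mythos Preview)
Your proof is correct and matches the paper's approach: the lemma is stated as an immediate consequence of the shadow inclusion (\ref{inclusionoflargeshadows}) together with the duality ${\mathcal C}^-_{r}(x,A)= \ganz\setminus{\mathcal C}^+_{r}(x,\rand\setminus A)$, and you unwind exactly this, handling (b) directly rather than via the duality (as you yourself note is equally easy).
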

%
This shows in particular that  for  $r<r'$ we have
\begin{equation}\label{coneesti}
{\mathcal C}^+_{r}(x,A)\subset {\mathcal C}^+_{r'}(x,A)\quad\text{and }\quad {\mathcal C}^-_{r}(x,A)\supset {\mathcal C}^-_{r'}(x,A).\end{equation}

In Sections~\ref{equidistribution} and~\ref{orbitcounting} we will frequently need the following 
\begin{lemma}\label{orbitpointsincones}
Let $x,y\in\XX$, $r>0$, and $\widehat V\subset\ganz$, $V\subset\rand$ be arbitrary open sets.
\begin{enumerate}
\item[(a)] For $A\subset \rand$ with $\overline{A}\subset \widehat V\cap\rand$ only finitely many $\gamma\in\Gamma$ satisfy
\[ \gamma y \in {\mathcal C}_r^{\pm} (x,A) \setminus \widehat V.\]
\item[(b)] For $\widehat A\subset \ganz$ with $\overline{\widehat A}\cap\rand\subset V$  only finitely many $\gamma\in\Gamma$ satisfy
\[ \gamma y \in \widehat A\setminus {\mathcal C}_r^{\pm} (x,V).\]
\end{enumerate}
\end{lemma}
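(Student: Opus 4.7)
The plan is to prove both statements by contradiction using a compactness / convergence argument based on the properness of $\XX$ and the discreteness of $\Gamma$. Assume for (a) that there exists an infinite sequence of distinct elements $(\gamma_n)\subset \Gamma$ with $\gamma_n y\in \mathcal{C}^\pm_r(x,A)\setminus \widehat V$. Since $\XX$ is proper and $\Gamma$ is discrete, $d(x,\gamma_n y)\to \infty$; passing to a subsequence, $\gamma_n y\to \xi\in \rand$. Because $\widehat V\subset\ganz$ is open and $\gamma_n y\in\ganz\setminus \widehat V$ for all $n$, we have $\xi\in \ganz\setminus \widehat V$, in particular $\xi\notin \widehat V\cap\rand$. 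The goal is to produce boundary points $\eta_n \in \mathcal{O}^+_r(x,\gamma_n y)\cap A$ (in the ``$+$'' case) or $\eta_n\in \mathcal{O}^+_r(x,\gamma_n y)\subset A$ (in the ``$-$'' case, using that the latter shadow is non-empty for $n$ large, e.g.\ by taking an extension through $\gamma_n y$ of a geodesic from a nearby base point, which exists since $\XX$ is geodesically complete) and then to show $\eta_n\to \xi$. This last convergence is the one technical step: choose $z_n\in B_r(x)$ and a geodesic ray $\sigma_n=\sigma_{z_n,\eta_n}$ meeting $B_r(\gamma_n y)$; after extracting $z_n\to z\in \overline{B_r(x)}$, the initial segments of $\sigma_n$ stay within distance $r$ of the segment from $z$ to $\gamma_n y$ up to a length tending to infinity (by CAT$(0)$ convexity of the distance function and the triangle inequality), so $\sigma_n$ converges in the compact-open topology to the ray from $z$ to $\xi$, which forces $\eta_n\to \xi$. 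Since $\eta_n\in A$ and $\overline A\subset \widehat V\cap\rand$, we would obtain $\xi\in \overline A\subset\widehat V$, contradicting $\xi\notin \widehat V$.

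For (b) we argue similarly: assume distinct $(\gamma_n)\subset\Gamma$ satisfy $\gamma_n y\in \widehat A\setminus \mathcal{C}^\pm_r(x,V)$. Properness and discreteness again force $d(x,\gamma_n y)\to\infty$, and after extracting a subsequence $\gamma_n y\to \xi\in \overline{\widehat A}\cap\rand\subset V$. In the ``$+$''-case, $\gamma_n y\notin \mathcal{C}^+_r(x,V)$ means $\mathcal{O}^+_r(x,\gamma_n y)\subset \rand\setminus V$; in the ``$-$''-case, $\gamma_n y\notin \mathcal{C}^-_r(x,V)$ means there exists $\eta_n\in \mathcal{O}^+_r(x,\gamma_n y)\setminus V$. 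In either case we produce points $\eta_n\in \mathcal{O}^+_r(x,\gamma_n y)$ belonging to the closed set $\rand\setminus V$. The same convergence argument as in (a) gives $\eta_n\to \xi$, and therefore $\xi\in\rand\setminus V$, contradicting $\xi\in V$.

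The main obstacle is the convergence $\eta_n\to\xi$ of shadow boundary points, but this is essentially standard CAT$(0)$ geometry: rays emanating from a bounded set and passing through points that diverge to $\xi\in\rand$ converge uniformly on compact subsets to a ray ending at $\xi$. The only mild subtlety is guaranteeing that the shadows $\mathcal{O}^+_r(x,\gamma_n y)$ are non-empty for large $n$ in the cases where we must produce an $\eta_n$; this is ensured by the geodesic completeness of $\XX$, which allows us to extend the segment from any $z_n\in B_r(x)$ through $\gamma_n y$ to a full geodesic ray whose endpoint then lies in $\mathcal{O}^+_r(x,\gamma_n y)$.
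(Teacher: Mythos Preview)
Your argument is correct and follows essentially the same route as the paper's own proof: argue by contradiction, extract a subsequence $\gamma_n y\to\xi\in\rand$, produce shadow points $\eta_n\in\mathcal{O}^+_r(x,\gamma_n y)$ with the appropriate membership property, and show $\eta_n\to\xi$ using CAT$(0)$ convexity of the distance function to derive a contradiction.

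The only notable difference is a small economy: the paper avoids treating both $\pm$-cases separately by observing the inclusions $\mathcal{C}^-_r(x,A)\subset\mathcal{C}^+_r(x,A)$ for part~(a) and $\widehat A\setminus\mathcal{C}^+_r(x,V)\subset\widehat A\setminus\mathcal{C}^-_r(x,V)$ for part~(b). This reduces (a) to the ``$+$'' case and (b) to the ``$-$'' case, and in both of these the existence of a suitable $\eta_n$ is automatic from the definitions (in (a)$^+$ because $\mathcal{O}^+_r(x,\gamma_n y)\cap A\ne\emptyset$; in (b)$^-$ because $\mathcal{O}^+_r(x,\gamma_n y)\not\subset V$). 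Your version instead handles all four cases and appeals to geodesic completeness to guarantee $\mathcal{O}^+_r(x,\gamma_n y)\ne\emptyset$ in the remaining two. That is fine in the paper's standing hypotheses, but the reduction via the cone inclusions is cleaner and shows the lemma does not actually require geodesic completeness.
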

\begin{proof}
We begin with the proof of (a) by contradiction. 
Assume that there exists a sequence $(\gamma_n)\subset\Gamma$ \st
$ \gamma_n y \in {\mathcal C}_r^+ (x,A) \setminus \widehat V$ for all $n\in\NN$. As $\Gamma$ is discrete, every accumulation point of $(\gamma_n y)\subset\XX$ belongs to $\rand$. Passing to a subsequence if necessary we will assume that $\gamma_n y\to \zeta\in\Lim\subset\rand$ as $n\to\infty$.   

From $\gamma_n y\in  {\mathcal C}_r^+ (x,A)$ we know that 
$  {\mathcal O}^+_{r}(x,\gamma_n y)\cap A\ne \emptyset$. We choose a geodesic line $v_n\in \SX$ with $v_n^+\in A$ whose image intersects 
$B_r(x)$ and then $ 
B_r(\gamma_n y)$. Up to reparametrization we can assume that $\bs_{v_n^+}(x,v_n(0))=0$ and $\bs_{v_n^+}(\gamma_n y, v_n(t_n))=0$ for some $t_n>0$. 
Then by an easy geometric estimate analogous to the one in the proof of Proposition~\ref{liminfsupofshadows} (a) we have $d(x,v_n(0))<2r$ and  $d\bigl(\gamma_n y, v_n(t_n)\bigr)<2r$.  By convexity of the distance function and $\sigma_{x,v_n^+}(\infty)=v_n(\infty)=v_n^+$ we get
\[ d\bigl(\sigma_{x,v_n^+}(T), v_n(T)\bigr)<2r\quad\text{for all}\quad  T>0.\] Hence
\[ d\bigl(\gamma_n y, \sigma_{x,v_n^+}(t_n)\bigr)\le d\bigl(\gamma_n y,v_n(t_n)\bigr)+d\bigl(v_n(t_n), \sigma_{x,v_n^+}(t_n)\bigr)<4r\]
which implies $v_n^+\to \zeta$ and therefore $\zeta\in\overline{A}\subset \widehat V\cap\rand$. 

On the other hand, as $\widehat V$ is open and $\gamma_n y \notin \widehat V$ for all $n\in\NN$, we obviously have $\zeta\notin \widehat V\cap \rand$, hence a contradiction.
The claim for ${\mathcal C}_r^- (x,A) \setminus \widehat V$ follows from the obvious inclusion ${\mathcal C}_r^- (x,A) \subset {\mathcal C}_r^+ (x,A) $.

For the proof of (b) we assume that there exists a sequence $(\gamma_n)\subset\Gamma$ \st
$ \gamma_n y \in \widehat A\setminus {\mathcal C}_r^- (x,V)$ for all $n\in\NN$. Passing to a subsequence if necessary we will assume as above that $\gamma_n y\to \zeta\in\Lim\subset\rand$ as $n\to\infty$.  Here $\gamma_n y\in \widehat A$ for all $n\in\NN$ obviously implies $\zeta\in\overline{\widehat A}\cap \rand\subset V $.

From $\gamma_n y\notin  {\mathcal C}_r^- (x,V)$ we know that 
$  {\mathcal O}^+_{r}(x,\gamma_n y)\not\subset V$. We choose a geodesic line $v_n\in \SX$ with $v_n^+\not\in V$ whose image intersects 
$B_r(x)$ and then $ 
B_r(\gamma_n y)$. As in the proof of (a) we get $v_n^+\to \zeta$, and therefore 
$\zeta\in\overline{\rand\setminus V}=\rand\setminus V$ since $V$ is open; this is an obvious contradiction to $\zeta\in V$.
Again, the claim for $\widehat A\setminus {\mathcal C}_r^+ (x,V)$ follows from the obvious inclusion ${\mathcal C}_r^+ (x,V) \supset {\mathcal C}_r^- (x,V) $.
%
\end{proof}

Before we proceed we will state some results concerning the following 
corridors first introduced  by  T.~Roblin (\cite{MR2057305}):
For
$r>0$  and $x,y\in\XX$ 
 we set
\begin{align}
{\mathcal L}_{r}(x,y) 
&=  \{(\xi,\eta)\in\ndpt\SX\colon \exists\, v\in\ndpt^{-1}(\xi,\eta)\ \exists\, t>0\ \st \label{Lrc} \\ 
&\hspace*{4.8cm}v(0)\in B_r(x),\ v( t)\in B_r(y) \}.\nonumber
\end{align}
Notice that if $(\xi,\eta)\notin\ndpt\zero$, then the element $v\in\ndpt^{-1}(\xi,\eta)$ satisfying the condition on the right-hand side is in general different from $v(x;\xi,\eta)$  (and from $g^{-t}v(y;\xi,\eta)$).

\begin{remark}\label{Pittetcounterex}
The inclusion ${\mathcal O}^-_{r}(y,x)\times  {\mathcal O}^-_{r}(x,y)\subset {\mathcal L}_{r}(x,y) $ claimed 
in the middle of page 58 of \cite{MR2057305}  is wrong even in the hyperbolic plane $\mathbb{H}^2$ as the following counterexample provided by C.~Pittet shows: Let $x=1+\mathrm{i}$,  $y=\e^4+\mathrm{i} \e^4 $ and $r=d(x,\sqrt2\mathrm{i})=d(y,\sqrt2\e^4\mathrm{i})$ (which is equal to the hyperbolic distance of $x$ respectively $y$ to the imaginary axis). Then elementary hyperbolic geometry shows that the geodesic line \[\sigma:\RR\to \mathbb{H}^2,\quad t\mapsto \e^{t}\mathrm{i}\] satisfies $\sigma(-\infty)\in \mathcal{O}_r^-(y,x)$, $\sigma(\infty)\in \mathcal{O}_r^-(x,y)$, but $\bigl(\sigma(-\infty),\sigma(\infty)\bigr)\notin \mathcal{L}_r(x,y)$ (since $\sigma(\RR)$ is tangent to the open balls $B_r(x)$ and $B_r(y)$). Notice in particular that  none of the sets $\mathcal{O}_r^-(y,x)$, $\mathcal{O}_r^-(x,y)$ is open.\\
As a replacement for the above inclusion we will prove Lemma~\ref{smallcones} below.
\end{remark}
From here on we fix $r>0$, $\gamma\in\is(\XX)$, points $x$, $y\in\XX$ and subsets $ A,B\subset \rand$. 
The following  results relate  corridors to  cones and large shadows. The proof of the first one is straightforward.

\begin{lemma}\label{largecones}
If $\  (\zeta,\xi)\in  {\mathcal L}_{r}(x,\gamma y) \cap  (\gamma B\times A ),$ then 
\begin{align*}
(\gamma y, \gamma^{-1}x) & \in {\mathcal C}^+_{r}(x,A)\times {\mathcal C}^+_{r}(y,B)\ \text{ and}\quad 
( \zeta,\xi )\in  {\mathcal O}^+_{r}(\gamma y,x)\times {\mathcal O}^+_{r}(x,\gamma y).
\end{align*}
\end{lemma}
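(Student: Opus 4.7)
The statement unpacks directly from the definitions, so the plan is a short direct verification rather than any genuinely clever argument. Let me outline the four bookkeeping steps.

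First I would unwind the hypothesis $(\zeta,\xi)\in {\mathcal L}_r(x,\gamma y)$: by definition (\ref{Lrc}) there exists a parametrized geodesic line $v\in\ndpt^{-1}(\zeta,\xi)$ and a parameter $t>0$ with $v(0)\in B_r(x)$ and $v(t)\in B_r(\gamma y)$. The hypothesis $(\zeta,\xi)\in\gamma B\times A$ further gives $\xi\in A$ and $\gamma^{-1}\zeta\in B$. All four conclusions of the lemma will be read off from this single configuration.

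Next I would verify the two shadow memberships. Since $v(0)\in B_r(x)$ and the forward ray $\sigma_{v(0),\xi}$ coincides with $t\mapsto v(t)$ for $t\ge 0$, it passes through $v(t)\in B_r(\gamma y)$; thus $\xi\in {\mathcal O}_r^+(x,\gamma y)$ by taking $z:=v(0)$ in the definition of the large shadow. Symmetrically the reversed ray $s\mapsto v(t-s)$ issuing from $v(t)\in B_r(\gamma y)$ reaches $v(0)\in B_r(x)$ at time $s=t$ and represents $\zeta=v^-$, so $\zeta\in {\mathcal O}_r^+(\gamma y,x)$ by taking $z:=v(t)$.

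Finally, from these shadow memberships I read off the two cone memberships using definition (\ref{slcones}). The relation $\xi\in {\mathcal O}_r^+(x,\gamma y)\cap A$ shows ${\mathcal O}_r^+(x,\gamma y)\cap A\ne\emptyset$, whence $\gamma y\in {\mathcal C}_r^+(x,A)$. For the second cone membership I would use $\is(\XX)$-equivariance of the shadow construction: applying $\gamma^{-1}$ to $\zeta\in {\mathcal O}_r^+(\gamma y,x)$ yields $\gamma^{-1}\zeta\in {\mathcal O}_r^+(y,\gamma^{-1}x)$, and since $\gamma^{-1}\zeta\in B$ we conclude that ${\mathcal O}_r^+(y,\gamma^{-1}x)\cap B\ne\emptyset$, that is $\gamma^{-1}x\in {\mathcal C}_r^+(y,B)$.

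There is no real obstacle here; the only point where one needs to pay minor attention is the $\is(\XX)$-equivariance of ${\mathcal O}_r^+$, which is immediate from the fact that isometries send balls to balls and preserve geodesic rays together with their endpoints in $\rand$.
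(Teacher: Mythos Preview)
Your proposal is correct and matches the paper's approach: the paper simply states that the proof is straightforward and omits it entirely, and your four-step verification from the definitions is exactly the intended routine check.
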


 \begin{lemma}\label{smallcones}
If $(\gamma y, \gamma^{-1}x)  \in {\mathcal C}^-_{r}(x,A)\times {\mathcal C}^-_{r}(y,B)$, then  
\begin{align*}
 {\mathcal L}_{r}(x,\gamma y) \cap (\gamma B\times A)\supset \{(\zeta, \xi)\in\rand\times\rand\colon \xi\in 
 {\mathcal O}^-_{r}(x,\gamma y),\ \zeta\in {\mathcal O}_{r}(\xi,x)\}.
\end{align*}
\end{lemma}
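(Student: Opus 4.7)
\medskip
\noindent\textbf{Plan of proof.}
The plan is to start with an arbitrary pair $(\zeta,\xi)$ with $\xi\in\mathcal{O}^-_r(x,\gamma y)$ and $\zeta\in\mathcal{O}_r(\xi,x)$, explicitly build a witnessing geodesic for $\mathcal{L}_r(x,\gamma y)$, and then separately verify $\xi\in A$ and $\zeta\in\gamma B$ using the two small-cone hypotheses. Unwinding the definition of $\mathcal{O}_r(\xi,x)$ from the paragraph following the definition of shadows, $\zeta\in\mathcal{O}_r(\xi,x)$ gives a geodesic line $w\in\ndpt^{-1}(\xi,\zeta)$ with $w(0)\in B_r(x)$. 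Setting $v:=-w\in\SX$, I obtain a parametrised line with $v^-=\zeta$, $v^+=\xi$ and $v(0)=w(0)\in B_r(x)$.

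Next I would use $\xi\in\mathcal{O}^-_r(x,\gamma y)$ applied to the point $z=v(0)\in B_r(x)$: by definition of the small shadow, the geodesic ray $\sigma_{v(0),\xi}(\RR_+)$ meets $B_r(\gamma y)$. Since $v^+=\xi$ this ray is exactly the positive half of $v$, so there exists $t>0$ with $v(t)\in B_r(\gamma y)$. Together with $v(0)\in B_r(x)$ and $\ndpt v=(\zeta,\xi)\in\ndpt\SX$, this exactly verifies the defining condition (\ref{Lrc}) for $(\zeta,\xi)\in\mathcal{L}_r(x,\gamma y)$.

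It then remains to show the inclusion in $\gamma B\times A$. For the $A$-coordinate, I note $\mathcal{O}^-_r(x,\gamma y)\subset\mathcal{O}^+_r(x,\gamma y)$, and the hypothesis $\gamma y\in\mathcal{C}^-_r(x,A)$ says $\mathcal{O}^+_r(x,\gamma y)\subset A$, so $\xi\in A$. For the $\gamma B$-coordinate I would translate everything by $\gamma^{-1}$: the geodesic $\gamma^{-1}v$ joins $\gamma^{-1}\zeta$ to $\gamma^{-1}\xi$, and satisfies $\gamma^{-1}v(0)\in B_r(\gamma^{-1}x)$ and $\gamma^{-1}v(t)\in B_r(y)$. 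Taking $z=\gamma^{-1}v(t)\in B_r(y)$, the geodesic ray $s\mapsto\gamma^{-1}v(t-s)$ from $z$ to $\gamma^{-1}\zeta$ passes through $\gamma^{-1}v(0)\in B_r(\gamma^{-1}x)$ at time $s=t>0$. This is exactly the condition $\gamma^{-1}\zeta\in\mathcal{O}^+_r(y,\gamma^{-1}x)$, and the hypothesis $\gamma^{-1}x\in\mathcal{C}^-_r(y,B)$ then yields $\gamma^{-1}\zeta\in B$, i.e.\ $\zeta\in\gamma B$.

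I do not expect any real obstacle here: the statement is essentially a bookkeeping lemma unwinding the definitions, and the only point where one needs to be careful is consistently tracking the orientations (the flip $v=-w$ is needed because $\mathcal{O}_r(\xi,\cdot)$ is parametrised with $\xi$ as the negative endpoint, whereas $\mathcal{L}_r$ expects the first coordinate as the negative endpoint of $v$) and remembering to apply the small-cone hypothesis at the correct base point for each coordinate. The slight asymmetry between the two coordinates -- $\xi$ comes from a strong (small) shadow, whereas $\zeta$ comes only from the ordinary shadow $\mathcal{O}_r(\xi,x)$ -- is precisely what is needed so that once $v$ is produced, the positive half of $v$ can be pushed into $B_r(\gamma y)$ by the small-shadow hypothesis on $\xi$.
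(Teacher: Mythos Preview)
Your proof is correct and follows essentially the same approach as the paper: build the witnessing geodesic by flipping a line coming from $\zeta\in\mathcal{O}_r(\xi,x)$, push its positive half into $B_r(\gamma y)$ using the small-shadow condition on $\xi$, and then read off $\xi\in A$ and $\zeta\in\gamma B$ from the two small-cone hypotheses. The only cosmetic differences are that the paper picks the specific geodesic $w=v(x;\xi,\zeta)$ rather than an arbitrary one, and deduces $\zeta\in\mathcal{O}^+_r(\gamma y,x)$ directly from $(\zeta,\xi)\in\mathcal{L}_r(x,\gamma y)$ before translating by $\gamma^{-1}$, whereas you translate first and then verify the shadow condition by hand; these are interchangeable.
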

\begin{proof}
From $\ \zeta\in {\mathcal O}_{r}(\xi,x)\ $ we know that the geodesic line $w=v(x;\xi,\zeta)\in\SX$ defined by (\ref{orthogonalproj}) has origin $w(0)\in B_r(x)$. Then $v:=-w\in\ndpt^{-1}(\zeta,\xi)$ satisfies $v(0)\in B_r(x)$, so  $v^+=\xi\in  {\mathcal O}^-_{r}(x,\gamma y)\ $ implies $v(t)=\sigma_{v(0),\xi}(t)\in B_{r}(\gamma y)$ for some $t>0$. We conclude 
$\ (\zeta,\xi)\in  {\mathcal L}_{r}(x,\gamma y)$.

It remains  to prove that $\zeta\in \gamma B$ and $\xi\in A$.  By definition~(\ref{slcones})  $\gamma y\in  {\mathcal C}^-_{r}(x,A)$ immediately gives ${\mathcal O}^-_{r}(x,\gamma y)\subset {\mathcal O}^+_{r}(x,\gamma y)\subset A$, hence $\xi\in A$. 
Moreover, from $(\zeta,\xi)\in  {\mathcal L}_{r}(x,\gamma y)$ we directly get $\zeta\in {\mathcal O}^+_{r}(\gamma y,x)$. So  $\gamma^{-1} \zeta\in {\mathcal O}^+_{r}( y,\gamma^{-1} x)$, and from $\gamma^{-1}x\in {\mathcal C}^-_{r}(y,B)$ we know that   $
{\mathcal O}^+_{r}( y,\gamma^{-1} x)\subset B$  according to definition~(\ref{slcones}). Hence $\gamma^{-1}\zeta\in B$ which is equivalent to $\zeta\in\gamma B$.
\end{proof}

 We will further need the following Borel subsets of $\SX$ which up to small details were already introduced by T.~Roblin in \cite{MR2057305}: 
 \begin{eqnarray}\label{Kpm}
  && K_r(x) =  \{ g^s v(x;\xi,\eta) \colon (\xi,\eta)\in \ndpt\zero\ \text{with}\ d(x,(\xi\eta))<r,\ s\in (-r/2,r/2)\},\nonumber\\
&& K_r^+(x,A) = \{ v\in K_r(x)\colon v^+\in A\}=:K^+ ,\\
&& K_r^-(y,B)= \{ w\in K_r( y) \colon  w^-\in B\}=:K^-.\nonumber
 \end{eqnarray}
Notice that by definition the orbit of an element $v\in\zero $ either never enters one of the sets above or spends precisely time $r$ in them.
 
Moreover, we have the following relation to  the corridors ${\mathcal L}_{r}(x,\gamma y)$ introduced in~(\ref{Lrc}):
\begin{lemma}\label{K+-corridor} For all $\gamma\in\is(\XX)$ with $d(x,\gamma y)\ge 3r$  
 we have
\[ \ndpt\big(\{  K^+\cap g^{-t}\gamma  K^-\colon t>0 \}\big)= {\mathcal L}_{r}(x,\gamma y)\cap \ndpt\zero\cap (\gamma B\times A) \] 
\end{lemma}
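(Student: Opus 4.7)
The plan is to prove the two inclusions separately by unpacking the definitions of $K_r(x)$, $K_r(y)$, $K^\pm$ and ${\mathcal L}_r$, and using the hypothesis $d(x,\gamma y)\ge 3r$ together with convexity of the distance function in the CAT($0$) setting to force the relevant time parameters to be strictly positive. The bookkeeping reduces to translating between the ``foot of perpendicular'' description of $K_r$ and the ``geodesic visiting two small balls'' description of ${\mathcal L}_r$.

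For $\subset$, assume $v\in K^+\cap g^{-t}\gamma K^-$ with $t>0$. Writing $v=g^{s_1}v(x;v^-,v^+)$ and $\gamma^{-1}g^{t}v=g^{s_2}v(y;\gamma^{-1}v^-,\gamma^{-1}v^+)$ with $s_1,s_2\in(-r/2,r/2)$, I read off $(v^-,v^+)\in\ndpt\zero$, $v^-\in\gamma B$, $v^+\in A$, and that $v(-s_1)$ is the foot of the perpendicular from $x$ to the unique geodesic joining $v^\pm$, with $d(x,v(-s_1))<r$; analogously $v(t-s_2)$ is the foot from $\gamma y$, with $d(\gamma y,v(t-s_2))<r$. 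The reparametrisation $v':=g^{-s_1}v$ then satisfies $v'(0)\in B_r(x)$ and $v'(t')\in B_r(\gamma y)$ with $t':=t-s_2+s_1$, so the only thing to check is $t'>0$. The triangle inequality gives $|t'|=d(v(-s_1),v(t-s_2))\ge d(x,\gamma y)-2r\ge r$, while $t'>t-r>-r$; combined, this forces $t'>r>0$.

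For $\supset$, given $(\zeta,\xi)\in{\mathcal L}_r(x,\gamma y)\cap\ndpt\zero\cap(\gamma B\times A)$, fix a witness $v'\in\ndpt^{-1}(\zeta,\xi)$ with $v'(0)\in B_r(x)$, $v'(t'')\in B_r(\gamma y)$, $t''>0$, and set $u:=v(x;\zeta,\xi)$. Since $(\zeta,\xi)\in\ndpt\zero$, the geodesic is unique up to reparametrisation, so $v'=g^\sigma u$ for some $\sigma$; as $u(0)$ minimises $d(x,\cdot)$ and $d(x,u(\sigma))<r$, one obtains $u\in K^+$ with trivial shift. Let $t^*$ be the $u$-time realising the minimum of $d(\gamma y,u(\cdot))$; then $w:=\gamma^{-1}g^{t^*}u=v(y;\gamma^{-1}\zeta,\gamma^{-1}\xi)$ belongs to $K^-$ and $g^{t^*}u=\gamma w$, so it suffices to show $t=t^*>0$. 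Here I invoke convexity twice: first, convexity of $t\mapsto d(x,u(t))$ (minimum at $0$, value $<r$ at $\sigma$) combined with $d(x,u(t''+\sigma))\ge d(x,\gamma y)-r>2r$ rules out $t''+\sigma\le 0$; second, convexity of $t\mapsto d(\gamma y,u(t))$ with the values $d(\gamma y,u(0))>2r$ and $d(\gamma y,u(t''+\sigma))<r$ together with $t''+\sigma>0$ rules out $t^*\le 0$. Finally, $|t^*|=d(u(0),u(t^*))\ge d(x,\gamma y)-2r\ge r$ upgrades $t^*>0$ to $t^*>r$.

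The crux of the argument, and the only real obstacle, is precisely this sign check. The naive candidate $v'$ coming from ${\mathcal L}_r$ will typically not already lie in $K_r(x)$ because the foot of the perpendicular from $x$ can sit outside the window $v'(-r/2,r/2)$; passing to the canonical parametrisation $u=v(x;\zeta,\xi)$ is what creates the problem that $t^*$ could a priori be negative, and the $3r$-gap between $x$ and $\gamma y$ is exactly what gives enough room for the two convexity arguments to close.
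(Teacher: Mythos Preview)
Your proof is correct and follows essentially the same strategy as the paper's: in both directions one passes to the canonical parametrisation $u=v(x;\zeta,\xi)$ and to the time $\tau$ (your $t'$ respectively $t^*$) for which $g^\tau u=v(\gamma y;\zeta,\xi)$, and then checks $\tau>0$ using $d(x,\gamma y)\ge 3r$.

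The only noteworthy difference is in rigour rather than method. For ``$\subset$'' the paper simply asserts $|d(x,\gamma y)-\tau|<2r$ and deduces $\tau>r$, which tacitly uses the relation $\tau=t+s_1-s_2>-r$; you spell this out. For ``$\supset$'' the paper writes ``$\tau>r>0$ as above'', although here there is no a priori $t>0$ to relate $\tau$ to, and an extra step is genuinely needed; your two convexity arguments (first forcing $t''+\sigma>0$, then $t^*>0$) supply exactly that step. So your argument is the same in spirit but fills a small gap the paper leaves to the reader.
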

\begin{proof} 
For the  inclusion ``$\subset$" we let $v\in K^+\cap g^{-t}\gamma K^-$ for some $t>0$. Then obviously $(\zeta,\xi):= (v^-,v^+)\in\ndpt\zero$,   $\xi=v^+\in A$ and $\zeta=v^-\in\gamma B$.  Now consider 
$u:= v(x;\zeta,\xi)\in\zero$ and let $\tau\in\RR$ \st 
\[ v(\gamma y, \zeta,\xi) = g^{\tau} u;\] such $\tau$ exists because $(\zeta,\xi)\in\ndpt\zero$. From the definition of $K_r(x)$ and $K_r(\gamma y)$
we further get $|d(x,\gamma y)-\tau|<2r$; since  $d(x,\gamma y)\ge 3r$ this implies $\tau>r>0$. 
Hence $(\zeta,\xi)=(u^-, u^+)\subset {\mathcal L}_{r}(x,\gamma y)$. 

For the converse inclusion ``$\supset$" we let $(\zeta,\xi)\in {\mathcal L}_{r}(x,\gamma y) \cap \ndpt\zero \cap (\gamma B\times A) $ be arbitrary. Then  by definition~(\ref{Lrc}) there exists $v\in \zero $ and $t'>0$ with 
\[(v^-,v^+)=(\zeta,\xi),\quad  d(x, v(0))<r\quad\text{and }\ d\bigl(\gamma y, v(t')\bigr)<r.\]
As above we set  
$u:= v(x;\zeta,\xi)$ and let $\tau\in\RR$ \st 
\[ v(\gamma y, \zeta,\xi) = g^{\tau} u.\] 
Since $d(x,u(0))\le d(x,v(0))<r$ and $u^+=\xi\in A$ we have $u\in K^+$, and 
from  $d(\gamma y, u(\tau))\le d(\gamma y,v(t'))<r$ and  $u^-=\zeta\in\gamma B$ we further get  $
 g^{\tau} u\in\gamma K^-$. Moreover we have $\tau>r>0$ as above, so the claim is proved. 
%
\end{proof}

\section{Ricks' Bowen-Margulis measure and some useful estimates}\label{RicksBMestimates}
As before  $\XX$ will denote a proper Hadamard space 
and $\Gamma<\is(\XX)$ a discrete rank one group with $\zero_\Gamma\ne\emptyset$. 
In order to get the equidistribution result Theorem~B from the introduction we will have to work with the so-called Ricks' Bowen-Margulis measure: This is the Ricks' measure from  Section~\ref{Mixing} associated to a particular  quasi-product geodesic current $\overline\mu$. We are going to describe this geodesic current now.  
\begin{definition}\label{confdensity}
A $\delta$-dimensional $\Gamma$-invariant  {\hd conformal density} is a continuous map $\mu\,$ of $\XX$ into the cone of positive finite Borel
measures on $\rand$ such that for all $x$, $y\in\XX$ and every $\gamma\in\Gamma$  we have
\begin{align}\label{conformality}
\nonumber &\supp(\mu_x)\subset \Lim,\\
\nonumber &\gamma_*\mu_x=\mu_{\gamma x},\quad\text{where } \ \gamma_*\mu_x(E):=\mu_x(\gamma^{-1}E)\quad\text{for all Borel sets }\ E\subset\rand,\\
  &\frac{\d \mu_x}{\d \mu_y}(\eta)=\e^{\delta \bs_{\eta}(y,x)} \quad\text{for any }\ \eta\in\supp(\mu_x).
\end{align}
\end{definition}
Recall the definition 
of the critical exponent $ \delta_\Gamma$ from (\ref{critexpdef}) and notice that in our setting it is strictly positive, since $\Gamma$ contains  a non-abelian free subgroup generated by two independent rank one elements. 
For $\delta=\delta_\Gamma$ a conformal density as above can be explicitly constructed following the idea of S.~J.~Patterson \mbox{(\cite{MR0450547})} originally used for Fuchsian groups (see for example \cite[Lemma 2.2]{MR1465601}). 
From here on we will therefore fix a $\delta_\Gamma$-dimensional $\Gamma$-invariant conformal density $\mu=(\mu_x)_{x\in\XX}$. 


With the Gromov product from (\ref{GromovProd}) we will now consider as in Section~7 of  \cite{Ricks} and in Section~8 of \cite{LinkHTS} for $x\in\XX$ the  geodesic current $\overline{\mu}_{x}\,$ on  
$\ndpt\SX \subset\rand\times\rand$ defined by 
\[ d\overline{\mu}_x(\xi,\eta)=\e^{2\delta_\Gamma\Gr_x(\xi,\eta)} {\mathbbm 1}_{\ndpt\mathcal{R}}(\xi,\eta)\d\mu_x(\xi)\d\mu_x(\eta).\]
%
As $\overline\mu_x$ does not depend on the choice of $x\in\XX$ we will write $\overline\mu$ in the sequel. 

Since we want to apply Theorem~\ref{mixthm} we will assume that $\mu_x(\radlim)=\mu_x(\rand)$; in view of 
Hopf-Tsuji-Sullivan dichotomy (Theorem 10.2 in \cite{LinkHTS}) this  is equivalent to the fact that  $\Gamma$ is divergent. Moreover, it  is 
well-known that in this case the conformal density $\mu$ from above is non-atomic and 
unique  up to scaling. 
So Theorem~\ref{currentisproduct} implies that for all $x$, $y\in\XX$ we have
 \begin{equation}\label{overlinemudef} 
d\overline{\mu}(\xi,\eta)=\e^{2\delta_\Gamma\Gr_x(\xi,\eta)}\d\mu_x(\xi)\d\mu_x(\eta)=\e^{2\delta_\Gamma\Gr_y(\xi,\eta)}\d\mu_y(\xi)\d\mu_y(\eta) 
\end{equation}
and \[(\mu_x\otimes\mu_x)(\ndpt\zero_\Gamma^{\small{\mathrm{rec}}})=(\mu_x\otimes\mu_x)(\ndpt \zero)=\mu_x(\rand)^2.\] 
The Ricks' measure $m_\Gamma$ on $\quotient{\Gamma}{\SX}$ associated to the geodesic current $\overline\mu$ from (\ref{overlinemudef}) is called the {\hl Ricks' Bowen-Margulis measure}. It generalizes the well-known Bowen-Margulis measure in the CAT$(-1)$-setting. Moreover, for the measure $m$ from which it descends we have the formula ~(\ref{measureformula}). Notice also that if $\XX$ is a manifold and $\Gamma$ is cocompact, then Ricks' Bowen-Margulis measure is equal to the measure of maximal entropy $m^{\text{\scriptsize Kn}}_\Gamma$ described in \cite{MR1652924} (this is Knieper's measure associated to $\overline\mu$ from (\ref{overlinemudef})). 
%
We further remark that the constant $\Delta$ defined in~(\ref{Deltadef}) is equal to $2 \delta_\Gamma$ in this case (compare the last paragraph in Section~8 of \cite{LinkHTS}), hence in particular finite. 

Fix $r>0$,  points $x$, $y\in\XX$ and subsets $A$, $B\subset\rand$. We will first compute the measure of the sets introduced in~(\ref{Kpm}). 
From (\ref{measureformula}),  
(\ref{overlinemudef}) and the remark below (\ref{Kpm}) we get
\begin{align*}
m(K^+)&=\int_{\ndpt\zero}\d\mu_x(\xi)\d\mu_x(\eta)\e^{2\delta_\Gamma \Gr_x(\xi,\eta)} \int {\mathbbm 1}_{K^+}\bigl(g^s v(x;\xi,\eta)\bigr)\d s\\
& =r \int_A \d \mu_x(\xi)\int_{\mathcal{O}_r(\xi,x)} \d \mu_x(\eta)\e^{2\delta_\Gamma \Gr_x(\xi,\eta)}, \nonumber
\end{align*}
and similarly
\[
m(K^-)=r \int_B \d \mu_y(\eta)\int_{\mathcal{O}_r(\eta,y)} \d \mu_y(\xi)\e^{2\delta_\Gamma \Gr_y(\xi,\eta)} .\]
From the non-negativity of the Gromov-product (\ref{GromovProd}) and the fact that
\[ \Gr_x(\xi,\eta)\le r\quad\text{if}\quad \eta\in\mathcal{O}_r(\xi,x)\]
we further get the useful estimates
\begin{align}\label{measureK+}
r \int_A \d \mu_x(\xi)\mu_x\bigl({\mathcal{O}_r(\xi,x)}\bigr)&\le m(K^+)\le r \e^{2\delta_\Gamma r} \int_A \d \mu_x(\xi)\mu_x\bigl({\mathcal{O}_r(\xi,x)}\bigr) , \\
\nonumber r \int_B \d \mu_y(\eta)\mu_y\bigl({\mathcal{O}_r(\eta,y)}\bigr)&\le m(K^-)\le r \e^{2\delta_\Gamma r} \int_B \d \mu_y(\eta)\mu_x\bigl({\mathcal{O}_r(\eta,y)}\bigr).
\end{align}

We continue with the important 
\begin{lemma}\label{flowintegrals}
Let $T_0>6r$, $T>T_0+3r$, $\gamma\in\Gamma$,  
$(\xi,\eta)\in {\mathcal L}_r(x,\gamma y)\cap \ndpt\zero$ and $s\in (-r/2,r/2)$. Then
\begin{enumerate}
\item[(a)] $\ \displaystyle \int_{T_0}^{T+3r}  \e^{\delta_\Gamma t} \mathbbm{1}_{K_r(\gamma y)}\bigl(g^{t+s} v(x;\xi,\eta)\bigr)\d t \ge r\cdot  \e^{-3\delta_\Gamma r} \e^{\delta_\Gamma d(x,\gamma y)}\ \  $\\ \hspace*{5cm} if  $\ \ T_0+3r<d(x,\gamma y)\le T$,
\item[(b)] 
$\ \displaystyle \int_{T_0}^{T-3r}  \e^{\delta_\Gamma t} \mathbbm{1}_{K_r(\gamma y)}\bigl(g^{t+s} v(x;\xi,\eta)\bigr)\d t \le r\cdot  \e^{3\delta_\Gamma r}  \e^{\delta_\Gamma d(x,\gamma y)} $,  \\
and $\ \ \displaystyle \int_{T_0}^{T-3r}  \e^{\delta_\Gamma t} \mathbbm{1}_{K_r(\gamma y)}\bigl(g^{t+s} v(x;\xi,\eta)\bigr)\d t =0\ \ $\\ \hspace*{5cm} if  $\ \ d(x,\gamma y)\le T_0-3r\ \ $ or $\ \ d(x,\gamma y)>T$.
\end{enumerate}
\end{lemma}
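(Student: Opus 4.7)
The plan is to reduce both parts of the lemma to a one-dimensional integral over an interval of length $r$. Because $(\xi,\eta)\in\ndpt\zero$, the convex set $(\xi\eta)$ is a single geodesic line, so the two parametrized geodesics $u:=v(x;\xi,\eta)$ and $w:=v(\gamma y;\xi,\eta)$ differ only by a time shift: there exists a unique $\tau\in\RR$ with $w=g^\tau u$, and by definition of the projection maps $d(x,u(0))=d(x,(\xi\eta))$ and $d(\gamma y,u(\tau))=d(\gamma y,(\xi\eta))$. The hypothesis $(\xi,\eta)\in\mathcal{L}_r(x,\gamma y)$ gives $d(x,(\xi\eta))<r$ and $d(\gamma y,(\xi\eta))<r$; applying the triangle inequality to the four points $x$, $u(0)$, $u(\tau)$, $\gamma y$ then yields
\[
 |\tau-d(x,\gamma y)|<2r.
\]

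Next I would characterize exactly which $t$ contribute to the integrand. Since the line $u(\RR)$ has zero width and is the unique geodesic with endpoints $(\xi,\eta)$, the relation $g^{t+s}u\in K_r(\gamma y)$ forces $g^{t+s}u=g^{s'}w=g^{s'+\tau}u$ for some $s'\in(-r/2,r/2)$, which is equivalent to
\[
 t\in I_\tau:=(\tau-s-r/2,\tau-s+r/2),
\]
an open interval of length $r$. Consequently both integrals reduce to $\int_{I_\tau\cap J}\e^{\delta_\Gamma t}\d t$, with $J=[T_0,T+3r]$ in (a) and $J=[T_0,T-3r]$ in (b).

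For part (a), I would use $T_0+3r<d(x,\gamma y)\leq T$, $|\tau-d(x,\gamma y)|<2r$ and $|s|<r/2$ to verify $I_\tau\subset[T_0,T+3r]$: the left endpoint satisfies $\tau-s-r/2>(d(x,\gamma y)-2r)-r\geq T_0$ and the right endpoint satisfies $\tau-s+r/2<(d(x,\gamma y)+2r)+r\leq T+3r$. The required lower bound then follows from
\[
 \int_{I_\tau}\e^{\delta_\Gamma t}\d t\geq r\cdot\e^{\delta_\Gamma(\tau-s-r/2)}\geq r\cdot\e^{-3\delta_\Gamma r}\e^{\delta_\Gamma d(x,\gamma y)}.
\]
For the upper bound in (b), I would estimate $\int_{I_\tau}\e^{\delta_\Gamma t}\d t\leq r\cdot\e^{\delta_\Gamma(\tau-s+r/2)}\leq r\cdot\e^{3\delta_\Gamma r}\e^{\delta_\Gamma d(x,\gamma y)}$. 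The two vanishing statements reduce to showing $I_\tau\cap[T_0,T-3r]=\emptyset$: if $d(x,\gamma y)\leq T_0-3r$, then the right endpoint of $I_\tau$ is bounded by $\tau-s+r/2<d(x,\gamma y)+3r\leq T_0$, while if $d(x,\gamma y)>T$, the left endpoint satisfies $\tau-s-r/2>d(x,\gamma y)-3r>T-3r$.

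The whole argument is really bookkeeping, and the main obstacle is not conceptual but notational: keeping the constants $2r$ (coming from the $\mathcal{L}_r$ hypothesis), $r/2$ (the thickness of $K_r$ along the flow), and $3r$ (the size of the buffer in $J$) consistently matched and checking that the open/closed interval conventions and the strict inequalities $|s|<r/2$ carry through to give exactly the asserted inclusions, bounds, and vanishings with the advertised constant $\e^{\pm 3\delta_\Gamma r}$. The hypothesis $T_0>6r$ ensures $T_0-3r>3r>0$ so that the statements in (b) are non-vacuous.
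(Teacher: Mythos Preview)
Your proof is correct and follows essentially the same line as the paper's: both reduce the integrand to the indicator of the length-$r$ interval $I_\tau=(\tau-s-r/2,\tau-s+r/2)$, obtain $|\tau-d(x,\gamma y)|<2r$ from the triangle inequality via the $\mathcal{L}_r$ hypothesis, and then carry out the same interval-containment bookkeeping to get the bounds and vanishing statements. Your write-up is in fact slightly more explicit about why the zero-width assumption forces $v(\gamma y;\xi,\eta)$ to be a time-shift of $v(x;\xi,\eta)$.
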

\begin{proof}
Denote $v= v(x;\xi,\eta)\in\zero$ and let $\tau>0$ \st $g^{\tau}v=v(\gamma y;\xi,\eta)$. Since $(\xi,\eta)\in {\mathcal L}_r(x,\gamma y)$, 
the triangle inequality yields
\[ |  d(x,\gamma y)-\tau|< 2r.\]
By definition of $K_r(\gamma y)$ we have $g^{t+s}v \in K_r(\gamma y)$ if and only if 
$|t+s-\tau|<r/2$. Hence if $\,\tau -s -r/2\ge T_0\, $ and $\,\tau-s+r/2\le T+ 3r$, then
\begin{align*}
 \int_{T_0}^{T+3r}  \e^{\delta_\Gamma t} \mathbbm{1}_{K_r(\gamma y)}\bigl(g^{t+s} v(x;\xi,\eta)\bigr)\d t &=\int_{\tau -s -r/2}^{\tau-s+r/2}  \e^{\delta_\Gamma t}\d t\\
 &\hspace*{-2cm}
 \ge  r\cdot \e^{\delta_\Gamma (\tau-s-r/2)}\ge r\cdot e^{-3\delta_\Gamma r}\e^{\delta_\Gamma d(x,\gamma y)} .
 \end{align*}
Now $d(x,\gamma y)\in (T_0+3r, T]$ and $s\in (-r/2,r/2)$ imply
\[ \,\tau -s -r/2\ge d(x,\gamma y)-2r  -r/2- r/2\ge T_0\quad\text{and} \]
\[ \tau -s +r/2\le d(x,\gamma y)+2r+r/2+r/2\le T+3r,\]
so (a) holds.

In order to prove (b) we first notice that 
\begin{align*}
 \int_{T_0}^{T- 3r}  \e^{\delta_\Gamma t} \mathbbm{1}_{K_r(\gamma y)}\bigl(g^{t+s} v(x;\xi,\eta)\bigr)\d t &\le \int_{\tau -s -r/2}^{\tau-s+r/2}  \e^{\delta_\Gamma t}\d t\\
& \hspace*{-2cm} \le  r\cdot \e^{\delta_\Gamma (\tau-s+r/2)} \le r\cdot e^{3\delta_\Gamma r}\e^{\delta_\Gamma d(x,\gamma y)};
 \end{align*}
this proves the first assertion in (b).

Now if  $\, d(x,\gamma y)\le T_0-3r$, then 
\[ \,\tau-s+r/2\le d(x,\gamma y)+2r+r\le T_0,\]
and if $\, d(x,\gamma y)\ge T$, then 
\[ \,\tau-s-r/2\ge d(x,\gamma y)-2r-r\ge T-3r,\]
hence the integral in (b) equals zero in both cases.
\end{proof}

Moreover,  from Lemma~\ref{K+-corridor} we 
immediately get the following
\begin{corollary}\label{K+-measure} For all $\gamma\in\is(\XX)$ with $d(x,\gamma y)>3r$ 
and all $\,t>0 $ we have 
\begin{align*}
 m \bigl(  K^+\cap g^{-t}\gamma  K^-\bigr)&=\int_{{\mathcal L}_{r}(x,\gamma y)\cap (\gamma B\times A )}\d\mu_x(\xi)\d\mu_x(\eta)\e^{2\delta_\Gamma \Gr_x(\xi,\eta)} \\
 &\hspace*{3cm} \cdot \int_{-r/2}^{r/2} {\mathbbm 1}_{K_r(\gamma y)} \bigl(g^{t+s} v(x;\xi,\eta)\bigr)\d s. \end{align*}
\end{corollary}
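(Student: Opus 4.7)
The plan is to apply the measure formula~(\ref{measureformula}) to $E:=K^+\cap g^{-t}\gamma K^-$ and to compute, for each endpoint pair $(\xi,\eta)\in\ndpt\zero$, the one-dimensional Lebesgue length of $p(E)\cap(\xi\eta)$. Because $\overline\mu$ is carried on $\ndpt\zero$ (Theorem~\ref{currentisproduct}) and a zero-width pair of endpoints determines the geodesic line up to reparametrization, for each such $(\xi,\eta)$ the map $s\mapsto v(x;\xi,\eta)(s)$ is an isometric (arclength) parametrization of $(\xi\eta)$, which reduces the length computation to measuring a subset of $\RR$.

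First I would translate the constraint $v=g^s v(x;\xi,\eta)\in E$ into conditions on $(\xi,\eta,s)$ using the definitions in~(\ref{Kpm}). Membership in $K^+=K^+_r(x,A)$ forces $\eta\in A$, $d(x,(\xi\eta))<r$, and $s\in(-r/2,r/2)$. Since $\gamma$ commutes with the flow, $g^tv\in\gamma K^-=K^-_r(\gamma y,\gamma B)$ forces $\xi\in\gamma B$, $d(\gamma y,(\xi\eta))<r$, and $g^{t+s}v(x;\xi,\eta)\in K_r(\gamma y)$. The first four of these constraints, taken jointly, are equivalent to $(\xi,\eta)\in{\mathcal L}_r(x,\gamma y)\cap(\gamma B\times A)$, matching the description of $\ndpt E$ in Lemma~\ref{K+-corridor}; the last one becomes the inner indicator appearing in the claim.

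Next, for each such $(\xi,\eta)$ I would read off the Lebesgue length of $p(E)\cap(\xi\eta)$ as
\[
\int_{-r/2}^{r/2}\mathbbm{1}_{K_r(\gamma y)}\bigl(g^{t+s}v(x;\xi,\eta)\bigr)\d s,
\]
vanishing for $(\xi,\eta)$ outside ${\mathcal L}_r(x,\gamma y)\cap(\gamma B\times A)$. Substituting this into~(\ref{measureformula}) and writing $\d\overline\mu(\xi,\eta)=\e^{2\delta_\Gamma\Gr_x(\xi,\eta)}\d\mu_x(\xi)\d\mu_x(\eta)$ via~(\ref{overlinemudef}) produces the claimed identity.

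I do not anticipate a serious obstacle; the argument is essentially a Fubini-style disintegration of $m$ along the flow. The only delicate point is confirming that the flow-shift $s$ coincides with an arclength parameter on $(\xi\eta)$ and that the parametrization is surjective onto the geodesic line supporting $(\xi\eta)$—both immediate from the definition of $v(x;\xi,\eta)$ together with the zero-width assumption on $(\xi,\eta)$. The hypothesis $d(x,\gamma y)>3r$ is not needed to establish the algebraic identity; it is inherited from Lemma~\ref{K+-corridor} to ensure that the corridor description of $\ndpt E$ is non-degenerate and compatible with the restriction $t>0$.
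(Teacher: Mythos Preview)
Your proposal is correct and follows the paper's implicit argument: the paper simply states that the formula is immediate from Lemma~\ref{K+-corridor} together with the measure formula~(\ref{measureformula}), and your disintegration along the flow is exactly how one makes this explicit. One minor wording issue: your ``first four constraints'' are not literally equivalent to $(\xi,\eta)\in{\mathcal L}_r(x,\gamma y)\cap(\gamma B\times A)$, since the corridor also carries an order condition ($\tau>0$ in the notation of Lemma~\ref{K+-corridor}); however, under the hypothesis $d(x,\gamma y)>3r$ one has $|\tau|>r$, and the inner indicator forces $\tau>-r$ for $t>0$, so the integrand vanishes on the difference and the identity is unaffected.
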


\section{Equidistribution}\label{equidistribution}

We keep the notation and the setting from the previous section and 
will now address the question of equidistribution of $\Gamma$-orbit points in $\XX$. In order to get a reasonable statement we will have to require that the Ricks' Bowen-Margulis measure $m_\Gamma$ is {\hl finite}:
\begin{theorem}\label{equithm}
Let $\Gamma<\is(\XX)$ be a discrete rank one group with  non-arithmetic length spectrum, 
$\zero_\Gamma\ne\emptyset$ 
and finite Ricks' Bowen-Margulis measure $m_\Gamma$.

Let $f$ be a continuous function from $\ganz\times \ganz$ to $\RR$, and  $x$, $y\in\XX$. Then  
\[ \lim_{T\to\infty}    
\delta_\Gamma  \e^{-\delta_\Gamma T} \sum_{\begin{smallmatrix}{\scriptstyle\gamma\in\Gamma}\\{\scriptstyle d(x,\gamma y )\le T}\end{smallmatrix}} f(\gamma y,\gamma^{-1} x)=\frac1{\Vert m_\Gamma \Vert} \int_{\rand\times\rand} f(\xi,\eta)\d \mu_x(\xi) \d \mu_y(\eta).\]
\end{theorem}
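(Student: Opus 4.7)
The plan is to follow Roblin's approach for CAT$(-1)$-spaces (\cite[Chapitre~4]{MR2057305}), adapted here using the geometric machinery of Sections~\ref{shadowconecorridor}--\ref{RicksBMestimates}. By a standard Stone--Weierstrass and linearity argument, combined with Lemma~\ref{boundaryhasmeasurezero}, it suffices to prove the equidistribution formula for $f = \mathbbm{1}_{\widehat A \times \widehat B}$, where $\widehat A, \widehat B \subset \ganz$ are open sets with $\mu_x(\partial \widehat A\cap\rand)=\mu_y(\partial\widehat B\cap\rand)=0$, and further to the case where $\widehat A\cap\rand$ (resp.\ $\widehat B\cap\rand$) is a small open neighborhood of some $\bar\xi\in\radlim$ (resp.\ $\bar\eta\in\radlim$). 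Lemma~\ref{orbitpointsincones} then allows us to replace the constraints $\gamma y\in\widehat A$, $\gamma^{-1}x\in\widehat B$ by cone conditions $\gamma y\in{\mathcal C}_r^{\pm}(x,A)$, $\gamma^{-1}x\in{\mathcal C}_r^{\pm}(y,B)$ with $A, B$ small open subsets of $\rand$ near $\bar\xi, \bar\eta$, up to finitely many exceptional terms.

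Fix $r>0$ and set $\tilde K^+:=K_r^+(x,A)$, $\tilde K^-:=K_r^-(y,B)$ as in (\ref{Kpm}). Corollary~\ref{mixcor} yields
\[
\sum_{\gamma\in\Gamma} m\bigl(\tilde K^+\cap g^{-t}\gamma\tilde K^-\bigr)\xrightarrow[t\to\infty]{}\frac{m(\tilde K^+)\, m(\tilde K^-)}{\|m_\Gamma\|}.
\]
Multiplying by $\delta_\Gamma e^{\delta_\Gamma t}$, integrating over $[T_0,T]$ and dividing by $e^{\delta_\Gamma T}$, the right-hand side tends to $m(\tilde K^+)m(\tilde K^-)/\|m_\Gamma\|$ as $T\to\infty$. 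On the left, swap sum and integral (monotone convergence) and apply Corollary~\ref{K+-measure}: each $\gamma\in\Gamma$ contributes a term whose inner $t$-integral, by Lemma~\ref{flowintegrals}, equals $r\,e^{\delta_\Gamma d(x,\gamma y)}$ up to a multiplicative factor in $[e^{-3\delta_\Gamma r},e^{3\delta_\Gamma r}]$ when $d(x,\gamma y)\in(T_0+3r, T-3r]$, and vanishes for $d(x,\gamma y)\le T_0-3r$ or $>T$. Together with the $s$-integration of length $r$, each such $\gamma$ thus contributes approximately $r^2\,e^{\delta_\Gamma d(x,\gamma y)}\cdot\overline\mu\bigl({\mathcal L}_r(x,\gamma y)\cap(\gamma B\times A)\bigr)$.

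The crux is identifying this per-$\gamma$ contribution. For $(\zeta,\xi)\in{\mathcal L}_r(x,\gamma y)$ there is a connecting geodesic passing within $r$ of both $x$ and $\gamma y$; the cocycle identity then forces $\bs_\zeta(x,\gamma y)+\bs_\xi(x,\gamma y)\in[-2r,2r]$, and the Gromov products $\Gr_x(\zeta,\xi),\Gr_{\gamma y}(\zeta,\xi)$ lie in $[0,r]$. Writing $\overline\mu$ via $\mu_x\otimes\mu_x$, using $\gamma$-equivariance $\mu_x(\gamma E)=\mu_{\gamma^{-1}x}(E)$ together with conformality converts the $\zeta$-integration over $\gamma B\cap{\mathcal O}^-_r(\gamma y,x)$ into a $\mu_y$-integration over $B\cap{\mathcal O}^-_r(y,\gamma^{-1}x)$ carrying a Jacobian $e^{-\delta_\Gamma d(x,\gamma y)+O(r)}$; this factor cancels the $e^{\delta_\Gamma d(x,\gamma y)}$ from Lemma~\ref{flowintegrals}, so the per-$\gamma$ contribution becomes approximately $r^2\cdot\mu_y\bigl(B\cap{\mathcal O}^-_r(y,\gamma^{-1}x)\bigr)\cdot\mu_x\bigl(A\cap{\mathcal O}^-_r(x,\gamma y)\bigr)\cdot e^{O(\delta_\Gamma r)}$. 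As $\gamma y\to\bar\xi$ and $\gamma^{-1}x\to\bar\eta$ (enforced by Lemma~\ref{orbitpointsincones}), Corollary~\ref{measureofshadowsisclose} shows these shadow measures converge to $\mu_x({\mathcal O}_r(\bar\xi,x))$ and $\mu_y({\mathcal O}_r(\bar\eta,y))$ respectively. Combining this with the parallel expansion $m(\tilde K^\pm)\approx r\,\mu_{x}(A)\,\mu_x({\mathcal O}_r(\bar\xi,x))\cdot e^{O(\delta_\Gamma r)}$ (resp.\ for $\tilde K^-$) coming from (\ref{measureK+}), the shadow factors cancel to yield
\[
\delta_\Gamma e^{-\delta_\Gamma T}\,\#\bigl\{\gamma\in\Gamma : d(x,\gamma y)\le T,\ \gamma y\in\widehat A,\ \gamma^{-1}x\in\widehat B\bigr\}\xrightarrow[T\to\infty]{}\frac{\mu_x(\widehat A\cap\rand)\,\mu_y(\widehat B\cap\rand)}{\|m_\Gamma\|}
\]
in the limit $r\to 0$ followed by shrinking $\widehat A,\widehat B$ to $\{\bar\xi\},\{\bar\eta\}$, which is precisely Theorem~B for $f=\mathbbm{1}_{\widehat A\times\widehat B}$.

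The main obstacle is controlling two sources of error uniformly over $\gamma$: the $O(\delta_\Gamma r)$ discrepancy coming from the Busemann cocycle and Gromov-product bounds, and the replacement of the corridor ${\mathcal L}_r(x,\gamma y)$ by the product of shadows ${\mathcal O}^-_r(\gamma y,x)\times{\mathcal O}^-_r(x,\gamma y)$ (which by Remark~\ref{Pittetcounterex} fails to be an equality even in $\mathbb{H}^2$, so one must use two-sided bounds via Lemmas~\ref{largecones} and~\ref{smallcones} for large and small cones/shadows). One then carefully orders the limits: $T\to\infty$ first to apply mixing, then $r\to 0$ to eliminate the multiplicative $e^{O(\delta_\Gamma r)}$ errors, then $\widehat A,\widehat B$ shrinking to points. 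Edge effects at $d(x,\gamma y)\approx T_0$ or $T$ contribute only $O(1)$ many terms on exponential scale and are controlled by Lemma~\ref{flowintegrals}(b); finiteness of $\|m_\Gamma\|$ is essential throughout so that mixing yields a non-degenerate limit.
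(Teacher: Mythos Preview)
Your approach is essentially correct and follows the same strategy as the paper (Roblin's method adapted via the tools of Sections~\ref{shadowconecorridor}--\ref{RicksBMestimates}). Two places where the paper is more careful than your sketch: first, the argument is split into two stages---Proposition~\ref{firststep} treats only basepoints $x,y$ with trivial stabilizer lying on zero-width $\Gamma$-recurrent geodesics through $\xi_0,\eta_0$ (this is precisely what guarantees $\mu_x({\mathcal O}_r(\xi_0,x))>0$, so that the cancellation of shadow measures you describe is non-degenerate and Corollary~\ref{measureofshadowsisclose} applies), and Proposition~\ref{secondstep} then transfers the estimate to arbitrary $x,y$ by a change-of-basepoint argument exploiting conformality~(\ref{conformality}) and continuity of Busemann functions; your sketch works directly with general $x,y$ and $\bar\xi\in\radlim$, but this alone does not ensure the shadow from $\bar\xi$ through $x$ has positive measure. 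Second, the final globalization is carried out via a finite partition of unity on $\ganz\times\ganz$ subordinate to a cover by small products $\widehat V\times\widehat W$, rather than by ``shrinking $\widehat A,\widehat B$ to points'' (which would lose the integral on the right-hand side). Otherwise your identification of the key mechanism---mixing, Lemma~\ref{flowintegrals}, the corridor-to-cone comparisons of Lemmas~\ref{largecones}--\ref{smallcones}, and the conformality cancellation---matches the paper exactly.
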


Our proof will closely follow Roblin's strategy for his Th{\'e}or{\`e}me~4.1.1 in \cite{MR2057305}:
Using mixing of the geodesic flow one proves that for all sufficiently small Borel sets $A,B\subset \rand$ the limit inferior and the limit superior of the measures 
\begin{equation}\label{measuredef}
\nu_{x,y}^T:=
\delta_\Gamma\cdot  \e^{-\delta_\Gamma T}\sum_{\begin{smallmatrix}{\scriptstyle\gamma\in\Gamma}\\{\scriptstyle d(x,\gamma y )\le T}\end{smallmatrix}} \mathcal{D}_{\gamma y}\otimes  \mathcal{D}_{\gamma^{-1} x}\end{equation}
as $T$ tends to infinity  evaluated on products of ``cones" of opening $A$, $B$  is approximately $\mu_x(A)\cdot \mu_y(B)/\Vert m_\Gamma\Vert$.


In the first step we only deal with sufficiently small open neighborhoods 
of pairs of boundary points which are in a ``nice" position with respect to $x$ and $y$; then one shows that the estimates hold for all pairs of sufficiently small Borel sets $A$ and $B$.  The final step consists in the full proof by globalisation with respect to $A$ and $B$.

The following Proposition provides the first step in the proof of Theorem~\ref{equithm}:
\begin{proposition}\label{firststep}
Let $\varepsilon>0$, $(\xi_0,\eta_0)\in\rand\times\rand$ and $x$, $y\in\XX$ with trivial stabilizer in $\Gamma$ and \st $x\in (\xi_0 v^+)$, $y\in (\eta_0 w^+)$ for some 
$\Gamma$-recurrent elements $v$, $w\in\zero$. 
Then there exist open neighborhoods $V$, $W\subset\rand$ of $\xi_0$,  $\eta_0$ \st for all Borel sets $A\subset V$, $B\subset W$
\begin{align*}
\limsup_{T\to\infty} \nu_{x,y}^T\bigl({\mathcal C}^-_{1}(x,A)\times {\mathcal C}^-_{1}(y,B)\bigr) &\le \e^\varepsilon \mu_x(A)\mu_y (B)/\Vert m_\Gamma\Vert,\\
\liminf_{T\to\infty} \nu_{x,y}^T\bigl({\mathcal C}^+_{1}(x,A)\times {\mathcal C}^+_{1}(y,B)\bigr) &\ge \e^{-\varepsilon} \mu_x(A)\mu_y (B)/\Vert m_\Gamma\Vert.
\end{align*}
\end{proposition}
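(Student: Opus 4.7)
The plan is to follow Roblin's strategy for his Théorème~4.1.1 in \cite{MR2057305}, driven by the mixing Corollary~\ref{mixcor} applied to the sets $K^+=K_r^+(x,A)$ and $K^-=K_r^-(y,B)$ defined in~(\ref{Kpm}), where $r\in(0,1)$ is a small auxiliary parameter to be shrunk at the end to absorb multiplicative errors into~$\e^{\pm\varepsilon}$. Corollary~\ref{mixcor} gives
\[
 \lim_{t\to\infty}\sum_{\gamma\in\Gamma} m\bigl(K^+\cap g^{-t}\gamma K^-\bigr)=\frac{m(K^+)\,m(K^-)}{\Vert m_\Gamma\Vert}.
\]
Multiplying by $\delta_\Gamma\,\e^{\delta_\Gamma t}\,\d t$ and integrating $t$ over $[T_0,T\pm 3r]$ makes the right-hand side equivalent to $\e^{\delta_\Gamma(T\pm 3r)}\cdot m(K^+)m(K^-)/\Vert m_\Gamma\Vert$ as $T\to\infty$. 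Expanding the left-hand side via Corollary~\ref{K+-measure}, swapping orders of integration and invoking Lemma~\ref{flowintegrals} sandwich the integrated quantity between orbit sums of the form
\[
 r^2\,\e^{\mp 3\delta_\Gamma r}\,\delta_\Gamma\sum_{\gamma:\,d(x,\gamma y)\in I_T^{\pm}}\e^{\delta_\Gamma d(x,\gamma y)} \int_{{\mathcal L}_r(x,\gamma y)\cap(\gamma B\times A)}\e^{2\delta_\Gamma\Gr_x(\xi,\eta)}\,\d\mu_x(\xi)\,\d\mu_x(\eta),
\]
with $I_T^-=(T_0+3r,T]$ and $I_T^+=[T_0-3r,T]$.

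The next step is to translate corridor contributions into orbit counts inside cones. By Lemma~\ref{largecones}, every $\gamma$ with nontrivial corridor contribution satisfies $(\gamma y,\gamma^{-1}x)\in{\mathcal C}_r^+(x,A)\times{\mathcal C}_r^+(y,B)\subset{\mathcal C}_1^+(x,A)\times{\mathcal C}_1^+(y,B)$ by~(\ref{coneesti}), so that the corridor orbit sum is dominated by the count of $\gamma$ contributing to $\nu_{x,y}^T({\mathcal C}_1^+\times{\mathcal C}_1^+)$. Conversely, Lemma~\ref{smallcones} shows that every $\gamma$ with $(\gamma y,\gamma^{-1}x)\in{\mathcal C}_1^-(x,A)\times{\mathcal C}_1^-(y,B)\subset{\mathcal C}_r^-(x,A)\times{\mathcal C}_r^-(y,B)$ produces a corridor contribution that contains the set $\{(\zeta,\xi):\xi\in{\mathcal O}_r^-(x,\gamma y),\,\zeta\in{\mathcal O}_r(\xi,x)\}$. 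Combined with the trivial bound $|\Gr_x(\xi,\eta)|\le r$ for corridor pairs (take $z=v(0)$ in~(\ref{GromovProd})) and the estimates~(\ref{measureK+}) for $m(K^{\pm})$, this sandwiches $\nu_{x,y}^T({\mathcal C}_1^{\mp}\times{\mathcal C}_1^{\mp})$ between expressions of the form $\e^{\pm C r}\cdot\mu_x(A)\mu_y(B)/\Vert m_\Gamma\Vert$, $C=O(1)$, modulated by a ratio of shadow masses.

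The heart of the argument is to show that this shadow-mass ratio is within $\e^{\pm\varepsilon'}$ of~$1$ for arbitrarily small $\varepsilon'>0$, uniformly in $\gamma$ after discarding finitely many orbit points. Using the conformal identity $\d\mu_x(\eta)=\e^{\delta_\Gamma\bs_\eta(\gamma y,x)}\d\mu_{\gamma y}(\eta)$ together with $\gamma_*\mu_y=\mu_{\gamma y}$ and the fact that $\bs_\eta(\gamma y,x)=-d(x,\gamma y)+O(r)$ for $\eta\in{\mathcal O}_r^-(x,\gamma y)$, the inner-shadow lower bound from Lemma~\ref{smallcones} becomes
\[
 \e^{-\delta_\Gamma d(x,\gamma y)}\cdot\e^{O(r)}\cdot\mu_y\bigl({\mathcal O}_r^-(\gamma^{-1}x,y)\bigr)\cdot\int_{A\cap{\mathcal O}_r^-(x,\gamma y)}\mu_x\bigl({\mathcal O}_r(\xi,x)\bigr)\d\mu_x(\xi),
\]
and this is to be matched against the integrals $\int_A\mu_x({\mathcal O}_r(\xi,x))\d\mu_x(\xi)$ and $\int_B\mu_y({\mathcal O}_r(\eta,y))\d\mu_y(\eta)$ appearing in~(\ref{measureK+}). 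The hypothesis $x\in(\xi_0 v^+)$ and $y\in(\eta_0 w^+)$ for $\Gamma$-recurrent $v,w\in\zero$ places $\xi_0,\eta_0\in\rand^{\small{\mathrm{rec}}}$, so Corollary~\ref{measureofshadowsisclose} applies: choosing $r$ via Lemma~\ref{boundaryhasmeasurezero} so that $\widetilde\partial{\mathcal O}_r(\xi_0,x)$ and $\widetilde\partial{\mathcal O}_r(\eta_0,y)$ are $\mu_x$- respectively $\mu_y$-null, and then shrinking $V$, $W$, forces $\mu_x({\mathcal O}_r(\xi,x))\in \e^{\pm\varepsilon'}\mu_x({\mathcal O}_r(\xi_0,x))$ for $\xi\in V$ and $\mu_y({\mathcal O}_r^-(\gamma^{-1}x,y))\in\e^{\pm\varepsilon'}\mu_y({\mathcal O}_r(\eta_0,y))$ whenever $\gamma^{-1}x$ lies in a small enough neighbourhood of $\eta_0$. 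Lemma~\ref{orbitpointsincones}(a) allows us to exclude the finitely many orbit points violating these clustering conditions. The main obstacle is coordinating the three small parameters $r$, $\varepsilon'$ and the size of $V$, $W$ so that the cumulative errors $\e^{O(r)}$ (from Gromov product and flow-time estimates) and $\e^{\pm O(\varepsilon')}$ (from the shadow approximations) combine into at most $\e^{\pm\varepsilon}$ after taking the $\limsup$ / $\liminf$ in~$T$.
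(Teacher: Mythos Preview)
Your outline is correct and follows the paper's proof essentially step for step: fix a small $r$, apply Corollary~\ref{mixcor} to $K^\pm$, integrate against $\e^{\delta_\Gamma t}\d t$, unfold via Corollary~\ref{K+-measure} and Lemma~\ref{flowintegrals}, then use Lemmas~\ref{largecones}/\ref{smallcones}, conformality, and Lemma~\ref{orbitpointsincones}(a) together with the shadow-mass stability of Corollary~\ref{measureofshadowsisclose}.

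Two small corrections. First, the hypothesis $x\in(\xi_0 v^+)$ with $v\in\zero$ $\Gamma$-recurrent does \emph{not} give $\xi_0\in\rand^{\small{\mathrm{rec}}}$; it only gives $v^+\in\rand^{\small{\mathrm{rec}}}$. This is harmless because Corollary~\ref{measureofshadowsisclose} never requires $\xi_0\in\rand^{\small{\mathrm{rec}}}$: the role of the hypothesis is that $v^+\in\Lim\cap{\mathcal O}_r(\xi_0,x)$, so by Lemma~\ref{joinrankone} the shadow contains an open neighbourhood of a limit point and hence $\mu_x({\mathcal O}_r(\xi_0,x))>0$, which is the actual input to Corollary~\ref{measureofshadowsisclose}. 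Second, $r$ cannot literally be ``shrunk at the end'': the neighbourhoods $V,W$ depend on $r$ (through Corollary~\ref{measureofshadowsisclose} applied to ${\mathcal O}_r(\xi_0,x)$), so $r$ must be fixed first, small in terms of $\varepsilon$ (the paper takes $r<\varepsilon/(30\delta_\Gamma)$) and also small enough that $K^\pm$ inject into $\quotient{\Gamma}{\SX}$ (the paper takes $r<\rho/3$ where $\rho$ is the minimal displacement). Your final paragraph shows you understand the dependency structure; just make the order explicit.
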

\begin{proof}
Set $\rho:=\min\{ d(x,\gamma x),\ d(y,\gamma y) 
\colon \gamma\in \Gamma\setminus\{e\}\}$. 

Let $\varepsilon>0$ arbitrary. We first fix $r\in (0,\min\{1,\rho/3, \varepsilon/(30 \delta_\Gamma)\})$ \st
\[ \mu_x\bigl(\widetilde\partial\mathcal{O}_r(\xi_0, x)\bigr)=0=\mu_y\bigl(\widetilde\partial\mathcal{O}_r(\eta_0, y)\bigr).\]
Since $v^+\in \Lim\cap \mathcal{O}_r(\xi_0, x)$ and $w^+\in\Lim\cap \mathcal{O}_r(\eta_0, y)$, both shadows  $\mathcal{O}_r(\xi_0, x)$ and $\mathcal{O}_r(\eta_0, y)$ contain an open neighborhood of a limit point of $\Gamma$ by Lemma~\ref{joinrankone}. So from $\supp(\mu_x)=\supp(\mu_y)=\Lim$ and the definition~(\ref{supportdef}) of the support of a measure we have 
\[ \mu_x\bigl(\mathcal{O}_r(\xi_0, x)\bigr)\cdot\mu_y\bigl(\mathcal{O}_r(\eta_0, y)\bigr)>0.\]
Moreover, according to Lemma~\ref{joinrankone} and Corollary~\ref{measureofshadowsisclose} 
there exist open neighborhoods $\widehat V$, $\widehat W\subset\ganz$ of $\xi_0$, $\eta_0$ \st if $(a, b)\in \widehat V\times \widehat W$, then $a$ can be joined to $v^+$,  $b$ can be joined to $w^+$ by a rank one geodesic, 
and
\begin{align}\label{approxmeasures}
\e^{-\varepsilon/30}  \mu_x\bigl(\mathcal{O}_r(\xi_0, x) \bigr)&\le \mu_x\bigl(\mathcal{O}^\pm_{r}(a, x) \bigr)\le\e^{\varepsilon/30} \mu_x\bigl(\mathcal{O}_r(\xi_0, x) \bigr),\nonumber \\
\e^{-\varepsilon/30}\mu_y\bigl(\mathcal{O}_r(\eta_0, y) \bigr) & \le  \mu_y\bigl(\mathcal{O}^\pm_{r}(b, y) \bigr) \le\e^{\varepsilon/30}\mu_y\bigl(\mathcal{O}_r(\eta_0, y) \bigr).
\end{align}

Let $V$, $W\subset\rand$ be open neighborhoods of $\xi_0$, $\eta_0$ \st $\overline{V}  \subset \widehat V\cap\rand$ and $\overline{W}\subset \widehat W\cap\rand$.  Let $A\subset V$, $B\subset W$ be arbitrary Borel sets.  

Roblin's method consists in giving  upper and lower bounds for the asymptotics  of  the integrals
\[ \int_{T_0}^{T\pm 3r} \e^{\delta_\Gamma t} \sum_{\gamma\in\Gamma} m \bigl(  K^+\cap g^{-t}\gamma  K^-\bigr) \d t\]
as $T$ tends to infinity:
On the one hand one uses mixing to relate the integrals to $\mu_x(A)\cdot \mu_y(B)$; on the other hand one computes direct estimates for the integrals to get a relation to the measures $\nu_{x,y}^T\bigl({\mathcal C}^\pm_{1}(x,A)\times {\mathcal C}^\pm_{1}(y,B)\bigr)$.

Let us start by exploiting the mixing property. Notice that by choice of $r<\rho/3$ and 
the definition of $\rho$ we have 
\[ K_r(x) \cap \gamma K_r(x) =\emptyset\ \text{ and}\quad K_r(y) \cap \gamma K_r(y)=\emptyset\ \text{ for all}\quad \gamma\in\Gamma\setminus\{e\},\]
hence the projection map $\SX\to \quotient{\Gamma}{\SX}$ restricted to  $K^\pm$ is injective. 
So we can apply  Corollary~\ref{mixcor} 
to get
\[ \lim_{t\to \infty} \sum_{\gamma\in\Gamma} m(K^+\cap g^{-t}\gamma K^-)= 
\frac{m(K^+)\cdot m(K^-)}{\Vert m_\Gamma\Vert }.\]
Hence there exists $T_0> 6r$ \st for $t\ge T_0 $ we have
\begin{eqnarray}\label{summixing}
 \e^{-\varepsilon/3} m(K^+)\cdot m(K^-) &\le & \Vert m_\Gamma\Vert\cdot \sum_{\gamma\in\Gamma} m(K^+\cap g^{-t} \gamma K^-)\nonumber\\
 &\le & \e^{\varepsilon/3} m(K^+)\cdot m(K^-).\end{eqnarray}
Combining (\ref{measureK+}) and the estimates (\ref{approxmeasures}) we obtain from $A\subset \widehat V$ and $B\subset \widehat W$
\begin{align*} r \e^{-\varepsilon/30} \mu_x\bigl({\mathcal O}_r(\xi_0,x)\bigr)\mu_x(A) \le m(K^+)&\le r\e^{2\delta_\Gamma r}\e^{\varepsilon/30} \mu_x\bigl({\mathcal O}_r(\xi_0,x)\bigr)\mu_x(A),\\
r \e^{-\varepsilon/30} \mu_y\bigl({\mathcal O}_r(\eta_0,y)\bigr)\mu_y(B) \le m(K^-)&\le r\e^{2\delta_\Gamma r}\e^{\varepsilon/30} \mu_y\bigl({\mathcal O}_r(\eta_0,y)\bigr)\mu_y(B);
\end{align*} 
using the abbreviation $M=r^2 \mu_x\bigl({\mathcal O}_r(\xi_0,x)\bigr) \mu_y\bigl({\mathcal O}_r(\eta_0,y)\bigr)>0$ and $\delta_\Gamma r\le \varepsilon/30$  we get
\begin{equation}\label{measureproductKpKm}
\e^{-\varepsilon/15}M\mu_x(A)\mu_y(B)\le m(K^+)m(K^-)\le \e^{\varepsilon/5}M\mu_x(A)\mu_y(B).
\end{equation}
Hence according to (\ref{summixing}) we have for $t\ge T_0$ 
\begin{align*} 
M \mu_x(A)\mu_y(B)&\le \e^{\varepsilon/15}\e^{\varepsilon/3} \Vert m_\Gamma\Vert\cdot \sum_{\gamma\in\Gamma} m(K^+\cap g^{-t}\gamma K^-),\\
M \mu_x(A)\mu_y(B)&\ge \e^{-\varepsilon/5}\e^{-\varepsilon/3} \Vert m_\Gamma\Vert\cdot \sum_{\gamma\in\Gamma} m(K^+\cap g^{-t}\gamma K^-).
\end{align*}
We now integrate the inequalities to get
\begin{align} 
\nonumber \bigl(\e^{\delta_\Gamma (T-3r)}-\e^{\delta_\Gamma T_0}\bigr) &M \mu_x(A)\mu_y(B) = \delta_\Gamma \int_{T_0}^{T-3r} \e^{\delta_\Gamma t}  M \mu_x(A)\mu_y(B)\d t\\
\label{Tminus3r} &\le e^{2\varepsilon/5} \Vert m_\Gamma\Vert\cdot \delta_\Gamma \int_{T_0}^{T-3r} \e^{\delta_\Gamma t}\sum_{\gamma\in\Gamma} m(K^+\cap g^{-t}\gamma K^-),\\
\nonumber \bigl(\e^{\delta_\Gamma (T+3r)}-\e^{\delta_\Gamma T_0}\bigr) & M \mu_x(A)\mu_y(B) = \delta_\Gamma \int_{T_0}^{T+3r} \e^{\delta_\Gamma t}  M \mu_x(A)\mu_y(B)\d t \\
\label{Tplus3r} &\ge e^{-8\varepsilon/15} \Vert m_\Gamma\Vert\cdot \delta_\Gamma \int_{T_0}^{T+3r} \e^{\delta_\Gamma t}\sum_{\gamma\in\Gamma} m(K^+\cap g^{-t}\gamma K^-).\end{align}

We will next give upper and lower bounds for the integrals on the right-hand side:
For the upper bound we first remark that   $(\xi,\eta)\in  {\mathcal L}_{r}(x,\gamma y)\cap\ndpt\zero$ implies 
 $\Gr_x(\xi,\eta)<r$. Moreover,   our choice of $T_0 > 6r$ guarantees that 
 $K^+ \cap g^{-t} \gamma K^-\ne\emptyset$ for some $t \geq T_0$ implies  $d(x, \gamma y) > 3r$.  Applying Corollary \ref{K+-measure} we therefore get
\begin{align*}   
& \hspace*{-0.7cm} \int_{T_0}^{T- 3r} \e^{\delta_\Gamma t} \sum_{\gamma\in\Gamma} m \bigl(  K^+\cap g^{-t}\gamma  K^-\bigr) \d t\\
 &\le \sum_{\gamma\in\Gamma} \int_{{\mathcal L}_{r}(x,\gamma y)\cap (\gamma B\times A)}\d\mu_x(\xi)\d\mu_x(\eta)\e^{2\delta_\Gamma r}\\ 
&\hspace*{3cm} \cdot  \int_{-r/2}^{r/2} \bigl(\int_{T_0}^{T-3r} \mathbbm{1}_{K_r(\gamma y)} \bigl(g^{t+s} v(x;\xi,\eta)\bigr) \e^{\delta_\Gamma t} \d t\bigr) \d s \\
&\le \e^{2\delta_\Gamma r}\cdot r^2\cdot  \e^{3\delta_\Gamma r} \sum_{\begin{smallmatrix} {\scriptstyle \gamma\in\Gamma}\\ {\scriptstyle 
d(x,\gamma y)\le T}\end{smallmatrix}}\int_{{\mathcal L}_{r}(x,\gamma y)\cap (\gamma B\times A)}\d\mu_x(\xi)\d\mu_x(\eta)\cdot  \e^{\delta_\Gamma d(x,\gamma y)};
\end{align*}
here we used Lemma~\ref{flowintegrals} (b)  in the last step.
Lemma~\ref{largecones}, $r\le 1$ and the first estimate in (\ref{coneesti}) further imply
\begin{align*}   
& \hspace*{-0.1cm} \int_{T_0}^{T- 3r} \e^{\delta_\Gamma t} \sum_{\gamma\in\Gamma} m \bigl(  K^+\cap g^{-t}\gamma  K^-\bigr) \d t\\
&\le r^2 \e^{5\delta_\Gamma r}\hspace*{-4mm} \sum_{\begin{smallmatrix} {\scriptstyle \gamma\in\Gamma}\\ {\scriptstyle 
d(x,\gamma y)\le T}\\ {\scriptstyle (\gamma y,\gamma^{-1}x)\in  {\mathcal C}^+_{1}(x,A)\times {\mathcal C}^+_{1}(y,B)}\end{smallmatrix}} \hspace*{-4mm}  \int_{{\mathcal O}^+_{r}(\gamma y,x)} \d\mu_x(\xi) \int_{{\mathcal O}^+_{r}(x,\gamma y)}\d\mu_x(\eta) \e^{\delta_\Gamma d(x,\gamma y)}.
\end{align*}
Using the fact that for all $\eta\in{\mathcal O}^+_{r}(x,\gamma y)$ we have $\ \bs_\eta(x,
\gamma y)\ge d(x,\gamma y)-4r$,\break $\Gamma$-equivariance and conformality~(\ref{conformality}) of $\mu$ imply
\[ \int_{{\mathcal O}^+_{r}(x,\gamma y)}\d\mu_x(\eta) \e^{\delta_\Gamma d(x,\gamma y)}\le \e^{4\delta_\Gamma r}  \mu_y\bigl({\mathcal O}^+_{r}(\gamma^{-1}x, y)\bigr).\] 
Moreover, since by Lemma~\ref{orbitpointsincones} (a) there are only finitely many $\gamma\in\Gamma$ \st 
\[ (\gamma y,\gamma^{-1}x)\in \left( {\mathcal C}^+_{1}(x,A)\times {\mathcal C}^+_{1}(y,B)\right) \setminus (\widehat V\times \widehat W),\]
restricting the summation to $\gamma\in\Gamma$ with 
\[ (\gamma y,\gamma^{-1}x)\in \left( {\mathcal C}^+_{1}(x,A)\times {\mathcal C}^+_{1}(y,B)\right) \cap (\widehat V\times \widehat W)\]  only contributes a constant $C$ to the upper bound. 
So with our choice of $r\le 1$ and $r\le \varepsilon/(30 \delta_\Gamma)$  we conclude
\begin{align*}
& \hspace*{-7mm} \int_{T_0}^{T- 3r}  \e^{\delta_\Gamma t} \sum_{\gamma\in\Gamma} m \bigl(  K^+\cap g^{-t}\gamma  K^-\bigr) \d t\\
&\le  r^2 \e^{9\varepsilon/30} \hspace*{-14mm} \sum_{\begin{smallmatrix} {\scriptstyle \gamma\in\Gamma}\\ {\scriptstyle d(x,\gamma y)\le T}\\ {\scriptstyle (\gamma y,\gamma^{-1}x)\in ( {\mathcal C}^+_{1}(x,A)\times {\mathcal C}^+_{1}(y,B))\cap (\widehat V\times \widehat W)}\end{smallmatrix}} \hspace*{-14mm}  \mu_x\bigl({\mathcal O}^+_{r}(\gamma y,x)\bigr) \mu_y\bigl({\mathcal O}^+_{r}(\gamma^{-1}x, y)\bigr) +C\\
&\stackrel{(\ref{approxmeasures})}{\le} r^2  \e^{11\varepsilon/30}\hspace*{-14mm} \sum_{\begin{smallmatrix} {\scriptstyle \gamma\in\Gamma}\\ {\scriptstyle d(x,\gamma y)\le T}\\ {\scriptstyle (\gamma y,\gamma^{-1}x)\in ( {\mathcal C}^+_{1}(x,A)\times {\mathcal C}^+_{1}(y,B))\cap (\widehat V\times \widehat W)}\end{smallmatrix}} \hspace*{-14mm}  \mu_x\bigl({\mathcal O}_{r}(\xi_0,x)\bigr) \mu_y\bigl({\mathcal O}_{r}(\eta_0, y)\bigr) +C,\\
&\le \e^{11\varepsilon/30} M \frac{\e^{\delta_\Gamma T}}{\delta_\Gamma}\nu_{x,y}^T\bigl( {\mathcal C}^+_{1}(x,A)\times {\mathcal C}^+_{1}(y,B)\bigr)+ C.
\end{align*}
Plugging this in the inequality (\ref{Tminus3r}) divided by $M \e^{\delta_\Gamma (T-3r)}\cdot\Vert m_\Gamma\Vert$ we get (with a constant $C'$ independent of $T$)
\begin{align*}
\frac{1- \e^{\delta_\Gamma (-T+3r+T_0)}}{\Vert m_\Gamma\Vert} \mu_x(A)\mu_y(B) & \le 
 \e^{2\varepsilon/5} \e^{11\varepsilon/30}\e^{3\delta_\Gamma r}\nu_{x,y}^T\bigl( {\mathcal C}^+_{1}(x,A)\times {\mathcal C}^+_{1}(y,B)\bigr)\\
 +\  C'\e^{-\delta_\Gamma T}
 &\le  \e^{13\varepsilon/15} \nu_{x,y}^T\bigl( {\mathcal C}^+_{1}(x,A)\times {\mathcal C}^+_{1}(y,B)\bigr)+ C'\e^{-\delta_\Gamma T} ,
\end{align*}
which proves 
\[ \liminf_{T\to\infty} \nu_{x,y}^T\bigl( {\mathcal C}^+_{1}(x,A)\times {\mathcal C}^+_{1}(y,B)\bigr)\ge \e^{-\varepsilon} \mu_x(A)\mu_y(B)/\Vert m_\Gamma\Vert.\]

We finally turn to the lower bound. 
Using again Corollary \ref{K+-measure}  and the non-negativity of the Gromov product (\ref{GromovProd}) 
we estimate
\begin{align*}   
& \hspace*{-0.7cm} \int_{T_0}^{T+3r} \e^{\delta_\Gamma t} \sum_{\gamma\in\Gamma} m \bigl(  K^+\cap g^{-t}\gamma  K^-\bigr) \d t\\
 &\ge \sum_{\gamma\in\Gamma} \int_{{\mathcal L}_{r}(x,\gamma y)\cap (\gamma B\times A)}\d\mu_x(\xi)\d\mu_x(\eta)\e^{2\delta_\Gamma\cdot 0}\\ 
&\hspace*{3cm} \cdot  \int_{-r/2}^{r/2} \Bigl(\int_{T_0}^{T+3r} \mathbbm{1}_{K_r(\gamma y)} \bigl(g^{t+s} v(x;\xi,\eta)\bigr) \e^{\delta_\Gamma t} \d t\Bigr) \d s \\
&\ge r^2\e^{-3\delta_\Gamma r}  \sum_{\begin{smallmatrix} {\scriptstyle \gamma\in\Gamma}\\ {\scriptstyle T_0+3r< d(x,\gamma y)\le T}\end{smallmatrix}}\int_{{\mathcal L}_{r}(x,\gamma y)\cap (\gamma B\times A)}\d\mu_x(\xi)\d\mu_x(\eta)\cdot  \e^{\delta_\Gamma d(x,\gamma y)},
\end{align*}
where we used Lemma~\ref{flowintegrals} (a) in the last step.

By Lemma~\ref{smallcones}, $r\le 1$  and the second estimate in (\ref{coneesti})
we have for all $\gamma\in\Gamma$ with 
$(\gamma y, \gamma^{-1}x)  \in {\mathcal C}^-_{1}(x,A)\times {\mathcal C}^-_{1}(y,B)\subset {\mathcal C}^-_{r}(x,A)\times {\mathcal C}^-_{r}(y,B)$  
\begin{align*}  {\mathcal L}_{r}(x,\gamma y) \cap (\gamma B\times A)\supset \{(\zeta,\xi)\in\rand\times\rand\colon \xi\in 
 {\mathcal O}^-_{r}(x,\gamma y),\ \zeta\in {\mathcal O}_{r}(\xi,x)\},
\end{align*}
hence 
\begin{align*}   
& \hspace*{-0.7cm} \int_{T_0}^{T+3r} \e^{\delta_\Gamma t} \sum_{\gamma\in\Gamma} m \bigl(  K^+\cap g^{-t}\gamma  K^-\bigr) \d t\\
&\ge  r^2 \cdot \e^{-\varepsilon/10}  \hspace*{-14mm} \sum_{\begin{smallmatrix} {\scriptstyle \gamma\in\Gamma}\\ {\scriptstyle T_0+3r<d(x,\gamma y)\le T}\\ {\scriptstyle(\gamma y, \gamma^{-1}x)  \in ({\mathcal C}^-_{1}(x,A)\times {\mathcal C}^-_{1}(y,B))\cap (\widehat V\times \widehat W)}\end{smallmatrix}} \hspace*{-14mm} \int_{{\mathcal O}^-_{r}(x, \gamma y)}\d\mu_x(\xi) \e^{\delta_\Gamma d(x,\gamma y)}\cdot \mu_x\bigl({\mathcal O}_{r}(\xi,x)\bigr).
\end{align*}
Notice that $\gamma y\in {\mathcal C}^-_{1}(x,A)\subset {\mathcal C}^-_{r}(x,A)$ implies ${\mathcal O}^-_{r}(x, \gamma y)\subset {\mathcal O}^+_{r}(x, \gamma y)\subset A\subset\widehat V$ by definition of the small cones. Hence  (\ref{approxmeasures}) shows that for all $\xi\in {\mathcal O}^-_{r}(x, \gamma y)$ we have
%
\begin{align*} \mu_x\bigl({\mathcal O}_{r}(\xi,x)\bigr) &\ge  \e^{-\varepsilon/30}\mu_x\bigl({\mathcal O}_{r}(\xi_0,x)\bigr).
\end{align*}
By $\Gamma$-equivariance and  conformality of $\mu$ we further have
\[ \int_{{\mathcal O}^-_{r}(x, \gamma y)}\d\mu_x(\xi) \e^{\delta_\Gamma d(x,\gamma y)} \ge \mu_y\bigl( {\mathcal O}^-_{r}(\gamma^{-1} x, y)\bigr)\ge  \e^{-\varepsilon/30}\mu_y\bigl( {\mathcal O}_{r}(\eta_0, y)\bigr),\]
where the last inequality follows from $\gamma^{-1}x\in \widehat W$ and 
(\ref{approxmeasures}). Altogether this proves
\begin{align*}
& \hspace*{-0.7cm} \int_{T_0}^{T+3r} \e^{\delta_\Gamma t} \sum_{\gamma\in\Gamma} m \bigl(  K^+\cap g^{-t}\gamma  K^-\bigr) \d t\nonumber\\
&\ge r^2\cdot \e^{-\varepsilon/6}  \hspace*{-10mm} \sum_{\begin{smallmatrix} {\scriptstyle \gamma\in\Gamma}\\ {\scriptstyle T_0+3r<d(x,\gamma y)\le T}\\ {\scriptstyle(\gamma y, \gamma^{-1}x)  \in {\mathcal C}^-_{1}(x,A)\times {\mathcal C}^-_{1}(y,B)}\cap(\widehat V\times \widehat W)\end{smallmatrix}}\mu_x\bigl({\mathcal O}_{r}(\xi_0,x)\bigr) \mu_y\bigl( {\mathcal O}_{r}(\eta_0, y)\bigr).
\end{align*}
Since the  number of elements $\gamma\in\Gamma$ with $d(x,\gamma y)\le T_0+3r$ or with 
\[ (\gamma y,\gamma^{-1}x)\in \left( {\mathcal C}^-_{1}(x,A)\times {\mathcal C}^-_{1}(y,B)\right) \setminus (\widehat V\times \widehat W)\]
is finite thanks to Lemma~\ref{orbitpointsincones} (a), there exists a constant $C>0$ \st 
\begin{align}\label{lowerboundused}
& \hspace*{-0.7cm} \int_{T_0}^{T+3r} \e^{\delta_\Gamma t} \sum_{\gamma\in\Gamma} m \bigl(  K^+\cap g^{-t}\gamma  K^-\bigr) \d t\nonumber\\
&\ge    r^2 \cdot \e^{-\varepsilon/6}  \hspace*{-10mm} \sum_{\begin{smallmatrix} {\scriptstyle \gamma\in\Gamma}\\ {\scriptstyle d(x,\gamma y)\le T}\\ {\scriptstyle(\gamma y, \gamma^{-1}x)  \in {\mathcal C}^-_{1}(x,A)\times {\mathcal C}^-_{1}(y,B)}\end{smallmatrix}}\mu_x\bigl({\mathcal O}_{r}(\xi_0,x)\bigr) \mu_y\bigl( {\mathcal O}_{r}(\eta_0, y)\bigr)  -C\\
&\ge  \e^{-\varepsilon/6} M \frac{\e^{\delta_\Gamma T}}{ \delta_\Gamma}\nu_{x,y}^T\bigl( {\mathcal C}^-_{1}(x,A)\times {\mathcal C}^-_{1}(y,B)\bigr)- C.\nonumber
\end{align}
 Plugging this in the inequality (\ref{Tplus3r}) divided by $M \e^{\delta_\Gamma (T+3r)}\cdot \Vert m_\Gamma\Vert$ we get (with a constant $C'$ independent of $T$)
\begin{align*}
\frac{1- \e^{\delta_\Gamma (-T-3r+T_0)}}{\Vert m_\Gamma\Vert} \mu_x(A)\mu_y(B) & \ge 
 \e^{-8\varepsilon/15} \e^{-\varepsilon/6}\e^{-3\delta_\Gamma r}\nu_{x,y}^T\bigl( {\mathcal C}^-_{1}(x,A)\times {\mathcal C}^-_{1}(y,B)\bigr)\\
 -\  C'\e^{-\delta_\Gamma T}
 &=  \e^{-12\varepsilon/15} \nu_{x,y}^T\bigl( {\mathcal C}^-_{1}(x,A)\times {\mathcal C}^-_{1}(y,B)\bigr)+ C'\e^{-\delta_\Gamma T} ,
\end{align*}
which proves 
\[ \limsup_{T\to\infty} \nu_{x,y}^T\bigl( {\mathcal C}^-_{1}(x,A)\times {\mathcal C}^-_{1}(y,B)\bigr)\le \e^{\varepsilon} \mu_x(A)\mu_y(B)/\Vert m_\Gamma\Vert.\]
\end{proof}

The next Proposition is the second step in the proof of Theorem~\ref{equithm}:
\begin{proposition}\label{secondstep}
Let $\varepsilon>0$ and $x$, $y\in\XX$ arbitrary. Then for all $(\xi_0,\eta_0)\in\rand\times\rand$   there exists $r>0$ and  open neighborhoods $V\subset\rand$ of $\xi_0$,  $W\subset\rand$ of $\eta_0$ \st for all Borel sets $A\subset V$, $B\subset W$
\begin{align*}
\limsup_{T\to\infty} \nu_{x,y}^T\bigl({\mathcal C}^-_{r}(x,A)\times {\mathcal C}^-_{r}(y,B)\bigr) &\le \e^\varepsilon \mu_x(A)\mu_y (B)/\Vert m_\Gamma\Vert,\\
\liminf_{T\to\infty} \nu_{x,y}^T\bigl({\mathcal C}^+_{r}(x,A)\times {\mathcal C}^+_{r}(y,B)\bigr) &\ge \e^{-\varepsilon} \mu_x(A)\mu_y (B)/\Vert m_\Gamma\Vert.
\end{align*}
\end{proposition}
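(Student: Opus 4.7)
The plan is to reduce Proposition~\ref{secondstep} to the already-proved Proposition~\ref{firststep} by replacing $x,y$ with nearby auxiliary points $x_0,y_0$ satisfying the stronger hypotheses required there. Given $\varepsilon>0$, we fix $\rho\in(0,\varepsilon/(8\delta_\Gamma))$ and seek $x_0\in B_\rho(x)$, $y_0\in B_\rho(y)$ with (i) trivial $\Gamma$-stabilizer, and (ii) $x_0\in(\xi_0 v^+)$ and $y_0\in(\eta_0 w^+)$ for some $\Gamma$-recurrent $v,w\in\zero$. Property~(i) is generic because points with non-trivial $\Gamma$-stabilizer form a locally finite union of closed subsets with empty interior. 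For (ii) we extend the geodesic ray from $x$ to $\xi_0$ backward (via geodesic completeness) to a line $\ell$ with $\ell^+=\xi_0$, and then perturb $x$ inside $B_\rho(x)$ so that $\ell^-$ lands in the set of positive endpoints of $\Gamma$-recurrent elements of $\zero$. This target set has full $\mu_+$-measure (and hence is dense in $\Lim$) by Corollary~\ref{useFubini} applied to the full $m_\Gamma$-measure subset $\quotient{\Gamma}{\zero_\Gamma^{\small{\mathrm{rec}}}}$. The freedom to achieve this perturbation comes from the rank-one hypothesis, which prevents the map $x\mapsto\ell^-(x)$ from being locally constant.

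Having selected $x_0,y_0$, we apply Proposition~\ref{firststep} to $(x_0,y_0,\xi_0,\eta_0)$ with error $\varepsilon/2$, obtaining open neighborhoods $V,W\subset\rand$ of $\xi_0,\eta_0$ and the corresponding limsup/liminf estimates for the cones ${\mathcal C}^\pm_1(x_0,A)\times{\mathcal C}^\pm_1(y_0,B)$. We declare these $V,W$ as the neighborhoods promised by Proposition~\ref{secondstep} and set $r:=1+\rho$.

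The transfer from $(x_0,y_0)$-estimates to $(x,y)$-estimates rests on three routine comparisons. Lemma~\ref{changepoint}(b), applied after interchanging the roles of $(x,y)$ and $(x_0,y_0)$, yields $\gamma y\in {\mathcal C}^-_{1+\rho}(x,A)\Rightarrow\gamma y_0\in {\mathcal C}^-_1(x_0,A)$ and the analogue for the second factor, while Lemma~\ref{changepoint}(a) gives the reverse implications for ${\mathcal C}^+$. The triangle inequality $|d(x,\gamma y)-d(x_0,\gamma y_0)|\le 2\rho$ changes the counting radii by at most $2\rho$, contributing a factor $\e^{2\delta_\Gamma\rho}$ through the prefactor $\e^{-\delta_\Gamma T}$. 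Finally, conformality of $\mu$ combined with $|\bs_\eta(x,x_0)|\le\rho$ gives $\e^{-\delta_\Gamma\rho}\mu_x(A)\le\mu_{x_0}(A)\le \e^{\delta_\Gamma\rho}\mu_x(A)$, and likewise for $(y,y_0,B)$. Chaining these ingredients yields
\[ \limsup_{T\to\infty}\nu^T_{x,y}\bigl({\mathcal C}^-_r(x,A)\times{\mathcal C}^-_r(y,B)\bigr)\le \e^{\varepsilon/2+4\delta_\Gamma\rho}\,\frac{\mu_x(A)\mu_y(B)}{\Vert m_\Gamma\Vert}\le \e^\varepsilon\frac{\mu_x(A)\mu_y(B)}{\Vert m_\Gamma\Vert}, \]
together with the parallel liminf estimate.

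The main obstacle is step~(ii) above: producing auxiliary basepoints arbitrarily close to $x,y$ that satisfy the joining condition. This is where the rank-one hypothesis enters essentially, for in a flat or higher-rank setting the family of backward extensions through a small ball around $x$ would share a single negative endpoint and could miss the dense set of recurrent endpoints entirely; here instead Lemma~\ref{joinrankone} together with the prevalence of $\zero_\Gamma^{\small{\mathrm{rec}}}$ provides enough flexibility to land inside it.
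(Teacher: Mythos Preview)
Your proposal has a genuine gap at step~(ii): producing $x_0\in B_\rho(x)$ with the required joining property. You want a point $x_0$ arbitrarily close to $x$ lying on a geodesic line $(\xi_0\,v^+)$ for some $\Gamma$-recurrent $v\in\zero$. But $\xi_0$ is an \emph{arbitrary} boundary point (possibly outside $\Lim$), $\rho$ is arbitrarily small, and nothing guarantees that any such line passes within distance $\rho$ of $x$. The backward extension of $\sigma_{x,\xi_0}$ is not even uniquely defined in a general geodesically complete CAT$(0)$ space, so the ``map $x_0\mapsto\ell^-(x_0)$'' is ill-posed; and even granting a selection, ``not locally constant'' is far weaker than what you need --- namely that the image of $B_\rho(x)$ under this map meets a given full-measure (or merely dense) subset of $\Lim$. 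The rank-one hypothesis and Lemma~\ref{joinrankone} do not furnish this kind of local surjectivity onto boundary neighborhoods.

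The paper avoids this obstacle altogether by \emph{not} requiring $x_0$ close to $x$. It picks any $\Gamma$-recurrent $v\in\zero$, invokes Lemma~\ref{jointoweakrecurrent} to join $\xi_0$ to $v^+$, and takes $x_0$ anywhere on $(\xi_0 v^+)$ with trivial stabilizer; then $r:=1+\max\{d(x,x_0),d(y,y_0)\}$ is allowed to be large. Since the naive triangle inequality now yields an error of order $d(x,x_0)+d(y,y_0)$ which is not small, the paper instead restricts to $\gamma y$ near $\xi_0$ and uses the Busemann approximation $d(x_0,\gamma y)-d(x,\gamma y)\approx\bs_{\xi_0}(x_0,x)$. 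This fixed shift in the exponent then cancels exactly against the conformality factor $\mu_{x_0}(A)\approx \e^{\delta_\Gamma\bs_{\xi_0}(x,x_0)}\mu_x(A)$ (valid for $A$ in a small neighborhood of $\xi_0$). That cancellation is the essential mechanism you attempted to bypass at the cost of the unjustified perturbation step.
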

\begin{proof}
Let $(\xi_0,\eta_0)\in\rand\times\rand$ be arbitrary. Choose $\Gamma$-recurrent geodesics $v$, $w\in\zero$  and $x_0\in (\xi_0 v^+)$, $y_0\in (\eta_0 w^+)$ with trivial stabilizers in $\Gamma$. Let $V_0$, $W_0\subset\rand$ be open neighborhoods of $\xi_0$ and $\eta_0$ \st the statement of Proposition~\ref{firststep} is true for $x_0$, $y_0$ instead of $x$, $y$, $V_0$, $W_0$ instead of $V$, $W$  and $\varepsilon/3$ instead of $\varepsilon$. 

Choose open neighborhoods $\widehat V_0$, $\widehat W_0$ of $\xi_0$, $\eta_0$ \st $\widehat V_0\cap\rand\subset V_0$, $\widehat W_0\cap\rand\subset W_0$   and
\begin{equation}\label{estimatebusinnbhd}
|d(x_0,a)-d(x,a)-\bs_{\xi_0}(x_0,x)|<\frac{\varepsilon}{6\delta_\Gamma},\quad |d(y_0,b)-d(y,b)-\bs_{\eta_0}(y_0,y)|<\frac{\varepsilon}{6\delta_\Gamma}\end{equation}
for all $(a,b)\in \widehat V_0\times \widehat W_0$. Notice that if $a=\xi
\in\rand$ we use the convention that $d(x_0,a)-d(x,a)=\bs_{a}(x_0,x)$ and similar for $b=\eta\in\rand$.

Now let $V$, $W\subset\rand$ be neighborhoods of  $\xi_0$, $\eta_0$ \st for the closures we have
$\overline V\subset \widehat V_0\cap \rand$ and $\overline W\subset \widehat W_0\cap \rand$. We further set 
\[r=1+\max\{ d(x,x_0), d(y,y_0)\},\]
and let $A\subset V$, $B\subset W$ be arbitrary Borel sets. From the choice of $r$ above and Lemma~\ref{changepoint} (b) 
we immediately deduce 
%
that  $(\gamma y,\gamma^{-1}x)\in {\mathcal C}^-_{r}(x,A)\times {\mathcal C}^-_{r}(y,B)$ implies 
\[ (\gamma y_0,\gamma^{-1}x_0)\in {\mathcal C}^-_{1}(x_0,A)\times {\mathcal C}^-_{1}(y_0,B).\]

For $r>0$ we set
\[\widehat V_{-r}:=\{z\in X\colon \overline{B_r(z)}\subset \widehat V_0\}\cup \bigl(\widehat V_0 \cap \rand\bigr).\] 
If $d(x,\gamma y)\le T$ and $(\gamma y,\gamma^{-1}x)\in \widehat V_{-r}\times \widehat W_{0}$,  then $(\gamma y_0,\gamma^{-1}x)\in \widehat V_{0}\times \widehat W_{0}$ and hence
\begin{align*}
d(x_0,\gamma y_0)&\le d(x,\gamma y_0)+\bs_{\xi_0}(x_0,x)+\frac{\varepsilon}{6\delta_\Gamma}=d(y_0,\gamma^{-1}x)+\bs_{\xi_0}(x_0,x)+\frac{\varepsilon}{6\delta_\Gamma}\\
&\le d(y,\gamma^{-1}x)+\bs_{\eta_0}(y_0,y)+ \bs_{\xi_0}(x_0,x)+\frac{\varepsilon}{3\delta_\Gamma}\\
&\le T+ \bs_{\eta_0}(y_0,y)+ \bs_{\xi_0}(x_0,x)+\frac{\varepsilon}{3\delta_\Gamma}.
\end{align*}
So we conclude that for $T\gg 1$ 
\begin{align*}
 \e^{-\delta_\Gamma T}& \#\{\gamma\in\Gamma\colon d(x,\gamma y)\le T,\ (\gamma y,\gamma^{-1} x)  \in \bigl({\mathcal C}^-_{r}(x,A)\times  {\mathcal C}^-_{r}(y,B)\bigr)\cap ( \widehat V_{-r}\times \widehat W_0)\}\\
& \le \e^{\varepsilon/3}\cdot \e^{\delta_\Gamma\bigl( \bs_{\eta_0}(y_0,y)+ \bs_{\xi_0}(x_0,x)\bigr)}\cdot \e^{-\delta_\Gamma(T+ \bs_{\eta_0}(y_0,y)+ \bs_{\xi_0}(x_0,x)+\varepsilon/3\delta_\Gamma)}\cdot\\
&\hspace*{15mm}\#\{\gamma\in\Gamma\colon d(x_0,\gamma y_0)\le T+ \bs_{\eta_0}(y_0,y)+ \bs_{\xi_0}(x_0,x)+\varepsilon/3\delta_\Gamma,\\
&\hspace*{29mm} \ (\gamma y_0,\gamma^{-1} x_0)  \in \bigl({\mathcal C}^-_{1}(x_0,A)\times  {\mathcal C}^-_{1}(y_0,B)\bigr)\cap ( \widehat V_{0}\times \widehat W_0)\}.
\end{align*}
Since the number of elements $\gamma\in\Gamma$ with $(\gamma y,\gamma^{-1}x)\in \bigl({\mathcal C}^-_{r}(x,A)\times {\mathcal C}^-_{r}(y,B)\bigr)\setminus (\widehat V_{-r}\times \widehat W_{0})$ is finite by Lemma~\ref{orbitpointsincones} (a), we conclude that
\begin{align*}
& \hspace*{-1cm} \limsup_{T\to\infty} \nu_{x,y}^T\bigl({\mathcal C}^-_{r}(x,A)\times {\mathcal C}^-_{r}(y,B)\bigr)\\
&\le \e^{\varepsilon/3}\e^{\delta_\Gamma \bigl(\bs_{\xi_0}(x_0,x)+\bs_{\eta_0}(y_0,y)\bigr)} \\
&\qquad \cdot \limsup_{T\to\infty} \nu_{x_0,y_0}^{T+\bs_{\xi_0}(x_0,x)+\bs_{\eta_0}(y_0,y)+\varepsilon/3\delta_\Gamma}\bigl({\mathcal C}^-_{1}(x_0,A)\times {\mathcal C}^-_{1}(y_0,B)\bigr)\\
&\le  \e^{2\varepsilon/3}\e^{\delta_\Gamma \bigl(\bs_{\xi_0}(x_0,x)+\bs_{\eta_0}(y_0,y)\bigr)} \mu_{x_0}(A)\mu_{y_0}(B)/\Vert m_\Gamma\Vert,
\end{align*}
where we used Proposition~\ref{firststep} in the last estimate. 

Now for $\xi\in A\subset \widehat V_0\cap\rand$ and $\eta \in B\subset \widehat W_0\cap \rand$ we get from (\ref{estimatebusinnbhd})
\[\bs_{\xi_0}(x_0,x) < \bs_{\xi}(x_0,x) +\frac{\varepsilon}{6\delta_\Gamma},\quad \bs_{\eta_0}(y_0,y) < \bs_{\eta}(y_0,y) +\frac{\varepsilon}{6\delta_\Gamma},\]
hence 
\begin{align*} \e^{\delta_\Gamma  \bs_{\xi_0}(x_0,x)}\mu_{x_0}(A) &=\int_A \e^{\delta_\Gamma  \bs_{\xi_0}(x_0,x)}\d \mu_{x_0}(\xi)\\
&\le \e^{\varepsilon/6} \int_A \e^{\delta_\Gamma  \bs_{\xi}(x_0,x)}\frac{\d \mu_{x_0}}{\d\mu_x}(\xi)\d\mu_x(\xi)\stackrel{(\ref{conformality})}{=} \e^{\varepsilon/6}\mu_x(A),
\end{align*}
and similarly
\[ \e^{\delta_\Gamma  \bs_{\eta_0}(y_0,y)}\mu_{y_0}(B)\le \e^{\varepsilon/6}\mu_y(B).\]
This finally proves
\[  \limsup_{T\to\infty} \nu_{x,y}^T\bigl({\mathcal C}^-_{r}(x,A)\times {\mathcal C}^-_{r}(y,B)\bigr)\le \e^{\varepsilon}\mu_x(A)\mu_y(B)/\Vert m_\Gamma\Vert.\]
The proof of the inequality for the limit inferior is analogous.
\end{proof}

{\sl Proof of Theorem~\ref{equithm}.}\ 
Let $x$, $y\in\XX$ and $\varepsilon>0$ arbitrary. For $(\xi_0,\eta_0)\in\rand\times \rand$ we fix $r>0$ and open neighborhoods $V$, $W\subset\rand$ of $\xi_0$, $\eta_0$ \st the conclusion of Proposition~\ref{secondstep} holds. Choose open sets $\widehat V$, $\widehat W\subset\ganz$ with $\widehat V\cap\rand=V$ and $\widehat W\cap\rand=W$, and let $\widehat A$, $\widehat B\subset\ganz$ be Borel sets with $ \overline{\widehat A}\subset\widehat V$, $\overline{\widehat B}\subset\widehat W$ and
\begin{equation}\label{zeromeasureboundary} 
(\mu_x\otimes \mu_y)\bigl(\partial(\widehat A\times \widehat B)\bigr)=0.\end{equation}
Let $\alpha>0$ be arbitrary, and choose open sets $A^+, B^+\subset \rand $ and compact sets $A^-,  B^-\subset \rand $ with the properties
\begin{align*}
A^-&\subset \widehat A^\circ\cap\rand\subset  \overline{\widehat A}\cap\rand\subset A^+\subset V,\\
B^-&\subset \widehat B^\circ\cap\rand\subset  \overline{\widehat B}\cap\rand\subset B^+\subset W,\\
&\mu_x(\widehat A^\circ\setminus A^-)<\alpha,\quad \mu_x(A^+\setminus \overline{\widehat A})<\alpha,\\
&\mu_y(\widehat B^\circ\setminus B^-)<\alpha,\quad \mu_y(B^+\setminus \overline{\widehat B})<\alpha .
\end{align*}

Notice that according to Lemma~\ref{orbitpointsincones} (b)  the number of  $\gamma\in\Gamma$ with
\[ (\gamma y,\gamma^{-1}x)\in (\overline{\widehat A}\times\overline{\widehat B})\setminus \left( {\mathcal C}^-_{r}(x,A^+)\times {\mathcal C}^-_{r}(y,B^+)\right)\]
is finite; the same is true for 
the number of  $\gamma\in\Gamma$ with
\[ (\gamma y,\gamma^{-1}x)\in \left( {\mathcal C}^+_{r}(x,A^-)\times {\mathcal C}^+_{r}(y,B^-)\right) \setminus (\widehat A^\circ\times \widehat B^\circ)\]
by Lemma~\ref{orbitpointsincones} (a).
Hence 
\begin{align*}
\Vert m_\Gamma\Vert \cdot \limsup_{T\to\infty} \nu_{x,y}^T\bigl(\widehat A\times\widehat B\bigr) 
&\le \Vert m_\Gamma\Vert \cdot  \limsup_{T\to\infty} \nu_{x,y}^T\bigl({\mathcal C}^-_{r}(x,A^+)\times {\mathcal C}^-_{r}(y,B^+)\bigr),\\
\Vert m_\Gamma\Vert \cdot \liminf_{T\to\infty} \nu_{x,y}^T\bigl(\widehat A\times\widehat B\bigr)
&\ge \Vert m_\Gamma\Vert \cdot  \liminf_{T\to\infty} \nu_{x,y}^T\bigl({\mathcal C}^+_{r}(x,A^-)\times {\mathcal C}^+_{r}(y,B^-)\bigr).
\end{align*}
Proposition~\ref{secondstep} further implies 
\begin{align*}
\Vert m_\Gamma\Vert \cdot \limsup_{T\to\infty} \nu_{x,y}^T\bigl(\widehat A\times\widehat B\bigr) &\le \e^{\varepsilon} \mu_{x}(A^+)\mu_{y}(B^+)\\
&\le \e^{\varepsilon} \mu_{x}(\overline{\widehat A})\mu_{y}(\overline{\widehat B})+\alpha \e^\varepsilon \bigl(\mu_x(\rand)+\mu_y(\rand)\bigr)\\
&\stackrel{(\ref{zeromeasureboundary})}{\le} \e^{\varepsilon} \mu_{x}(\widehat A)\mu_{y}(\widehat B)+\alpha \e^\varepsilon \bigl(\mu_x(\rand)+\mu_y(\rand)\bigr)
\end{align*}
and
\begin{align*}
\Vert m_\Gamma\Vert \cdot \liminf_{T\to\infty} \nu_{x,y}^T\bigl(\widehat A\times\widehat B\bigr) &\ge \e^{-\varepsilon} \mu_{x}(A^-)\mu_{y}(B^-)\\
&\ge \e^{-\varepsilon} \mu_{x}(\widehat A^\circ)\mu_{y}(\widehat B^\circ)-\alpha \e^{-\varepsilon} \bigl(\mu_x(\rand)+\mu_y(\rand)\bigr)\\
&\stackrel{(\ref{zeromeasureboundary})}{\ge} \e^{-\varepsilon} \mu_{x}(\widehat A)\mu_{y}(\widehat B)-\alpha \e^{-\varepsilon }\bigl(\mu_x(\rand)+\mu_y(\rand)\bigr)
\end{align*}
As $\alpha$ was arbitrarily small we get in the limit as $\alpha$ tends to zero
\begin{align*}  \limsup_{T\to\infty} \nu_{x,y}^T\bigl(\widehat A\times\widehat B\bigr) &\le  \e^{\varepsilon} \mu_{x}(\widehat A)\mu_{y}(\widehat B)/\Vert m_\Gamma\Vert \quad\text{and}\\
 \liminf_{T\to\infty} \nu_{x,y}^T\bigl(\widehat A\times\widehat B\bigr) &\ge \e^{-\varepsilon} \mu_{x}(\widehat A)\mu_{y}(\widehat B)\Vert m_\Gamma\Vert .\end{align*}
So for every continuous and positive function $h$ with support in $\widehat V\times \widehat W$ we have
\begin{align*} \frac{\e^{-\varepsilon}}{\Vert m_\Gamma\Vert} \int h (\d \mu_x\otimes \d\mu_y) & \le  \liminf_{T\to\infty} \int h \d\nu_{x,y}^T\\
&\le \limsup_{T\to\infty} \int h \d\nu_{x,y}^T\le \frac{\e^\varepsilon}{\Vert m_\Gamma\Vert} \int h (\d \mu_x\otimes \d\mu_y).\end{align*}
Now the compact set $\rand\times\rand$ can be covered by a finite number of open sets of type $V\times W$ with $V$, $W\subset\rand$ as above, and similarly $\ganz\times\ganz$ by finitely many open sets $\widehat  V\times \widehat W$ with $\widehat V$, $\widehat  W\subset\ganz$ as above. Using a partition of unity subordinate to such a finite cover we see that the inequalities above remain true for every continuous and positive function on $\ganz\times\ganz$. The claim now follows by taking the limit $\varepsilon\to 0$, and passing from positive continuous functions to arbitrary continuous functions via a standard argument.
$\hfill\square$

We conclude this section with the following 
\begin{corollary}
Let $\Gamma<\is(\XX)$ be a discrete rank one group with  non-arithmetic length spectrum, 
$\zero_\Gamma\ne\emptyset$ 
and finite Ricks' Bowen-Margulis measure $m_\Gamma$.

Let $f:\ganz\to\RR$ be a continuous function, and  $x$, $y\in\XX$. Then  
\[ \lim_{T\to\infty}    
\delta_\Gamma  \e^{-\delta_\Gamma T} \sum_{\begin{smallmatrix}{\scriptstyle\gamma\in\Gamma}\\{\scriptstyle d(x,\gamma y )\le T}\end{smallmatrix}} f(\gamma y)=\frac{ \mu_y(\rand)}{\Vert m_\Gamma \Vert} \int_{\rand} f(\xi)\d \mu_x(\xi).\]
\end{corollary}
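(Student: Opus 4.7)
The plan is to derive this statement as an immediate corollary of Theorem~\ref{equithm} by a suitable choice of test function on $\ganz\times\ganz$. Given the continuous function $f:\ganz\to\RR$, I would define $\widetilde f:\ganz\times\ganz\to\RR$ by $\widetilde f(\xi,\eta):=f(\xi)$. Since $\widetilde f$ is the pull-back of $f$ under the projection onto the first factor and $f$ is continuous on $\ganz$, the function $\widetilde f$ is continuous on $\ganz\times\ganz$, so it is an admissible test function in Theorem~\ref{equithm}.

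Applying Theorem~\ref{equithm} to $\widetilde f$ with the given points $x$, $y\in\XX$ then yields
\[
\lim_{T\to\infty}\delta_\Gamma\cdot\e^{-\delta_\Gamma T}\sum_{\begin{smallmatrix}{\scriptstyle\gamma\in\Gamma}\\{\scriptstyle d(x,\gamma y)\le T}\end{smallmatrix}}\widetilde f(\gamma y,\gamma^{-1}x)=\frac{1}{\Vert m_\Gamma\Vert}\int_{\rand\times\rand}\widetilde f(\xi,\eta)\,\d\mu_x(\xi)\,\d\mu_y(\eta).
\]
On the left-hand side one has $\widetilde f(\gamma y,\gamma^{-1}x)=f(\gamma y)$, so the sum is precisely the one appearing in the statement of the corollary. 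On the right-hand side, since $\widetilde f$ depends only on the first variable and $\mu_y$ is a finite Borel measure on $\rand$, Fubini's theorem gives
\[
\int_{\rand\times\rand}\widetilde f(\xi,\eta)\,\d\mu_x(\xi)\,\d\mu_y(\eta)=\Bigl(\int_{\rand}f(\xi)\,\d\mu_x(\xi)\Bigr)\cdot\mu_y(\rand).
\]

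Combining the two identities produces the asserted formula. There is essentially no obstacle to carrying out this argument: once Theorem~B has been established, the corollary is a one-line reduction, and the only point requiring verification, namely the continuity of the extension $\widetilde f$, is immediate.
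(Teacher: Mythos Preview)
Your proposal is correct and is exactly the intended derivation: the paper states this corollary without proof, treating it as an immediate consequence of Theorem~\ref{equithm} obtained by taking a test function depending only on the first variable. There is nothing to add.
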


\section{Asymptotic estimates for the orbit counting function}\label{orbitcounting}

In this section we let $\XX$ be a proper Hadamard space and $\Gamma<\is(\XX)$ a discrete rank one group with $\zero_\Gamma\ne\emptyset$. 
Recall that  the orbit counting function with respect to $x$, $y\in \XX$ is defined by
\[ N_\Gamma:[0,\infty)\to\NN,\quad R\mapsto \#\{\gamma\in\Gamma\colon d(x,\gamma y)\leq R\}.\]

We first state a direct corollary of Theorem~\ref{equithm} (using $f=\mathbbm{1}_{\ganz\times\ganz}$):
\begin{proposition}
Let $\Gamma<\is(\XX)$ be a discrete rank one group with  non-arithmetic length spectrum, $\zero_\Gamma\ne\emptyset$ 
and finite Ricks' Bowen-Margulis measure $m_\Gamma$. Then for any $x$, $y\in\XX$ we have 
\[  \lim_{R\to\infty}  \delta_\Gamma\e^{-\delta_\Gamma R} N_\Gamma(R) =\frac{\mu_x(\rand)\mu_y(\rand)}{ \Vert m_\Gamma \Vert}.\]
\end{proposition}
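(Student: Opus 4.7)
The plan is to derive this proposition as an immediate corollary of Theorem~\ref{equithm}, as suggested by the sentence preceding the statement. The key observation is that since $\XX$ is proper, $\ganz = \XX\cup\rand$ is compact, so $\ganz\times\ganz$ is compact, and therefore the constant function $f\equiv 1$ is a continuous function from $\ganz\times\ganz$ to $\RR$ to which Theorem~\ref{equithm} applies.

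Substituting $f\equiv 1$ into the conclusion of Theorem~\ref{equithm}, the left-hand side sum reduces to
\[ \sum_{\begin{smallmatrix}{\scriptstyle\gamma\in\Gamma}\\{\scriptstyle d(x,\gamma y )\le T}\end{smallmatrix}} f(\gamma y,\gamma^{-1}x) = \#\{\gamma\in\Gamma\colon d(x,\gamma y)\le T\} = N_\Gamma(T),\]
while the integral on the right simplifies to
\[ \int_{\rand\times\rand} 1\,\d\mu_x(\xi)\,\d\mu_y(\eta) = \mu_x(\rand)\cdot\mu_y(\rand).\]
Plugging these two identities into the conclusion of Theorem~\ref{equithm} yields exactly the claimed asymptotic formula, with the limit taken as $T\to\infty$ (renamed $R$ to match the statement).

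There is no real obstacle here, since the entire substance of the proposition is encoded in Theorem~\ref{equithm}, whose proof in turn rests on the mixing result Theorem~\ref{mixthm} and the cone/shadow/corridor estimates of Section~\ref{shadowconecorridor}. The only point deserving a brief mention is that $\mu_x(\rand)$ and $\mu_y(\rand)$ are finite (by the definition of a conformal density, Definition~\ref{confdensity}) and strictly positive (since $\Gamma$ is rank one, hence non-elementary, so $\Lim\ne\emptyset$ and $\supp(\mu_x)=\Lim$ forces $\mu_x(\rand)>0$). Thus the right-hand side of the asymptotic is a well-defined positive number, as expected.
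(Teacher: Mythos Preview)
Your proof is correct and matches the paper's approach exactly: the proposition is stated as a direct corollary of Theorem~\ref{equithm} obtained by taking $f=\mathbbm{1}_{\ganz\times\ganz}$, i.e.\ the constant function $1$. Your additional remarks on finiteness and positivity of $\mu_x(\rand)$ are sound but not strictly needed for the argument.
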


We next deal with the case that the Ricks' Bowen-Margulis measure is not finite:
\begin{theorem}\label{lowgrowth}
Let $\Gamma<\is(\XX)$ be a discrete rank one group with $\zero_\Gamma\ne\emptyset$  and infinite Ricks' Bowen-Margulis measure $m_\Gamma$. If $\Gamma$ is divergent we further require that $\Gamma$ has non-arithmetic length spectrum. 
Then for the orbit counting function with respect to arbitrary points $x$, $y\in\XX$ we have
\[ \lim_{t\to\infty} N_\Gamma(t)\e^{-\delta_\Gamma t}=0.\]
\end{theorem}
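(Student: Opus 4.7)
I would split the argument into two cases according to whether $\Gamma$ is convergent or divergent, as these require quite different techniques.

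\textbf{The convergent case.}  This is a purely Poincar\'e-series argument that makes no use of the hypothesis on the length spectrum. Given $\varepsilon>0$, pick $T_0$ so that the tail $\sum_{d(x,\gamma y)>T_0}\e^{-\delta_\Gamma d(x,\gamma y)}<\varepsilon$. For any $T>T_0$, bounding each summand in the tail below by $\e^{-\delta_\Gamma T}$ on those $\gamma$ with $T_0<d(x,\gamma y)\le T$ gives
\[ \e^{-\delta_\Gamma T}\bigl(N_\Gamma(T)-N_\Gamma(T_0)\bigr)<\varepsilon,\]
so the $\limsup$ is at most $\varepsilon$ and hence $0$ since $\varepsilon$ is arbitrary.

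\textbf{The divergent case with infinite $m_\Gamma$.}  Here the non-arithmetic length spectrum hypothesis allows me to invoke Corollary~\ref{mixcor}. I would reuse essentially verbatim the geometric setup of Proposition~\ref{firststep}: fix $(\xi_0,\eta_0)\in\rand\times\rand$ obtained as endpoints of $\Gamma$-recurrent geodesics through $x,y$ (after passing to auxiliary points $x_0,y_0$ if necessary), select small $r>0$ and open Borel sets $A\subset V$, $B\subset W$ on which the shadow estimates~(\ref{approxmeasures}) hold, and form the corresponding sets $K^{\pm}=K_r^{\pm}(x,A),K_r^{-}(y,B)$. The key difference from Theorem~\ref{equithm} is that Corollary~\ref{mixcor} now yields
\[ \sum_{\gamma\in\Gamma} m(K^+\cap g^{-t}\gamma K^-)\longrightarrow 0\quad\text{as }t\to\infty,\]
since $m_\Gamma$ is infinite. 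Given $\varepsilon>0$, for $t$ large enough this sum is below $\varepsilon$, so the integral $\int_{T_0}^{T+3r}\e^{\delta_\Gamma t}\sum_{\gamma}m(K^+\cap g^{-t}\gamma K^-)\,\d t$ is at most $\frac{\varepsilon}{\delta_\Gamma}\e^{\delta_\Gamma(T+3r)}+O(1)$. Plugging this upper bound into the lower estimate~(\ref{lowerboundused}) of Proposition~\ref{firststep} — which needed only mixing and not the finiteness of $m_\Gamma$ for its derivation — gives
\[ \#\{\gamma\in\Gamma\colon d(x,\gamma y)\le T,\ (\gamma y,\gamma^{-1}x)\in {\mathcal C}^-_1(x,A)\times {\mathcal C}^-_1(y,B)\}\le C\varepsilon\,\e^{\delta_\Gamma T}+O(1), \]
which (letting $\varepsilon\to 0$) shows this localized count is $o(\e^{\delta_\Gamma T})$. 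Proposition~\ref{secondstep}'s point-change argument (via Lemma~\ref{changepoint}) then removes the assumption on the special location of $x,y$ and replaces radius $1$ by any $r>0$.

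\textbf{Globalisation.}  To pass from the cone-localized count to the full $N_\Gamma(T)$, I would cover the compact set $\rand$ by finitely many open neighborhoods $V_1,\dots,V_n\subset\rand$ to which the previous localized estimate applies, choose open extensions $\widehat V_i\subset\ganz$ with $\widehat V_i\cap\rand=V_i$ and $\bigcup_i\widehat V_i\supset\rand$, and use Lemma~\ref{orbitpointsincones}(b) to absorb, up to finitely many exceptions, each orbit point $\gamma y\in\widehat V_i$ into ${\mathcal C}^-_r(x,V_i)$ (and similarly for $\gamma^{-1}x$). Since $\Gamma$-orbits accumulate only at $\rand$, all but finitely many $\gamma\in\Gamma$ land in some ${\mathcal C}^-_r(x,V_i)\times {\mathcal C}^-_r(y,V_j)$, and summing the $n^2$ cone-localized bounds gives $N_\Gamma(T)=o(\e^{\delta_\Gamma T})$.

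\textbf{Expected main obstacle.}  The convergent case is essentially trivial, and the divergent case is morally ``the upper-bound half of Theorem~B with the product measure replaced by zero''. The only delicate point is organizing the cover of $\rand\times\rand$ so that the exceptional finite counts coming from Lemma~\ref{orbitpointsincones} do not interfere with the $o(\e^{\delta_\Gamma T})$ estimate; this is a book-keeping issue rather than a genuine obstacle, as the exceptional sets are all finite and $\delta_\Gamma>0$.
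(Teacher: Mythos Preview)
Your proposal is correct. The divergent-infinite-measure case is essentially identical to the paper's argument: both invoke Corollary~\ref{mixcor} in the infinite-measure form to get $\sum_\gamma m(K^+\cap g^{-t}\gamma K^-)\to 0$, feed this into the lower bound~(\ref{lowerboundused}), and then run the point-change (Lemma~\ref{changepoint}) and globalisation steps exactly as you describe.

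The one genuine difference is your treatment of the convergent case. You give the direct Poincar\'e-series tail bound, which is elementary and self-contained. The paper instead treats convergent and divergent groups uniformly inside the same geometric framework of Lemma~\ref{firststeplem}: for convergent $\Gamma$ it appeals to dissipativity of $(\quotient{\Gamma}{\SX},g_\Gamma,m_\Gamma)$ (from the Hopf--Tsuji--Sullivan dichotomy) to obtain $\sum_\gamma m(K^+\cap g^{-t}\gamma K^-)\to 0$, and then proceeds identically. Your route is shorter and avoids any appeal to the measure-theoretic machinery for convergent groups; the paper's route has the aesthetic advantage of a single unified argument but at the cost of invoking dissipativity where a two-line tail estimate would do.
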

As in the proof of Theorem~\ref{equithm} we define  the measure
\[
\nu_{x,y}^T:=  \delta_\Gamma \e^{-\delta_\Gamma T}\sum_{\begin{smallmatrix}{\scriptstyle\gamma\in\Gamma}\\{\scriptstyle d(x,\gamma y )\le T}\end{smallmatrix}} \mathcal{D}_{\gamma y}\otimes \mathcal{D}_{\gamma^{-1} x};\]
here we only have to show that \[\displaystyle \limsup_{T\to\infty} \nu_{x,y}^T(\ganz\times \ganz )=0.\]

Again, the first step of the proof is provided by the following 
\begin{lemma}\label{firststeplem}
Let $(\xi_0,\eta_0)\in\rand\times\rand$ and $x$, $y\in\XX$ with trivial stabilizer in $\Gamma$ and \st $x\in (\xi_0 v^+)$, $y\in (\eta_0 w^+)$ for some 
$\Gamma$-recurrent elements $v$, $w\in\zero$. 
Then there exist open neighborhoods $V$, $W\subset\rand$ of $\xi_0$,  $\eta_0$ \st for all Borel sets $A\subset V$, $B\subset W$
\begin{align*}
\limsup_{T\to\infty} \nu_{x,y}^T\bigl({\mathcal C}^-_{1}(x,A)\times {\mathcal C}^-_{1}(y,B)\bigr) &= 0.
\end{align*}
\end{lemma}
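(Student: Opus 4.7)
The plan is to follow the proof of Proposition~\ref{firststep} verbatim as far as the geometric manipulations are concerned, but to replace the finite-measure mixing statement (\ref{summixing}) by its infinite-measure analogue. In the setting of the lemma, Corollary~\ref{mixcor} (applicable either because $\Gamma$ is divergent with non-arithmetic length spectrum, so Theorem~\ref{mixthm} applies, or because the weak mixing property is automatic for dissipative systems in the convergent case) supplies the key input
\[
  \lim_{t\to +\infty}\, \sum_{\gamma\in\Gamma} m\bigl(K^+\cap g^{-t}\gamma K^-\bigr) \;=\; 0.
\]

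First I would set $\rho := \min\{d(x,\gamma x),\ d(y,\gamma y) : \gamma\in\Gamma\setminus\{e\}\} > 0$ and choose $r\in(0,\min\{1,\rho/3\})$ satisfying $\mu_x(\widetilde\partial{\mathcal O}_r(\xi_0,x))=\mu_y(\widetilde\partial{\mathcal O}_r(\eta_0,y))=0$, possible by Lemma~\ref{boundaryhasmeasurezero}. Exactly as in the proof of Proposition~\ref{firststep}, the hypothesis that $x\in(\xi_0 v^+)$ and $y\in(\eta_0 w^+)$ for $\Gamma$-recurrent elements $v,w\in\zero$ combined with Lemma~\ref{joinrankone} and $\supp(\mu_x)=\supp(\mu_y)=\Lim$ gives $M := r^2\mu_x({\mathcal O}_r(\xi_0,x))\mu_y({\mathcal O}_r(\eta_0,y))>0$. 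I then pick open neighborhoods $\widehat V,\widehat W\subset\ganz$ of $\xi_0,\eta_0$ and $V,W\subset\rand$ with $\overline V\subset\widehat V\cap\rand$, $\overline W\subset\widehat W\cap\rand$, chosen so that the shadow approximation (\ref{approxmeasures}) holds with some fixed multiplicative constant depending only on $r$. Let $A\subset V$, $B\subset W$ be arbitrary Borel sets.

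The purely geometric chain of inequalities that produced (\ref{lowerboundused}) in the proof of Proposition~\ref{firststep} does not use the finiteness of $m_\Gamma$ and carries over unchanged, yielding constants $\kappa=\kappa(r)>0$ and $C=C(r,A,B)\ge 0$ independent of $T$ such that
\[
\int_{T_0}^{T+3r} \e^{\delta_\Gamma t}\sum_{\gamma\in\Gamma} m\bigl(K^+\cap g^{-t}\gamma K^-\bigr)\,\d t \;\ge\; \kappa M\,\frac{\e^{\delta_\Gamma T}}{\delta_\Gamma}\,\nu_{x,y}^T\bigl({\mathcal C}^-_1(x,A)\times{\mathcal C}^-_1(y,B)\bigr) - C.
\]
Given any $\epsilon>0$, the displayed limit above supplies $T_0=T_0(\epsilon)>6r$ such that $\sum_\gamma m(K^+\cap g^{-t}\gamma K^-)\le\epsilon$ for every $t\ge T_0$, whence
\[
\int_{T_0}^{T+3r}\e^{\delta_\Gamma t}\sum_{\gamma\in\Gamma} m\bigl(K^+\cap g^{-t}\gamma K^-\bigr)\,\d t \;\le\; \epsilon\,\frac{\e^{\delta_\Gamma(T+3r)}}{\delta_\Gamma}.
\]
Combining the two estimates, dividing by $\e^{\delta_\Gamma T}/\delta_\Gamma$, and letting $T\to\infty$ gives
\[
\limsup_{T\to\infty}\nu_{x,y}^T\bigl({\mathcal C}^-_1(x,A)\times{\mathcal C}^-_1(y,B)\bigr) \;\le\; \frac{\epsilon\,\e^{3\delta_\Gamma r}}{\kappa M}.
\]
Since $r$, $\kappa$ and $M$ are fixed while $\epsilon>0$ is arbitrary, the limsup equals zero.

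The main obstacle is a bookkeeping one: the proof of Proposition~\ref{firststep} intertwines the choice of $r$ with an auxiliary parameter $\varepsilon$ (used both in the mixing approximation and in the boundary-measure estimates), and one must reorganize the argument so that $r$ is fixed first from purely geometric data, while the small parameter $\epsilon$ coming from infinite-measure mixing is brought in only afterwards. Once the dependencies are separated, the estimate above becomes arbitrarily small and the conclusion follows.
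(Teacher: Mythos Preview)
Your proposal is correct and follows the same strategy as the paper: reuse the geometric lower bound (\ref{lowerboundused}) from the proof of Proposition~\ref{firststep}, replace the finite-measure mixing estimate by the infinite-measure statement $\sum_{\gamma} m(K^+\cap g^{-t}\gamma K^-)\to 0$ from Corollary~\ref{mixcor}, integrate, divide by $\e^{\delta_\Gamma T}$, and let first $T\to\infty$ and then the small parameter tend to zero. Your observation about decoupling the choice of $r$ (and hence of $V,W$) from the mixing parameter $\epsilon$ is well taken: the paper fixes $r$ depending on the auxiliary $\varepsilon$, so that literally its neighborhoods $V,W$ vary with $\varepsilon$, whereas the lemma asks for a single pair $V,W$; your reorganization---fix $r$ and the neighborhoods once using only Lemma~\ref{boundaryhasmeasurezero} and Corollary~\ref{measureofshadowsisclose}, then let $\epsilon\to 0$---is the clean way to present the argument and removes this minor ambiguity.
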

\begin{proof}
%
Let $\varepsilon>0$ arbitrary and set $\rho:= \min\{ d(x,\gamma x), d(y,\gamma y)\colon\gamma\in\Gamma\}$. 

As in the proof of Proposition~\ref{firststep} we fix $r\in (0, \min\{1, \rho/3, \varepsilon/(30\delta_\Gamma)\})$ \st 
\[ \mu_x\bigl(\widetilde\partial\mathcal{O}_r(\xi_0, x)\bigr)=0=\mu_y\bigl(\widetilde\partial\mathcal{O}_r(\eta_0, y)\bigr)\]
and choose  open neighborhoods $\widehat V$, $\widehat W\subset\ganz$   of $\xi_0$, $\eta_0$ \st if $(a, b)\in \widehat V\times \widehat W$, then $a$ can be joined to $v^+$,  $b$ can be joined to $w^+$ by a rank one geodesic 
and (\ref{approxmeasures}) holds.
Let $V\subset \widehat V\cap\rand$, $W\subset \widehat W\cap\rand$ be open neighborhoods of $\xi_0$, $\eta_0$, and $A\subset V$, $B\subset W$ arbitrary Borel sets;  
denote $K^+=K_r^+(x,A)$, $K^-=K_r^-(y,B)$, and $M=r^2 \mu_x\bigl({\mathcal O}_r(\xi_0,x)\bigr) \mu_y\bigl({\mathcal O}_r(\eta_0,y)\bigr)>0$. Then by mixing (or dissipativity in the case of a convergent group $\Gamma$) there exists $T_0\gg 1$ \st 
\[ \sum_{\gamma\in\Gamma} m(K^+\cap g^{-t}\gamma K^-)<M \varepsilon\cdot \e^{-\varepsilon/3}\]
 for all $t\ge T_0$, which implies 
\[ \bigl( \e^{\delta_\Gamma (T+3r)}- \e^{\delta_\Gamma T_0}\bigr)M \varepsilon\cdot \e^{-\varepsilon/3} > \delta_\Gamma \int_{T_0}^{T+ 3r} \e^{\delta_\Gamma t} \sum_{\gamma\in\Gamma} m(K^+\cap g^{-t}\gamma K^-)\d t.\]
We now use (\ref{lowerboundused}) to get
\begin{align*}   
& \hspace*{-0.7cm} \delta_\Gamma \int_{T_0}^{T+3r} \e^{\delta_\Gamma t} \sum_{\gamma\in\Gamma} m \bigl(  K^+\cap g^{-t}\gamma  K^-\bigr) \d t\\
 &\ge \e^{-\varepsilon/6} M \e^{\delta_\Gamma T} \nu_{x,y}^T\bigl( {\mathcal C}^-_{1}(x,A)\times {\mathcal C}^-_{1}(y,B)\bigr)- C
\end{align*}
with a constant $C$ independent of $T$. 
Dividing by $M \e^{\delta_\Gamma (T+3r)}$ then yields 
\begin{align*}
\bigl(1- \e^{\delta_\Gamma (-T-3r+T_0)}\bigr)  \varepsilon\cdot \e^{-\varepsilon/3} & >
 \e^{-\varepsilon/6}\e^{-3\delta_\Gamma r}\nu_{x,y}^T\bigl( {\mathcal C}^-_{1}(x,A)\times {\mathcal C}^-_{1}(y,B)\bigr)
 -  C'\e^{-\delta_\Gamma T}\\
 &=  \e^{-4\varepsilon/15} \nu_{x,y}^T\bigl( {\mathcal C}^-_{1}(x,A)\times {\mathcal C}^-_{1}(y,B)\bigr)+ C'\e^{-\delta_\Gamma T} ,
\end{align*}
where $C'$ is again  a constant  independent of $T$. We conclude 
\[ \limsup_{T\to\infty} \nu_{x,y}^T\bigl( {\mathcal C}^-_{1}(x,A)\times {\mathcal C}^-_{1}(y,B)\bigr)<\varepsilon  ,\]
and the claim follows from the fact that $\varepsilon>0$ was chosen arbitrarily small.
 \end{proof}
%


The next statement shows that in fact we can omit the conditions on $x$ and $y$ 
in Lemma~\ref{firststeplem}.
\begin{lemma}\label{secondsteplem}
Let $x$, $y\in\XX$ arbitrary. Then for all $(\xi_0,\eta_0)\in\rand\times\rand$   there exists $r>0$ and  open neighborhoods $V\subset\rand$ of $\xi_0$,  $W\subset\rand$ of $\eta_0$ \st for all Borel sets $A\subset V$, $B\subset W$
\begin{align*}
\limsup_{T\to\infty} \nu_{x,y}^T\bigl({\mathcal C}^-_{r}(x,A)\times {\mathcal C}^-_{r}(y,B)\bigr) &=0. 
\end{align*}
\end{lemma}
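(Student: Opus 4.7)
The plan is to mimic the argument of Proposition~\ref{secondstep} almost verbatim, exploiting the fact that here the target value is $0$, so we only need an upper bound and can afford to be wasteful with multiplicative constants. Concretely, we will transfer the vanishing statement from Lemma~\ref{firststeplem} applied at carefully chosen auxiliary points $x_0,y_0$ to the original points $x,y$ via Lemma~\ref{changepoint} and a Busemann-function estimate.

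For a given $(\xi_0,\eta_0)\in\rand\times\rand$, choose $\Gamma$-recurrent geodesics $v,w\in\zero$ (which exist because $\zero_\Gamma\ne\emptyset$, as used repeatedly in the paper) and pick $x_0\in(\xi_0 v^+)$, $y_0\in(\eta_0 w^+)$ with trivial stabilizers in $\Gamma$; such choices exist by sliding along the geodesics and using discreteness. Lemma~\ref{firststeplem} then provides open neighborhoods $V_0\ni\xi_0$, $W_0\ni\eta_0$ with
\[\limsup_{T\to\infty}\nu_{x_0,y_0}^{T}\bigl({\mathcal C}^-_{1}(x_0,A_0)\times {\mathcal C}^-_{1}(y_0,B_0)\bigr)=0\]
for every Borel $A_0\subset V_0$, $B_0\subset W_0$.

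Next, shrink to open neighborhoods $V,W$ with $\overline V\subset V_0$, $\overline W\subset W_0$ and also satisfying
\[|d(x_0,a)-d(x,a)-\bs_{\xi_0}(x_0,x)|<1,\qquad |d(y_0,b)-d(y,b)-\bs_{\eta_0}(y_0,y)|<1\]
for all $a\in\overline V$, $b\in\overline W$ (possible by continuity of the Busemann cocycle, cf.~(\ref{estimatebusinnbhd})). Set $r:=1+\max\{d(x,x_0),d(y,y_0)\}$ and let $A\subset V$, $B\subset W$ be arbitrary Borel sets. By Lemma~\ref{changepoint}(b), any $\gamma\in\Gamma$ with $(\gamma y,\gamma^{-1}x)\in {\mathcal C}^-_{r}(x,A)\times {\mathcal C}^-_{r}(y,B)$ satisfies $(\gamma y_0,\gamma^{-1}x_0)\in {\mathcal C}^-_{1}(x_0,A)\times {\mathcal C}^-_{1}(y_0,B)$; by Lemma~\ref{orbitpointsincones}(a), for all but finitely many such $\gamma$ we moreover have $(\gamma y,\gamma^{-1}x)\in\widehat V\times\widehat W$ for pre-chosen $\widehat V,\widehat W\subset\ganz$ with $\widehat V\cap\rand=V$, $\widehat W\cap\rand=W$, and then the Busemann estimate yields a uniform constant $C=C(x,y,x_0,y_0)>0$ with
\[d(x_0,\gamma y_0)\le d(x,\gamma y)+C.\]

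Combining these observations gives, for all sufficiently large $T$,
\[\nu_{x,y}^{T}\bigl({\mathcal C}^-_{r}(x,A)\times {\mathcal C}^-_{r}(y,B)\bigr)\le \e^{\delta_\Gamma C}\cdot \nu_{x_0,y_0}^{T+C}\bigl({\mathcal C}^-_{1}(x_0,A)\times {\mathcal C}^-_{1}(y_0,B)\bigr)+ C'\e^{-\delta_\Gamma T},\]
where the last term absorbs the finitely many excluded $\gamma$. Taking $\limsup_{T\to\infty}$ and invoking Lemma~\ref{firststeplem} at $x_0,y_0$ makes the right-hand side vanish, which is the claim. The main obstacle is merely this bookkeeping; no new ideas beyond the argument of Proposition~\ref{secondstep} are needed, since we are spared the companion lower bound and any matching of constants.
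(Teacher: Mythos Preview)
Your proposal is correct and follows the same overall strategy as the paper: transfer the vanishing from $(x_0,y_0)$ to $(x,y)$ via Lemma~\ref{changepoint}(b) together with a distance comparison $d(x_0,\gamma y_0)\le d(x,\gamma y)+C$. However, you work harder than necessary. Since the target value is $0$ (as you yourself note), any multiplicative constant is acceptable, and the paper dispenses with the Busemann estimate~(\ref{estimatebusinnbhd}) entirely in favor of the plain triangle inequality
\[d(x_0,\gamma y_0)\le d(x_0,x)+d(x,\gamma y)+d(y,y_0)\le T+d(x_0,x)+d(y,y_0),\]
which holds for \emph{every} $\gamma$ without restriction. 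Consequently no appeal to Lemma~\ref{orbitpointsincones}(a), no shrinking of neighborhoods, and no ambient $\widehat V,\widehat W\subset\ganz$ are needed; the paper simply takes $V$, $W$ to be the neighborhoods supplied by Lemma~\ref{firststeplem}. The Busemann-cocycle detour is what Proposition~\ref{secondstep} genuinely requires (to match upper and lower bounds up to $\e^{\pm\varepsilon}$), but here it is dead weight.

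As a minor aside, your Busemann step as written is slightly off: to bound $d(x_0,\gamma y_0)$ you need $\gamma y_0\in\widehat V$, not just $\gamma y\in\widehat V$, which would force you to introduce the $\widehat V_{-r}$ device from Proposition~\ref{secondstep}. With the triangle inequality this wrinkle evaporates.
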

\begin{proof}
Let $(\xi_0,\eta_0)\in\rand\times\rand$ be arbitrary. Choose $\Gamma$-recurrent geodesics $v$, $w\in\zero$  and $x_0\in (\xi_0 v^+)$, $y_0\in (\eta_0 w^+)$ with trivial stabilizers in $\Gamma$. Let $V$, $W\subset\rand$ be open neighborhoods of $\xi_0$ and $\eta_0$ \st the statement of Lemma~\ref{firststeplem} holds for $x_0$, $y_0$ instead of $x$, $y$. 
Set \[r=1+\max\{ d(x,x_0), d(y,y_0)\}\]
and let $A\subset V$, $B\subset W$ be arbitrary Borel sets. From the choice of $r$ above and Lemma~\ref{changepoint} (b)
we know that  $(\gamma y,\gamma^{-1}x)\in {\mathcal C}^-_{r}(x,A)\times {\mathcal C}^-_{r}(y,B)$ implies 
\[ (\gamma y_0,\gamma^{-1}x_0)\in {\mathcal C}^-_{1}(x_0,A)\times {\mathcal C}^-_{1}(y_0,B).\]
%
If $d(x,\gamma y)\le T$, 
then obviously
\begin{align*}
d(x_0,\gamma y_0)&\le d(x_0, x)+ d(x,\gamma y)+d(y, y_0)\le T+d(x_0, x)+d(y, y_0),
\end{align*}
hence for $T\gg 1$ 
\begin{align*}
 \e^{-\delta_\Gamma T}& \#\{\gamma\in\Gamma\colon d(x,\gamma y)\le T,\ (\gamma y,\gamma^{-1} x)  \in \bigl({\mathcal C}^-_{r}(x,A)\times  {\mathcal C}^-_{r}(y,B)\bigr)
 \}\\
& \le \e^{\delta_\Gamma\bigl(d(x_0, x)+d(y, y_0)\bigr) }\cdot  \e^{-\delta_\Gamma\bigl(T+ d(x_0, x)+d(y, y_0)\bigr)}\cdot\\
&\hspace*{15mm}\#\{\gamma\in\Gamma\colon d(x_0,\gamma y_0)\le T+d(x_0, x)+d(y, y_0),\\
&\hspace*{29mm} \ (\gamma y_0,\gamma^{-1} x_0)  \in \bigl({\mathcal C}^-_{1}(x_0,A)\times  {\mathcal C}^-_{1}(y_0,B)\bigr)
\}.
\end{align*}
We conclude that
\begin{align*}
&  \limsup_{T\to\infty} \nu_{x,y}^T\bigl({\mathcal C}^-_{r}(x,A)\times {\mathcal C}^-_{r}(y,B)\bigr)\\
&\hspace*{1cm} \le
 \e^{\delta_\Gamma \left(d(x_0, x)+d(y, y_0) \right) } 
\limsup_{T\to\infty} \nu_{x_0,y_0}^{T+
d(x_0, x)+d(y, y_0)}\bigl({\mathcal C}^-_{1}(x_0,A)\times {\mathcal C}^-_{1}(y_0,B)\bigr)=0,\end{align*}
where we used Lemma~\ref{firststeplem} in the last estimate. 
\end{proof}

{\sl Proof of Theorem~\ref{lowgrowth}.}\ 
Let $x$, $y\in\XX$ and $\varepsilon>0$ arbitrary. For $(\xi_0,\eta_0)\in\rand\times \rand$ we fix $r>0$ and open neighborhoods $V$, $W\subset\rand$ of $\xi_0$, $\eta_0$ \st the conclusion of Lemma~\ref{secondsteplem} holds. Choose open sets $\widehat V$, $\widehat W\subset\ganz$ with $\widehat V\cap\rand=V$ and $\widehat W\cap\rand=W$, and let $\widehat A$, $\widehat B\subset\ganz$ be Borel sets with
\[ \overline{\widehat A}\subset\widehat V \quad\text{and }\quad \overline{\widehat B}\subset\widehat W. \]
Choose open sets $A $, $B \subset \rand $ 
with the properties
\begin{align*}
  \overline{\widehat A}\cap\rand\subset A  &\subset V\quad\text{and }\quad 
 \overline{\widehat B}\cap\rand\subset B \subset W;
\end{align*}
from Lemma~\ref{orbitpointsincones} (b) we know that the number of  $\gamma\in\Gamma$ with
\[ (\gamma y,\gamma^{-1}x)\in (\overline{\widehat A}\times\overline{\widehat B})\setminus \left( {\mathcal C}^-_{r}(x,A )\times {\mathcal C}^-_{r}(y,B )\right)\]
is finite. 
Hence
\begin{align*}
 \limsup_{T\to\infty} \nu_{x,y}^T\bigl(\widehat A\times\widehat B\bigr) 
&\le   \limsup_{T\to\infty} \nu_{x,y}^T\bigl({\mathcal C}^-_{r}(x,A )\times {\mathcal C}^-_{r}(y,B )\bigr)=0,
\end{align*}
which implies that for every continuous and positive function with support in $\widehat V\times \widehat W$ we have
\[  \limsup_{T\to\infty} \int h \d\nu_{x,y}^T=0.\]
Now the compact set $\rand\times\rand$ can be covered by a finite number of open sets of type $V\times W$ with $V$, $W\subset\rand$ as above, and similarly $\ganz\times\ganz$ by finitely many open sets $\widehat  V\times \widehat W$ with $\widehat V$, $\widehat  W\subset\ganz$ as above. Using a partition of unity subordinate to such a finite cover we see that the statement above remains true for every continuous and positive function on $\ganz\times\ganz$. $\hfill\square$

\section*{Acknowledgements}
The author is extremely grateful to the anonymous referee for his thorough and critical reading of the article. She would like to thank him for pointing out several inaccuracies and mistakes in previous versions of the paper, and in particular for providing the example and a suggestion how to fix a former error in the proof of Lemma 4.3.  
She is also very greatful for his many valuable suggestions to improve the exposition. 


\def\cprime{$'$}
\providecommand{\href}[2]{#2}
\providecommand{\arxiv}[1]{\href{http://arxiv.org/abs/#1}{arXiv:#1}}
\providecommand{\url}[1]{\texttt{#1}}
\providecommand{\urlprefix}{URL }

\end{document}